\begin{document}

\title{A user's guide to the local arithmetic of hyperelliptic curves}
\author[B\MakeLowercase{est}, B\MakeLowercase{etts}, B\MakeLowercase{isatt}, B\MakeLowercase{ommel}, D\MakeLowercase{okchitser}, F\MakeLowercase{araggi}, K\MakeLowercase{unzweiler}, M\MakeLowercase{aistret}, M\MakeLowercase{organ}, M\MakeLowercase{uselli}, N\MakeLowercase{owell}]{Alex J. Best, L.~Alexander Betts, Matthew Bisatt, Raymond van Bommel, Vladimir Dokchitser, Omri Faraggi, Sabrina Kunzweiler, C\'eline Maistret, Adam Morgan, Simone Muselli, Sarah Nowell}

\address{Mathematics and Statistics Department, Boston University, Boston, MA 02215, USA}
\email{alex.j.best@gmail.com} 

\address{Mathematics Department, Harvard University, Cambridge, MA 02138, USA}
\email{abetts@math.harvard.edu}

\address{Department of Mathematics, MIT, Cambridge, MA 02139, USA} 
\email{bommel@mit.edu} 

\address{School of Mathematics, University of Bristol, Bristol, BS8 1UG, UK} 
\email{matthew.bisatt@bristol.ac.uk}
\email{celine.maistret@bristol.ac.uk} 
\email{simone.muselli@bristol.ac.uk}

\address{Department of Mathematics, University College London, London WC1H 0AY, UK}
\email{v.dokchitser@ucl.ac.uk}
\email{omri.faraggi.17@ucl.ac.uk}
\email{sarah.nowell.17@ucl.ac.uk}

\address{Max-Planck-Institut f\"ur Mathematik, 53111 Bonn, Germany}
 \email{am516@mpim-bonn.mpg.de}


\address{Institut f\"ur Algebra und Zahlentheorie, Universit\"at Ulm, 89081 Ulm, Germany}
\email{sabrina.kunzweiler@uni-ulm.de}


 


\subjclass[2010]{11G20 (11G10, 14D10, 14G20, 14H45, 14Q05)}
%

\begin{abstract}
A new approach has been recently developed to study the arithmetic of hyperelliptic curves $y^2=f(x)$ over local fields of odd residue characteristic 
via combinatorial data associated to the roots of~$f$. Since its introduction, numerous papers have used this machinery of ``cluster pictures'' to compute a plethora of 
arithmetic invariants associated to these curves. The purpose of this user's guide is to summarise and centralise all of these 
results in a self-contained fashion, complemented by an abundance of examples.
\end{abstract}

\maketitle

\setcounter{tocdepth}{1}

\tableofcontents
\newpage

\section{Introduction}\label{se:intro}
In this paper, we provide a summary of a recently developed approach to understanding the local arithmetic of hyperelliptic curves. This approach revolves around the theory of ``clusters'', and enables one to read off many local arithmetic invariants of hyperelliptic curves from explicit equations $y^2=f(x)$. The paper is meant to serve as a \emph{user's} guide: our aim has been to make it accessible to mathematicians interested in applications outside of local arithmetic geometry, or who may wish to compute local invariants without having to decipher the theoretical background.

Throughout this article, $K$ will be a local field of odd residue characteristic $p$ and $C/K$ a hyperelliptic curve given by 
$$
y^2 = f(x) = c \prod_{r \in \cR}(x-r),
$$
where $f \in K[x]$ is separable, $\deg(f) = 2g+1$ or $2g+2$ and $g\ge2$.
\subsection{How to use this guide.}

The article is structured as follows. We begin in Section \ref{se:notation} 
by declaring some general notation which will be used throughout, and proceed to give some background theory on cluster pictures and BY trees in Sections \ref{se:clusters} and \ref{se:BYtree} respectively. Cluster pictures will be critical background for all sections of the article; BY trees will be used in Sections \ref{se:tama}, \ref{sec:minimaldiscriminant}, \ref{se:iso} and the Appendix. 

From there on, each section will be self-contained and independent of the other sections.
This will allow a reader who is concerned with just one topic (Galois representations, say) to be able to learn everything they need by reading just the background theory in Sections \ref{se:clusters} and \ref{se:BYtree} and the relevant section (in our example, Section~\ref{se:GalRep}). 

From Section \ref{se:redtype} 
onwards, each section will consist of two pages: the first stating the relevant theorems, and the second providing examples illustrating the theorems. None of the theorems are original (apart from Theorem \ref{thm:discriminant_by}, 
 whose proof is given in the Appendix) and we give no proofs; each section has references at the end where the interested reader can find proofs and more general statements of the theorems.

\subsection{Related work}

The key references for the present work are \cite{m2d2, Betts, BisattRN, hyble, FN, Kunzweiler, Muselli}. We have made a blanket assumption that $K$ is a local field; this is often unnecessarily restrictive, and many results hold for complete discretely valued fields. The reference \cite{m2d2} also discusses a number of topics that we have omitted, in particular how to use clusters to check whether a curve is deficient, how one may perturb $f(x)$ without changing the standard local invariants, and how to classify semistable hyperelliptic curves in a given genus.

As many of our examples will illustrate, the method of cluster pictures is very convenient for computations. However, it can also be used for more theoretical purposes: for instance, one can work explicitly with families of hyperelliptic curves for which the genus becomes arbitrarily large (see e.g. \cite{AD, Co}), or prove general results for curves of a given genus by a complete case-by-case analysis of cluster pictures (see e.g. \cite{asparity}). 

We would like to mention some alternative techniques that have been recently developed for investigating similar topics. 
In \cite{Rut, BW, OW, OS, KW}, the authors determine different kinds of models, the conductor exponent, the local $L$-factor, compare the Artin conductor to the minimal discriminant and compute a basis of the integral differentials.  
In arbitrary residue characteristic (including $2$), but under some technical assumptions, \cite{D, Muselli, DD} determine the minimal regular model with normal crossings, a basis of integral differentials, reduction types, conductor and action of the inertia group on the $\ell$-adic representation. 

\subsection{Implementation}
We have implemented many of the methods described in this guide as a package using the SageMath computer algebra system \cite{sage}. The package is available online at \cite{package}.
This package includes implementations of cluster pictures and BY trees as abstract objects, which it can also plot. Given a hyperelliptic curve, the implementation determines its associated cluster picture and BY tree. It also determines the Tamagawa number, root number, reduction type, minimal discriminant and dual graph of the minimal regular model, as described in this article.

We have also computed cluster pictures for all elliptic curves over $\Q$ and number fields, and all genus 2 curves present in the L-Functions and Modular Forms Database \cite{lmfdb}. The latter is incorporated in the LMFDB homepages of curves.

\subsection{Acknowledgements}
We wish to thank Edgar Costa for technical assistance and ICERM for hosting the virtual workshop `Arithmetic Geometry, Number Theory, and Computation' where this work was done. We also wish to warmly thank the anonymous referee for very carefully reading the manuscript and for their numerous constructive comments.   

Alex J. Best, C\'eline Maistret and Raymond van Bommel were supported by the Simons Collaboration on Arithmetic Geometry, Number Theory, and Computation (Simons Foundation grant numbers 550023, 550023 and 550033, respectively).
L. Alexander Betts and Adam Morgan were supported by the Max Planck Institute in Bonn.
Matthew Bisatt was supported by an EPSRC Doctoral Prize fellowship.
Vladimir Dokchitser was supported by a Royal Society University Research Fellowship.
Omri Faraggi and Sarah Nowell were supported by the Engineering and Physical Sciences Research Council [EP/L015234/1], the EPSRC Centre for Doctoral Training in Geometry and Number Theory (The London School of Geometry and Number Theory), University College London.

\newpage
\section{Notation}\label{se:notation}
Here we set out the notation that will be used throughout the paper. 

Formally by a hyperelliptic curve $C$ we mean the smooth projective curve associated to $y^2 = f(x)$, equivalently the glueing of the pair of affine patches 
$$
y^2 = f(x) \quad \text{ and } \quad v^2 = t^{2g+2}f\Big(\frac{1}{t}\Big)
$$ 
along the maps $x = \frac{1}{t}$ and $y = \frac{v}{t^{g+1}}$, where $f\in K[x]$ is separable, and $\deg(f) \ge 5$. 
We will not consider double covers of general conics. 

We fix the following notation associated to fields and hyperelliptic curves.
\begin{table}[H]
\begin{tabular}{ll}
$K$      & local field of odd residue characteristic $p$\cr
$\cO_K$ & ring of integers of $K$\cr
$k$      &  residue field of $K$\cr
$\pi$ & uniformiser of $K$ \cr
$v$     & normalised valuation with respect to $K$ so that $v(\pi)=1$ \cr
$\Kbar$ & algebraic closure of $K$ \cr
$\Ks$ &  separable closure of $K$ inside $\Kbar$\cr
$\Knr$      & maximal unramified extension of $K$ inside $\Ks$ \cr
$\bar{k}$ & algebraic closure of $k$ and residue field of $\Knr$ \cr
$G_K$ & the absolute Galois group $\Gal(\Ks/K)$ \cr
$I_K$ & inertia subgroup of $G_K$ \cr
$\Frob$ &a choice of (arithmetic) Frobenius element in $G_K$\cr
$\bar{x}$ or $x \mmod \mathfrak{m}$ & image in the residue field $\bar{k}$ for $x \in \bar{K}$ with $v(x) \ge 0$\cr
&\cr
%
$C$ & hyperelliptic curve given by $y^2 = f(x)$ \cr
$c$ & leading coefficient of $f(x)$ \cr
$\cR$ & set of roots of $f(x)$ in $\Ks$ \cr
$g$ & genus of $C$ \cr
$\Cmin$ & minimal regular model of $C/\cO_{K^{\text{nr}}}$ \cr
$\Cmink$ & special fibre of $\Cmin$ \cr
Jac $C$ & Jacobian of $C$ \cr
\phantom{$\bar{x}$ or $x \mmod \mathfrak{m}$ }& \phantom{image in the residue field for $x \in \bar{K}$ with $v(x) \ge 0$}\cr
\end{tabular}
\end{table}
We will say $C$ is semistable if $C$ has semistable reduction. Similarly $C$ is tame if $C$ acquires semistable reduction over a tame extension of $K$. If $p>2g+1$, $C$ is always tame, see Remark \ref{tame remark}.

\newpage
\section{Clusters}\label{se:clusters}

\begin{definition}[Clusters and cluster pictures]
\label{def:cluster}
A {\em cluster} is a non-empty subset $\c\subseteq\cR$ of the form $\c = D \cap \cR$ for some disc $D=\{x\!\in\! \Kbar \mid v(x-z)\!\geq\! d\}$ for some $z\in \Kbar$ and $d\in \Q$. 

For a cluster $\s$ with $|\s|>1$, its {\em depth} $d_\s$ is the maximal $d$ for which $\s$ is cut out by such a disc, that is $d_\s\! =\! \min_{r,r' \in \mathfrak{s}} v(r\!-\!r')$. If moreover $\s\neq \cR$, then its {\em relative depth} is $\delta_\s\! =\! d_\s\! -\!d_{P(\s)}$, where $P(\s)$ is the smallest cluster with $\s\subsetneq P(\s)$ (the \emph{parent} cluster).

We refer to this data as the {\em cluster picture} of $C$.
\end{definition}

\begin{remark}\label{CLrem1}
The Galois group acts on clusters via its action on the roots. It preserves depths and containments of clusters. 
\end{remark}
\begin{notation}\label{CLnotation}
We draw cluster pictures by drawing roots $r \in \cR$ as \smash{\raise4pt\hbox{\clusterpicture\Root[A]{1}{first}{r1};\endclusterpicture}}, and draw ovals around roots to represent clusters (of size $>1$), such as:
$$
\scalebox{1}{\clusterpicture            
  \Root[A] {1} {first} {r1};
  \Root[A] {} {r1} {r2};
  \Root[A] {} {r2} {r3};
  \Root[A] {4} {r3} {r4};
  \Root[A] {1} {r4} {r5};
  \Root[A] {} {r5} {r6};
  \ClusterLD c1[][{2}] = (r1)(r2)(r3);
  \ClusterLD c2[][{2}] = (r5)(r6);
  \ClusterLD c3[][{1}] = (r4)(c2);
  \ClusterLD c4[][{0}] = (c1)(c2)(c3);
\endclusterpicture}
$$
The subscript on the largest cluster $\cR$ is its depth, while the subscripts on the other clusters are their relative depths.
\end{notation}

\begin{notation}
For a cluster $\s$ we use the following terminology.
\renewcommand{\arraystretch}{1}%
\begin{table}[H]  \label[table]{clusternotation}
\begin{tabular}{ll}
size of $\s$ & $|\s|$\\
$\s'$ a child of $\s$, $\s'\!<\!\s$ & $\s'$ is a maximal subcluster of $\s$
 \\
parent of $\s$, $P(\s)$ & $P(\s)$ is the smallest cluster with $\s \subsetneq P(\s)$
\\
singleton & cluster of size 1\\
proper cluster & cluster of size $>$ 1 
\\
even cluster &cluster of even size
\\
odd cluster  &cluster of odd size
\\
\"ubereven cluster  &even cluster all of whose children are even
\\
twin  &cluster of size 2
\\
cotwin & non-\ub\ cluster with a child of size $2g$
\\
principal cluster $\s$& if $|\s| \ne 2g+2$: $\s$ is proper, not a twin or a cotwin;\\
& if $|\c| = 2g+2$: $\s$ has $\ge 3$ children\\
 $\neck{\c}$& if $\s$ is not a cotwin: \\
 &smallest $\neck{\c} \supseteq \c$ that  
does not have an \ub\ parent;\\
& if $\s$ is a cotwin: the child of $\s$ of size $2g$\\
 $\c \wedge \c'$ & smallest cluster containing $\c$ and $\c'$
\\ 
$\so$ & set of odd children of  $\s$\\
centre $z_\s$ & a choice of $z_\s\in \Ks$ with $\min_{r\in\s}v(z_\s-r)=d_\s$\\
$\theta_\c$& a choice of $ \sqrt{c\prod\nolimits_{r \notin \c} (z_\c-r)}$\\
$\epsilon_\s$  & $\epsilon_\s\colon G_K\!\to\! \{\pm1\}$, $ \epsilon_\s(\sigma)\!=\!\frac{\sigma(\theta_{\c^*})}{\theta_{\neck{(\sigma\s)}}} \mod \m$ 
if $\s$ even or a cotwin,\\
& $\epsilon_\s =0$ otherwise \\
$\nu_\s$ & $=v(c)+|\s|d_\s+\sum_{r\notin \s}d_{\{r\}\wedge \s}$, for a proper cluster $\c$\\
$\tilde\lambda_{\s}$ & $=\frac{1}{2}(v(c)+|\tilde{\s}|d_{\s}+ \sum_{r \not\in \s}d_{ \{r\}\wedge\s})$, for a proper cluster $\c$\\
\end{tabular}
\label{clustertable}
\end{table}
\renewcommand{\arraystretch}{1}%
\end{notation}

\begin{remark}\label{CLrem2}
For even clusters and cotwins, $\epsilon_\s$ does not depend on the choice of centre of $\c$. When restricted to the stabiliser of $\s$, it is a homomorphism and does not depend on the choice of square root of $\theta_\s^2$.  
\end{remark}

\newpage
\begin{example}\label{CLex1}
Consider $C : y^2=(x^2+7^2)(x^2-7^{15})(x-7^6)(x-7^6-7^9)$ over $\Q_7$. Its cluster picture is
\vskip -16pt
$$
 \clusterpicture            
  \Root[A] {} {first} {r1};
  \Root[A] {} {r1} {r2};
  \Root[A] {2} {r2} {r3};
  \Root[A] {} {r3} {r4};
  \Root[A] {4} {r4} {r5};
  \Root[A] {} {r5}{r6};
  \ClusterLDName c1[][\frac 32][\t_1] = (r3)(r4);
  \ClusterLDName c2[][3][\t_2] = (r5)(r6);
  \ClusterLDName c3[][5][\mathfrak{a}] = (c1)(c2);
  \ClusterLDName c4[][1][\cR] = (r1)(r2)(c3);
\endclusterpicture, 
\qquad \text{with }
\cR = \{7i, -7i, 7^{\frac{15}{2}}, -7^{\frac{15}{2}}, 7^6, 7^6+7^9\}, \text{ where } i^2=-1.
$$
$\bullet$ \underline{Depths and relative depths}: For each pair of roots $r,r'$ in the picture, 
$
v(r\!-\!r')\ge 1
$, and $v(7i-7^6)=1$ so that $d_{\cR} =1$. Similarly $\mathfrak{a}=\{7^{\frac{15}{2}}, -7^{\frac{15}{2}}, 7^6, 7^6+7^9\}$ is a cluster of depth $d_{\mathfrak{a}}=6$ and therefore relative depth $\delta_{\mathfrak{a}}=5$. Finally, $\t_1 = \{7^{\frac{15}{2}}, -7^{\frac{15}{2}}\}$ has depth $d_{\t_1} = \frac{15}{2}$ and $\t_2 =\{7^6, 7^6+7^9\}$ has depth $d_{\t_2} = 9$. The only other clusters are singletons hence are not assigned any depth. 

 \noindent$\bullet$ \underline{Children}: 
The children of $\cR$ are $\{7i\},\{-7i\}$ and $\mathfrak{a}$, so $\tilde{\cR} = \{\{7i\},\{-7i\}\}$. The children of $\mathfrak{a}$ are $\t_1$ and $\t_2$, so $\tilde{\mathfrak{a}}$ is empty.

\noindent $\bullet$ \underline{Types}:
$\cR, \mathfrak{a}, \t_1, \t_2$ are proper and even. The only odd clusters are singletons. Both $\t_1$ and $\t_2$ are twins, $\mathfrak{a}$ is \ub\ and $\cR$ is a cotwin. The only principal cluster is~$\mathfrak{a}$. 

\noindent $\bullet$  \underline{$\s^*$ and $\s_1\wedge \s_2$}: 
$\t_1^*=\t_2^* = \mathfrak{a}^* = \cR^* = \mathfrak{a}$, $\t_1 \wedge \t_2 = \mathfrak{a}$, $\t_1 \wedge \mathfrak{a} = \mathfrak{a}$ and $\t_1 \wedge \{7i\} = \cR$. 

\noindent $\bullet$ \underline{$z_\s$ and $\epsilon_\s$}: 
Pick $z_{\cR} = z_{\mathfrak{a}} =z_{\t_1} = 0$ and $z_{\t_2} = 7^6$. As $\t_1^*=\t_2^* = \mathfrak{a}^* = \cR^* = \mathfrak{a}$, we get $\epsilon_{\t_1} =\epsilon_{\t_2} = \epsilon_{\cR}=\epsilon_{\mathfrak{a}}$. With our choice of $z_{\mathfrak{a}}$ we obtain $\theta_{\mathfrak{a}} = \sqrt{(0-7i)(0+7i)} = \pm 7$. Say we choose $\theta_{\mathfrak{a}}=7$, then for any $\sigma \in G_K$ we have $\epsilon_{\mathfrak{a}}(\sigma)= \frac{\sigma(\theta_{\mathfrak{a}})}{\theta_{\sigma\mathfrak{a}}} = +1$. 
\end{example}
\medskip
\begin{example}\label{CLex2}
Suppose $C/\Q_p: y^2 = f(x)$ with $f(x) \in \Z_p[x]$ monic. Suppose also that $f(x) \mmod p$ has at least two distinct roots, equivalently $d_\cR =0$. Consider the reduction $\bar{C}/\F_p: y^2=\bar{f}(x)$.

(i) A child of $\cR$ consists of roots that have the same image in the residue field. For example if $p=5$ and $\cR =\{0,1,2,3,5,8,13\}$ we have the cluster picture
 \clusterpicture            
  \Root[A] {} {first} {r1};
  \Root[A] {} {r1} {r2};
  \Root[A] {2} {r2} {r3};
  \Root[A] {} {r3} {r4};
  \Root[A] {3} {r4} {r5};
  \Root[A] {} {r5}{r6};
  \Root[A] {} {r6}{r7};
  \ClusterLDName c1[][][] = (r3)(r4);
   \ClusterLDName c2[][][] = (r5)(r6)(r7);
  \ClusterLDName c4[][0][] = (r1)(r2)(c1)(c2);
\endclusterpicture and $\bar{C} : y^2 = x^2(x-1)(x-2)(x-3)^3$.

(ii) If $f(x) \mmod p$ has a double root and no other repeated roots then the cluster picture has a twin $\t$ and $\bar{C}$ has a node. Generally, for semistable curves, twins contribute nodes to the special fibre of the stable model. \hfill \clusterpicture            
  \Root[A] {} {first} {r1};
  \Root[A] {} {r1} {r2};
  \Root[A] {2} {r2} {r3};
  \Root[A] {} {r3} {r4};
  \Root[A] {} {r4} {r5};
  \Root[Dot] {} {r5}{r6};
  \Root[Dot] {} {r6}{r7};
  \Root[Dot] {} {r7}{r8};
   \Root[A] {} {r8}{r9};
  \ClusterLDName c1[][][\t] = (r1)(r2);
  \ClusterLDName c4[][0][] = (c1)(r3)(r4)(r5)(r6)(r7)(r8)(r9);
\endclusterpicture 

(iii) The normalisation of $\bar{C}$ is obtained by removing the maximal square factor in $\bar{f}(x)$, so the new roots are in 1:1 correspondence with the odd clusters. Explicitly, it is the hyperelliptic curve given by 
$
y^2=\prod_{\s \in \tilde{\cR}} (x-\bar{z}_{\s})
$.
For example, for the curve in (i), the normalisation is given by $y^2 = (x-1)(x-2)(x-3)$.

(iv) When $\cR$ is \ub\ the normalisation of $\bar{C}$ is $y^2 =1$, which is a union of two lines. Generally, for semistable curves, \ub\ clusters contribute pairs of $\mathbb{P}^1$s to the special fibre of both semistable and regular models of $C/\Q_p^{\text{nr}}$. 

(v) Suppose that $\cR =\{1,2,p,2p,3p,4p\}$ so the cluster picture is \clusterpicture            
  \Root[A] {} {first} {r1};
  \Root[A] {} {r1} {r2};
  \Root[A] {2} {r2} {r3};
  \Root[A] {} {r3} {r4};
  \Root[A] {} {r4} {r5};
  \Root[A] {} {r5}{r6};
  \ClusterLDName c1[][1][] = (r3)(r4)(r5)(r6);
  \ClusterLDName c4[][0][] = (r1)(r2)(c1);
\endclusterpicture\ for $p>3$. Applying the change of variable $x'=\frac{1}{x}$ gives a curve whose cluster picture is
\clusterpicture            
  \Root[A] {} {first} {r1};
  \Root[A] {} {r1} {r2};
  \Root[A] {2} {r2} {r3};
  \Root[A] {} {r3} {r4};
  \Root[A] {} {r4} {r5};
  \Root[A] {} {r5}{r6};
  \ClusterLDName c1[][1][] = (r1)(r2);
  \ClusterLDName c4[][-1][] = (c1)(r3)(r4)(r5)(r6);
\endclusterpicture. Generally, changing the model can convert twins to cotwins and vice versa, and the number of twins plus cotwins is model independent. 

(vi) For a curve as in (ii), the node on $\bar{C}$ is split if and only if $\prod_{r\notin \t} (\bar{z}_t-\bar{r})$ is a square in $\F_p$. Equivalently if and only if $\epsilon_{\t}(\Frob) =+1$. Generally, $\epsilon$ keeps track of whether the nodes are split or non split and similar data. 
\end{example}
\newpage
\begin{example}\label{ex:nuR}
Let $C/\Q_p\!:\! y^2 \!=\!f(x)$ with $f(x)\!\in \!\Z_p[x]$, $\deg(f)\!=\!8$ and ~$v(c) \!\ge \!0$. 

(i) Suppose that $f(x) \mod p$ has distinct roots, equivalently that the cluster picture of $C$ is 
\clusterpicture            
  \Root[A] {} {first} {r1};
  \Root[A] {} {r1} {r2};
  \Root[A] {} {r2} {r3};
  \Root[A] {} {r3} {r4};
  \Root[A] {} {r4} {r5};
  \Root[A] {} {r5}{r6};
  \Root[A] {} {r6}{r7};
  \Root[A] {} {r7}{r8};
  \ClusterLDName c1[][0][] = (r1)(r2)(r3)(r4)(r5)(r6)(r7)(r8);
 \endclusterpicture. In this case $\nu_\cR = v(c)$. Here when $\nu_{\cR}$ is even $C$ has good reduction and when $\nu_\cR$ is odd it is a quadratic twist of such a curve.
 
 (ii) Suppose that $f(x) \mod p$ has a repeated root of multiplicity 5 and the corresponding roots in $\bar{\Q}_p$ are equidistant with distance $v(r_i-r_j)=n$, equivalently the cluster picture of $C$ is 
 \clusterpicture            
  \Root[A] {} {first} {r1};
  \Root[A] {} {r1} {r2};
  \Root[A] {} {r2} {r3};
  \Root[A] {2} {r3} {r4};
  \Root[A] {} {r4} {r5};
  \Root[A] {} {r5}{r6};
  \Root[A] {} {r6}{r7};
  \Root[A] {} {r7}{r8};
  \ClusterLDName c1[][n][\s] = (r4)(r5)(r6)(r7)(r8);
  \ClusterLDName c2[][0][] = (r1)(r2)(r3)(c1);
 \endclusterpicture. 
 
 The substitution $x' = \frac{x-z_\s}{p^n}$ gives 
 $
 f(x) = c\prod_{r\in \cR}(p^nx'\!+\!z_\s\!-\!r).
 $
 Observe that $v(z_\s\!-\!r)=d_\s =n$ for $r\in \s$ and $v(z_\s\!-\!r)= d_{\{r\} \wedge \s} =0$ otherwise. The equation for $C$ becomes 
 $$
 y^2 = cp^{5n}\prod_{r\notin \s}(p^nx'\!+\!z_\s\!-\!r)\prod_{r\in \s}(x'\!+\!\frac{z_\s\!-\!r}{p^n}).
 $$ 
 Note that $v(cp^{5n})$ is precisely $v(c) + |\s|d_\s + \sum_{r \notin \s} d_{\{r\}\wedge \s} = \nu_\s$, and by construction each factor of the above polynomial has integral coefficients. 
 
 In general for any proper cluster $\s$ the above change of variable will give an integral equation for $C$ of the form $cp^mh(x)$ where $v(c)\!+\!m\!=\!\nu_\s$ and $h(x)$ is integral. When $n \in \Z$, $z_\s \in \Q_p$ and $\nu_\s \in 2\Z$ the substitution $y = y'p^{\frac{\nu_\s}{2}}$ gives an equation for $C/\Q_p$ whose reduction is of the form
 $$
 y^2=(\text{constant}) \prod_{r\in \s}(x-r').
 $$
 When $\s$ is principal, this is a curve over $\F_p$ of genus at least 1.
 \end{example}

\medskip
 \begin{example}\label{ex:lambdatilde}
Consider the two curves $C_1 : y^2 = x^6-p$ and $C_2 : y^2=x(x^5-p)$. These have cluster picture \clusterpicture            
  \Root[A] {} {first} {r1};
  \Root[A] {} {r1} {r2};
  \Root[A] {} {r2} {r3};
  \Root[A] {} {r3} {r4};
  \Root[A] {} {r4} {r5};
  \Root[A] {} {r5}{r6};
  \ClusterLDName c1[][n][] = (r1)(r2)(r3)(r4)(r5)(r6);
 \endclusterpicture, where $n\!=\!\frac16$ for $C_1$ and $n\!=\!\frac15$ for $C_2$. These curves have $2\tilde{\lambda}_\cR =v(c)+|\tilde{\cR}|d_\cR+\sum_{r \notin \cR} d_{\{r\}\wedge \cR} =6n$. The denominator of $2\tilde{\lambda}_\cR$ is either $1$ or $5$. This reflects the different inertia action on the roots: it has no fixed points for $C_1$ and one fixed point for $C_2$.
 
The general case is more subtle. Roughly, for a proper cluster $\s$, the denominator of $\tilde{\lambda}_\s$ is related to the inertia action on $\tilde{\s}$ and to the inertia action by geometric automorphisms on the reduced curve associated to $\s$ \`a la Example \ref{ex:nuR}.
\end{example}
\medskip

\newpage
\newpage
\section{BY trees}\label{se:BYtree}

\begin{definition}[BY tree]
\label{defbytree}
A \emph{BY tree} is a finite tree $T$ with a \emph{genus} function $g\colon V(T)\to\Z_{\ge 0}$ on vertices, a length function $\delta\colon E(T) \to \R_{>0}$  
on edges, and a 2-colouring blue/yellow on vertices and edges such that
\begin{enumerate}
\item 
yellow vertices have genus 0, degree $\ge 3$, and only yellow incident edges; 
\item
blue vertices of genus 0 have at least one yellow incident edge;
\item
at every vertex, $2g(v)+2\ge$ \# blue incident edges at $v$.
\end{enumerate}
Note that all leaves (vertices of degree 1) are necessarily blue.
\end{definition}

\begin{notation}
In diagrams, yellow edges are drawn squiggly (\begin{tikzpicture}[scale=\GraphScale]
  \BlueVertices
  \InfVertices
  \Vertex[x=1.50,y=0.000,L=\InfLabel]{1};
  \InfVertices
  \Vertex[x=0.000,y=0.000,L=\InfLabel]{2}
  \YellowEdges
  \Edge(1)(2)
\end{tikzpicture}) and yellow vertices 
hollow (\smash{\raise-0.5pt\hbox{\begin{tikzpicture}[scale=\GraphScale]
  \YellowVertices
  \VertexLN[x=3.00,y=0.000,L=\relax]{2}{};
 \end{tikzpicture}}}) for the benefit of viewing them in black and white. We write the genus of a blue vertex inside the vertex (\smash{\raise-1pt\hbox{\begin{tikzpicture}[scale=\GraphScale]
  \BlueVertices
  \VertexLN[x=3.00,y=0.000,L=2]{2}{};
 \end{tikzpicture}}}); we omit it for blue vertices with genus 0. We write the length of edges next to them.
\end{notation}

\begin{definition}
\label{StoTC}
 The BY tree $T_C$ associated to $C$ is given by: 
\begin{itemize}
\item 
one vertex $v_\s$ for every proper cluster $\s$, coloured yellow
if $\s$ is \ub\ and blue otherwise,
\item 
for every pair $\c'<\s$ with $\c'$ proper, link $v_{\c'}$ and $v_\s$ with an edge, yellow of length $2\delta_{\c'}$ if $\c'$ is even and blue of length $\delta_{\c'}$ if $\c'$ is odd,
\item
if $\cR$ has size $2g+2$ and is a union of two proper children, remove $v_{\cR}$ and merge the two remaining edges (adding their lenghts), 
\item
if $\cR$ has size $2g+2$ and has a child $\c$ of size $2g+1$, remove $v_{\cR}$ and the edge between $v_{\cR}$ and $v_\c$,
\item the genus $g(v_\c)$ of a blue vertex $v_\c$ is defined so that $|\tilde{\s}| = 2g(v_\s)+2$ or $2g(v_\s)+1$. 
\end{itemize}
\end{definition}

\begin{definition}
\label{autb}
An \emph{isomorphism of BY trees} $T\to T'$ is a pair $(\alpha, \epsilon)$ where 
\begin{itemize}
\item 
$\alpha$ is a graph isomorphism $T\to T'$ 
that preserves edge lengths, genera of vertices and colours, and
\item
for every connected component $Y$ of 
the yellow part $T_y\subset T$, $\epsilon(Y)\in \{\pm 1\}$. 
\end{itemize}
Equivalently, 
$\epsilon$ is a collection of signs $\epsilon(v)\in \{\pm 1\}$ and $\epsilon(e)\in \{\pm 1\}$ for every 
yellow vertex and yellow edge, such that $\epsilon(v)=\epsilon(e)$ whenever $e$ 
ends at $v$.
Isomorphisms are composed by the cocycle rule
$$
  (\alpha, \epsilon_\alpha)\circ (\beta, \epsilon_\beta) = 
    \bigl(\alpha\circ\beta, \bullet\mapsto\epsilon_\beta(\bullet)\epsilon_\alpha(\beta(\bullet))\bigr).
$$
An \emph{automorphism} of $T$ is an isomorphism from $T$ to itself. 
\end{definition}

\begin{definition}\label{actionb}
The induced action of $G_K$ is given by 
$
\sigma \mapsto (\alpha_{\sigma}, \epsilon_{\sigma}) \in \Aut T_C
$
with
$
\alpha_{\sigma}(v_\c) = v_{\sigma (\c)}
$
for all vertices $v_{\s}$, and $ \epsilon_\sigma(Y) =\epsilon_{\s_Y}(\sigma)$ for yellow components~$Y$. Here the cluster $\s_Y$ is taken so that $v_{\s_Y}$ is any vertex in the closure of $Y$, 
other than the maximal one among these clusters. Note that $\epsilon_{\s_Y}(\sigma)$ depends on the choices of square roots of~$\theta^2$.
\end{definition}
\begin{notation}
We draw arrows between edges and signs above yellow components to represent automorphisms. 
\end{notation}
\begin{remark}\label{inertiatrivial}
For semistable curves, inertia maps to the identity in $\Aut(T_C)$, that is $\alpha_\sigma = \id$ and $\epsilon_{\sigma}(Y) = +1$ for all $\sigma \in I_K$ and all yellow components $Y$. 
\end{remark}
\begin{lemma}\label{genuscurve}
The genus of the curve satisfies
$$
g= \#(\text{connected components of the blue part of } T_C) -1 + \sum_{v \in V(T_C)} g(v). 
$$
\end{lemma}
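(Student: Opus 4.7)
The plan is to prove the formula by directly counting $|\cR|$ in terms of the combinatorial data on $T_C$, after reducing to the case in which every proper cluster is a vertex of $T_C$.

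To perform this reduction, in the two cases of Definition~\ref{StoTC} where $v_\cR$ is deleted I reinsert it together with its appropriate colour and genus $0$, and restore the deleted edges. Let $T'$ denote the resulting graph. In the \ub\ case one only adds yellow data; in the case where $\cR$ has two proper odd children, one inserts a blue degree-$2$ vertex subdividing a blue edge; in the case where $\cR$ has a child of size $2g+1$, one attaches a blue pendant leaf along a blue edge. In each case both $G := \sum_v g(v)$ and the number $b$ of connected components of the blue subforest are unchanged. Hence it suffices to prove the formula for $T'$.

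In $T'$ every proper cluster corresponds to a vertex, so every root gives a singleton which is the child of a unique proper cluster; thus $|\cR| = \sum_\s s(\s)$ where $s(\s)$ is the number of singleton children of $\s$. Writing $s(\s) = |\tilde\s| - p_o(\s)$ with $p_o(\s)$ the number of proper odd children, observe that $\sum_\s p_o(\s) = |E_b|$, because every proper odd cluster other than $\cR$ is the lower endpoint of exactly one blue edge (the edge to its parent). On the other hand, the defining relation for $g(v_\s)$ gives $|\tilde\s| = 2g(v_\s) + 1$ or $2g(v_\s) + 2$ on blue vertices (according as $\s$ is odd or even) and $|\tilde\s| = 0$ on yellow vertices, whence $\sum_\s |\tilde\s| = 2G + |V_b| + |V_b^e|$, where $V_b^e$ denotes the set of blue vertices whose cluster is even. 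The blue subgraph is a subforest of $T'$, so $|V_b| - |E_b| = b$, and the formulas combine to give
\[
|\cR| \;=\; 2G \;+\; b \;+\; |V_b^e|.
\]

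It remains to compute $|V_b^e|$, which follows from a short parity argument. Since blue edges correspond to odd proper children, any blue vertex other than the top vertex of its blue component is the lower endpoint of a blue edge and hence represents an odd cluster. Conversely the top vertex of a blue component either equals $v_\cR$ or has a yellow up-edge, and in the latter case its cluster must be even. Consequently $|V_b^e| = b$ when $\cR$ is even and $|V_b^e| = b - 1$ when $\cR$ is odd; substituting $|\cR| = 2g+2$ or $|\cR| = 2g+1$ respectively yields $g = G + b - 1$ in both cases. The main obstacle is the bookkeeping of the reduction step; once $v_\cR$ is restored the remainder is a clean parity count.
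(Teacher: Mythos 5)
Your proof is correct. The present paper does not supply its own proof of this lemma --- it refers the reader to the companion paper [hyble, Definitions 3.23 and 3.33, Remark 3.24, Theorem 5.1] --- so there is nothing in the text to compare against; your argument is a clean, self-contained double count deriving the formula directly from Definitions~\ref{defbytree} and~\ref{StoTC}. The reduction to $T'$ is sound: in each removal case of Definition~\ref{StoTC} the restored vertex $v_\cR$ has genus $0$ (yellow vertices have genus $0$ by definition, and in the two non-\"ubereven removal cases $|\tilde{\cR}|=2$ with $\cR$ even, forcing $g(v_\cR)=0$), so neither $b$ nor $G$ changes. Your chain of identities is also correct: $|\cR|=\sum_\s|\tilde{\s}|-|E_b|$ (since every root is a singleton child of a unique proper cluster), $|\tilde{\s}|=2g(v_\s)+1$ or $+2$ on blue vertices according as $|\s|$ is odd or even (the parity of $|\tilde{\s}|$ matches that of $|\s|$ because the children of $\s$ partition $\s$), $|\tilde{\s}|=0$ on yellow vertices, and the forest relation $|V_b|-|E_b|=b$, giving $|\cR|=2G+b+|V_b^e|$. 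The determination of $|V_b^e|$ is exactly right and hinges on the BY tree axiom that yellow vertices have only yellow incident edges, so that a blue edge has two blue endpoints and the top of each blue component is its unique vertex without a blue up-edge. Substituting $|\cR|\in\{2g+1,2g+2\}$ then yields $g=G+b-1$ in both parity cases.
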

\newpage
\begin{example}\label{BYex1}
 Consider the cluster picture from Example \ref{CLex1}. 
There are 4 proper clusters $\cR, \mathfrak{a}, \t_1$ and $\t_2$ so the BY tree has 4 vertices $v_\cR, v_{\mathfrak{a}}, v_{\t_1}, v_{\t_2}$, where only $v_{\mathfrak{a}}$ is yellow since $\mathfrak{a}$ is \ub. There are 3 yellow edges corresponding to the three even children $\mathfrak{a}\!< \!\cR, \t_1 \!<\!\mathfrak{a}, \t_2 \!<\! \mathfrak{a}$, of length $2\!\times \!5, 2 \!\times \!\frac{3}{2}, 2 \!\times\! 3$ respectively.
\vskip 6pt
$$\smash{\raise0pt\hbox{\clusterpicture            
  \Root[A] {} {first} {r1};
  \Root[A] {} {r1} {r2};
  \Root[A] {2} {r2} {r3};
  \Root[A] {} {r3} {r4};
  \Root[A] {4} {r4} {r5};
  \Root[A] {} {r5}{r6};
  \ClusterLDName c1[][\frac 32][\t_1] = (r3)(r4);
  \ClusterLDName c2[][3][\t_2] = (r5)(r6);
  \ClusterLDName c3[][5][\mathfrak{a}] = (c1)(c2);
  \ClusterLDName c4[][1][\cR] = (r1)(r2)(c3);
\endclusterpicture }}
\qquad\Longrightarrow\qquad
\smash{\raise-11pt\hbox{\begin{tikzpicture}[scale=\GraphScale]
  \BlueVertices
  \VertexLN[x=3.00,y=0.000,L=\relax]{2}{};
  \VertexLE[x=0.000,y=0.000,L=\relax]{3}{};
  \VertexLW[x=1.50,y=1.00,L=\relax]{4}{};
  \YellowVertices
  \Vertex[x=1.50,y=0.000,L=\relax]{1}
  \YellowEdges
  \Edge(1)(3)
  \Edge(1)(2)
  \Edge(1)(4)
\TreeEdgeSignS(1)(2){0.5}{6}
\TreeEdgeSignS(1)(3){0.5}{3}
\TreeEdgeSignW(1)(4){0.5}{10}
\end{tikzpicture}
}}
$$
\end{example}
\vskip 12pt
\begin{remark}\label{BYrem1}
(i) The depth $d_{\cR}$ is not relevant for the BY tree.\\
(ii) The yellow part forms an open subset (since yellow vertices correspond to \ub\ clusters, which are even and only have even children). \\
(iii) One can reconstruct the cluster picture from the BY tree and $d_{\cR}$, provided that there is a vertex $v_{\cR}$ and it is identified. 
\end{remark}

\begin{example}\label{BYex2}
Consider the curve $C/\Q_{11}$ given by $y^2 = f(x)$ with $f(x)$ monic with set of roots 
$$
\cR=\{0,1,2,\>\>\zeta_7\!-\!11, \zeta_7\!+\!11,\>\> \zeta_7^2\!-\!11, \zeta_7^2\!+\!11, \>\>\zeta_7^4\!-\!11, \zeta_7^4\!+\!11\},
$$
where $\zeta_7^7=1$ and $\zeta_7^3+5\zeta_7^2+4\zeta_7 + 10\equiv 0 \mmod 11$. Its cluster picture and BY tree are
$$\smash{\raise 0pt\hbox{\clusterpicture            
  \Root[A] {} {first} {r1};
  \Root[A] {} {r1} {r2};
  \Root[A] {} {r2} {r3};
  \Root[A] {4} {r3} {r4};
  \Root[A] {} {r4} {r5};
  \Root[A] {4} {r5}{r6};
  \Root[A] {} {r6}{r7};
   \Root[A] {4} {r7}{r8};
   \Root[A] {} {r8}{r9};
  \ClusterLDName c1[][1][\t_1] = (r4)(r5);
  \ClusterLDName c2[][1][\t_2] = (r6)(r7);
  \ClusterLDName c3[][1][\t_4] = (r8)(r9);
  \ClusterLDName c4[][0][\cR] = (r1)(r2)(r3)(c1)(c2)(c3);
\endclusterpicture }}
\qquad \text{and} \qquad
\smash{\raise -16pt\hbox{\begin{tikzpicture}[scale=\GraphScale]
  \BlueVertices
  \VertexLE[x=3.00,y=0.000,L=\relax]{2}{$v_{\t_4}$};
  \VertexLW[x=0.000,y=0.000,L=\relax]{3}{$v_{\t_1}$};
  \VertexLE[x=1.50,y=1.00,L=\relax]{4}{$v_{\t_2}$};
  \VertexLS[x=1.50,y=0.000,L=1]{1}{$v_{\cR}$};
  \YellowEdges
  \Edge(1)(3)
  \Edge(1)(2)
  \Edge(1)(4)
\TreeEdgeSignS(1)(2){0.5}{2}
\TreeEdgeSignS(1)(3){0.5}{2}
\TreeEdgeSignW(1)(4){0.5}{2}
\end{tikzpicture}}},
$$
\vskip 10pt
\noindent with centres for the twins $z_{\t_1} = \zeta_7$, $z_{\t_2} = \zeta_7^2$ and $z_{\t_4} = \zeta_7^4$. Note that $\Frob(\t_1)= \t_4$, $\Frob(\t_4)= \t_2$ and $\Frob(\t_2) = \t_1$. We find that 
$$
\theta_{\t_1}^2 = (\zeta_7-\zeta_7^2)^2(\zeta_7-\zeta_7^4)^2\zeta_7(\zeta_7-1)(\zeta_7-2) \equiv \zeta_7^2+3\zeta_7+7 \mmod 11,
$$
and similarly for $\theta_{\t_2}^2$ and $\theta_{\t_4}^2$.
We can pick $\theta_{\t_1}, \theta_{\t_2},\theta_{\t_4}$ so that $\Frob(\theta_{\t_1}) = \theta_{\t_4}$, $\Frob(\theta_{\t_4}) = \theta_{\t_2}$ and therefore $\epsilon_{\t_1}(\Frob)= \epsilon_{\t_4}(\Frob) =+1$. Then $\epsilon_{\t_2}(\Frob)\equiv \frac{\Frob^3(\theta_{\t_1})}{\theta_{\t_1}} \mmod 11$. One checks that $\theta^2_{\t_1}$ is not a square in $\F_{11}(\zeta_7)$, so $\epsilon_{\t_2}(\Frob) =-1$. 
\begin{minipage}[t]{.7\textwidth}
\quad In terms of the BY tree, $\Frob$ permutes the three edges cyclicly. Here the yellow components are the three edges $v_{\cR}v_{\t_1}$, $v_{\cR}v_{\t_2}$, $v_{\cR}v_{\t_4}$, and $\epsilon_{\Frob}(v_{\cR}v_{\t_1})\!=\!\epsilon_{\Frob}(v_{\cR}v_{\t_4})\!=\!+1$, while $\epsilon_{\Frob}(v_{\cR}v_{\t_2})\!=\!-1$.
\end{minipage}
\begin{minipage}{0.25\textwidth}
$\hfill{\smash{\raise -40pt\hbox{\begin{tikzpicture}[scale=\GraphScale]
  \BlueVertices
  \VertexLE[x=3.00,y=0.000,L=\relax]{2}{$v_{\t_4}$};
  \VertexLW[x=0.000,y=0.000,L=\relax]{3}{$v_{\t_1}$};
  \VertexLN[x=1.50,y=1.00,L=\relax]{4}{$v_{\t_2}$};
  \VertexLS[x=1.50,y=0.000,L=1]{1}{};
  \YellowEdges
  \Edge(1)(3)
  \Edge(1)(2)
  \Edge(1)(4)
\TreeEdgeSignS(1)(2){0.5}{}
\TreeEdgeSignS(1)(3){0.5}{}
\TreeEdgeSignW(1)(4){0.5}{}
\TreeSignAt(1)(-1.2,0.2){$+$}
\TreeSignAt(2)(-0.3,-0.2){$+$}
\TreeSignAt(4)(0.3,-0.2){-}
\EArr1312{in=-120,out=-60}
\EArr1214{in=0,out=60}
\EArr1413{in=90,out=180}
\end{tikzpicture}}}}
$
\end{minipage}
\end{example}

\begin{example}\label{BYex3}
Let 
$C/\Q_p:y^2=(x-1)(x-2)(x-3)(x-p^2)(x-p^{n+2})(x+p^{n+2})$ for $p \ge 5$ and $n \ge 1$. The substitutions $(x',y')=(\frac{1}{x-1},\frac{y}{x-1})$ and $(x'',y'') = ( \frac{1}{x}, \frac{y}{x})$ yield other models. Their cluster pictures are respectively: 
$$\smash{\raise0pt\hbox{\clusterpicture            
  \Root[A] {} {first} {r1};
  \Root[A] {} {r1} {r2};
  \Root[A] {} {r2} {r3};
  \Root[A] {2} {r3} {r4};
  \Root[A] {2} {r4} {r5};
  \Root[A] {} {r5}{r6};
\ClusterLDName c1[][n][] = (r5)(r6);
\ClusterLDName c2[][2][] = (r4)(c1);
\ClusterLDName c3[][0][] = (r1)(r2)(r3)(c1)(c2);
\endclusterpicture, \qquad
\clusterpicture            
  \Root[A] {} {first} {r1};
  \Root[A] {} {r1} {r2};
  \Root[A] {2} {r2} {r3};
  \Root[A] {2} {r3} {r4};
  \Root[A] {} {r4} {r5};
\ClusterLDName c1[][n][] = (r4)(r5);
\ClusterLDName c2[][2][] = (r3)(c1);
\ClusterLDName c3[][0][] = (r1)(r2)(c1)(c2);
\endclusterpicture \qquad \text{ and }\qquad
\clusterpicture            
  \Root[A] {} {first} {r1};
  \Root[A] {} {r1} {r2};
  \Root[A] {} {r2} {r3};
  \Root[A] {3} {r3} {r4};
  \Root[A] {2} {r4} {r5};
  \Root[A] {} {r5}{r6};
\ClusterLDName c1[][2][] = (r1)(r2)(r3);
\ClusterLDName c2[][n][] = (c1)(r4);
\ClusterLDName c3[][\!-\!n\!-\!2][] = (c1)(c2)(r5)(r6);
\endclusterpicture}}.
$$
\vskip 5pt
\noindent Note that these all have the same BY tree: 
$
\smash{\raise -7pt\hbox{\begin{tikzpicture}[scale=\GraphScale]
  \BlueVertices
  \VertexLE[x=.00,y=0.000,L=1]{1}{};
  \VertexLW[x=1.000,y=0.000,L=\relax]{2}{};
  \VertexLN[x=2,y=0.00,L=\relax]{3}{};
  \YellowEdges
  \Edge(2)(3)
  \BlueEdges
  \Edge(1)(2)
  \TreeEdgeSignS(1)(2){0.5}{2}
  \TreeEdgeSignS(2)(3){0.5}{2n}
\end{tikzpicture}}}.
$
Generally the BY tree is model independent. 
\end{example}

\begin{remark}
For semistable curves the special fibre of the minimal regular model is the double cover of the BY tree ramified over the blue part (with all edge lengths halved). In Example \ref{BYex3} the dual graph is 
$$
\smash{\raise -10pt\hbox{\begin{tikzpicture}[scale=\GraphScale]
  \BlackVertices
  \VertexLE[x=1.00,y=0.000,L=1]{1}{};
  \VertexLN[x=2,y=0.00,L=\relax]{3}{};
  \VertexLN[x=3,y=0.5,L=\relax]{4}{};
   \VertexLN[x=4,y=0.5,L=\relax]{5}{};
    \VertexLN[x=5,y=0.5,L=\relax]{6}{}; 
      \VertexLN[x=6,y=0,L=\relax]{7}{}; 
       \VertexLN[x=3,y=-0.5,L=\relax]{8}{};
   \VertexLN[x=4,y=-0.5,L=\relax]{9}{};
    \VertexLN[x=5,y=-0.5,L=\relax]{10}{}; 
  \BlackEdges
  \Edge(1)(3)
  \Edge(3)(4)
  \Edge(4)(5)
  \Edge[style={dashed}](5)(6)
    \Edge(6)(7)
     \Edge(7)(10)
      \Edge[style={dashed}](10)(9)
       \Edge(9)(8)
        \Edge(3)(8)
\end{tikzpicture}}},
$$
where the loop has $2n$ vertices. 
\end{remark}

\newpage

\section{Reduction type}\label{se:redtype}

In this section we explain how to read off information about the reduction of both $C$ and its Jacobian from the cluster picture of $C$. 

\begin{theorem}[Semistability criterion] \label{semistability criterion subsection}

The  curve $C$, or equivalently  $\textup{Jac }C$, is semistable if and only if the following three conditions are satisfied:
\begin{itemize}
\item[(1)] the field extension $K(\mathcal{R})/K$ given by adjoining the roots of $f(x)$ has ramification degree at most $2$,
\item[(2)] every proper cluster is invariant under the action of the inertia group $I_K$, 
\item[(3)] every principal cluster $\s$ has $d_\s\in \mathbb{Z}$ and $\nu_\mathfrak{s}\in 2\mathbb{Z}$.
\end{itemize}
\end{theorem}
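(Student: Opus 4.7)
The plan is to combine two ingredients: the classical equivalence that $C$ has semistable reduction if and only if $\operatorname{Jac} C$ does (Deligne--Mumford, Raynaud), with both being equivalent to the minimal regular model $\mathcal{C}^{\min}$ over $\mathcal{O}_K$ having reduced, nodal special fibre; and the explicit cluster-theoretic construction of a semistable model of $C$ developed in \cite{hyble}.

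For the sufficiency direction, I would assume (1), (2), (3) hold and build a semistable model directly. For each principal cluster $\mathfrak{s}$, condition (3) guarantees that the substitution of Example~\ref{ex:nuR}, namely $x = \pi^{d_\mathfrak{s}} x' + z_\mathfrak{s}$ and $y = \pi^{\nu_\mathfrak{s}/2} y'$, is defined over $K$ and yields an integral equation whose reduction is a smooth hyperelliptic curve of genus at least one over the residue field. Twins then contribute nodes (Example~\ref{CLex2}(ii)), \"ubereven clusters contribute pairs of $\mathbb{P}^1$'s (Example~\ref{CLex2}(iv)), and the principal components are glued via chains of $\mathbb{P}^1$'s whose lengths are dictated by the relative depths. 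Over $\mathcal{O}_{K^{\text{nr}}}$, condition (1) ensures the construction uses at most one layer of quadratic ramification, which is absorbed by the $y\mapsto\pm y$ ambiguity. Finally, condition (2) makes the entire configuration Galois-equivariant, so the model descends to $\mathcal{O}_K$.

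Conversely, assume $C$ is semistable. Then inertia acts trivially on the dual graph of the special fibre of $\mathcal{C}^{\min}_{\mathcal{O}_{K^{\text{nr}}}}$. By the description of this graph via the BY tree of $C$ (foreshadowed in Section~\ref{se:BYtree} and made precise by the forthcoming Theorem~\ref{th:BYTreeDualGraph}), triviality of the inertia action forces inertia to fix every proper cluster setwise, yielding (2). Condition (3) is then forced by the requirement that each principal component of the special fibre be defined over $k$ rather than merely over $\bar k$: if $d_\mathfrak{s} \notin \mathbb{Z}$ or $\nu_\mathfrak{s} \notin 2\mathbb{Z}$, the coordinate change of Example~\ref{ex:nuR} cannot be carried out over $K$. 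Condition (1) emerges from the remaining ramification: since one quadratic extension can always be absorbed into the $y$-coordinate, at most ramification degree $2$ in $K(\mathcal{R})/K$ is compatible with semistability over $K$.

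The subtlest step is pinning down condition (1) precisely: the bound of exactly $2$, rather than $1$, reflects the double-cover structure of $C$ and allows a single layer of ramification to be absorbed into the $y$-coordinate. Making this rigorous requires a careful analysis of the interplay between the inertia action on the roots $\mathcal{R}$, the leading coefficient $c$, and the square roots $\theta_\mathfrak{s}$ appearing in the definition of $\epsilon_\mathfrak{s}$. This is where most of the genuine work lies; once (1) is established, conditions (2) and (3) follow more mechanically from the general framework of semistable reduction and the BY-tree/dual-graph dictionary.
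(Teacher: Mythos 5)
The paper does not prove Theorem \ref{semistability criterion subsection}: the introduction states that, apart from Theorem \ref{thm:discriminant_by}, no proofs are given, and the references point to \cite[Theorem 1.8, Theorem 7.1, Appendix C]{m2d2}. Your overall strategy --- construct a semistable model over $\mathcal{O}_{\Knr}$ from the cluster picture for sufficiency, and analyse the inertia action for necessity --- mirrors the route taken in \cite{m2d2} and is a reasonable choice, but the execution has gaps that prevent it from being a proof.

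Concretely: in the sufficiency direction you assert that the substitution of Example~\ref{ex:nuR} produces an integral equation ``whose reduction is a smooth hyperelliptic curve of genus at least one.'' That is false whenever $\s$ has proper subclusters; the reduction $\bar{y}^2 = c_\s\prod_{r\in\s}(\bar{x}-\bar{r}')$ then has repeated roots, so it is nodal, and verifying that the glued pieces form a regular model with reduced nodal special fibre is precisely where conditions (1)--(3) must do work --- it is not a consequence of the quoted examples. In the necessity direction, you claim that $d_\s\notin\Z$ or $\nu_\s\notin 2\Z$ would force a component not ``defined over $k$,'' but the minimal regular model lives over $\mathcal{O}_{\Knr}$ whose residue field $\bar{k}$ is algebraically closed, so that is not the relevant obstruction; the actual issue is that the construction would require a ramified extension of $\Knr$, and you do not argue that \emph{every} semistable model is forced to pass through this coordinate change. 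Finally, you yourself concede that pinning down condition (1) --- why the threshold is ramification degree exactly $2$, via the interaction of the $\theta_\s$'s with the leading coefficient $c$ --- is ``where most of the genuine work lies'' and you do not carry it out. Since that is the one genuinely delicate part of the statement, the proposal is a plausible outline of the strategy in \cite{m2d2} rather than a proof.
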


\begin{remark} \label{field of semistability remark}
It follows from Theorem \ref{semistability criterion subsection} that $C$ is semistable over any ramified quadratic extension of $K(\mathcal{R})$. 
\end{remark}

\begin{theorem}[Good reduction of the curve] \label{good red curve}
The curve $C$ has good reduction if and only if the following three conditions are all satisfied:
\begin{itemize}
\item[(1)] the field extension $K(\mathcal{R})/K$ is unramified,
\item[(2)] every proper cluster has size at least $2g+1$,
\item[(3)] the (necessarily unique) principal cluster has $\nu_\s\in 2\mathbb{Z}$.
\end{itemize}
\end{theorem}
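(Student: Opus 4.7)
The plan is to combine Theorem~\ref{semistability criterion subsection} with the dictionary between BY trees and special fibres of semistable curves (final remark of Section~\ref{se:BYtree}): for semistable $C$, the special fibre $\Cmink$ is the double cover of $T_C$ ramified over its blue part, so $\Cmink$ is smooth and geometrically connected exactly when $T_C$ consists of a single blue vertex, which must have genus $g$ by Lemma~\ref{genuscurve}. Consequently, good reduction is equivalent to the conjunction of ``$C$ semistable'' and ``$T_C$ is a single blue vertex of genus $g$''. For the ($\Rightarrow$) direction, good reduction implies semistability, and Theorem~\ref{semistability criterion subsection} supplies $\nu_\s\in 2\Z$ for every principal cluster together with $I_K$-invariance of all proper clusters and ramification degree $\leq 2$ for $K(\cR)/K$. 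The single-vertex condition, read against Definition~\ref{StoTC}, forces that no proper cluster has size in $\{2,\ldots,2g\}$, which gives (2). The surviving vertex corresponds to a unique principal cluster --- namely $\cR$, or, when $|\cR|=2g+2$ and $\cR$ has a proper child of size $2g+1$, that child --- so (3) follows from semistability. For (1), if $K(\cR)/K$ were ramified, some pair $r\neq r'\in\cR$ would be swapped by $I_K$, so $v(r-r')>0$ and the two roots would reduce to the same element of $\bar k$; but good reduction identifies $\cR$ with the set of Weierstrass points of the smooth special fibre, which have pairwise distinct images in $\bar k$.

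For the ($\Leftarrow$) direction, condition (1) immediately supplies ramification degree $1$, $I_K$-invariance of every cluster (inertia acts trivially on $\cR$), and integrality of $v(r-r')$ for all roots $r,r'$, whence $d_\s\in\Z$ on every proper $\s$. A short case analysis on $|\cR|\in\{2g+1,2g+2\}$ under (2) shows that a unique principal cluster $\s$ exists (as described above), for which (3) gives $\nu_\s\in 2\Z$; Theorem~\ref{semistability criterion subsection} then certifies semistability. The same case analysis, run through the construction of Definition~\ref{StoTC} --- in particular its merging and removal rules when $|\cR|=2g+2$ --- confirms that $T_C$ is a single blue vertex of genus $g$, so the opening observation delivers good reduction.

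The main obstacle is the forward implication of (1): the semistability criterion only bounds the ramification of $K(\cR)/K$ by $2$, whereas good reduction over $K$ demands that this extension be unramified. The root-reduction argument above is what bridges this gap, and it is essential that good reduction is being considered over $K$ and not merely over $\Knr$. The remaining BY tree bookkeeping is routine, provided one handles carefully the special merging and removal rules in Definition~\ref{StoTC} for $|\cR|=2g+2$, which suppress the vertex $v_\cR$ that would otherwise spoil the single-vertex conclusion in the sub-case where $\cR$ has a proper child of size $2g+1$.
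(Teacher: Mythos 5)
Your overall framework is sound: recast good reduction as ``$C$ semistable and $T_C$ is a single blue vertex of genus $g$'', using Theorem~\ref{semistability criterion subsection}, Lemma~\ref{genuscurve}, and the dictionary of Definition~\ref{StoTC} and Theorem~\ref{th:BYTreeDualGraph} (small slip: it is the dual graph $\Upsilon_C$, not $\Cmink$ itself, that is the double cover of $T_C$). The converse direction and the derivation of (2) and (3) in the forward direction are correct, including the careful case analysis around the removal rules for $v_\cR$ when $|\cR|=2g+2$.

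The genuine gap is in the forward implication of (1). You assert that good reduction ``identifies $\cR$ with the set of Weierstrass points of the smooth special fibre, which have pairwise distinct images in $\bar k$'', and deduce that the naive residues $\bar r$ of the roots are pairwise distinct, so that a $\sigma\in I_K$ swapping $r,r'$ (which would force $v(r-r')>0$) is impossible. This is false as stated: the given Weierstrass equation need not be a smooth model. For instance, $y^2=p\prod_{i=1}^{5}(x-ip)$ over $\QQ_p$, $p>5$, has good reduction, satisfies (1)--(3) (with $d_\cR=1$, $\nu_\cR=6$), yet all roots reduce to $0\in\bar k$, so the naive reduction on $\cR$ is very far from injective. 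The step $v(r-r')>0$ is also not automatic when $v(r)<0$, which (2)--(3) do not rule out. Two standard repairs are available, and either closes the argument. (i)~Work with the smooth proper model $\cC/\cO_K$: the branch divisor $B\subset\cC/\langle\iota\rangle$ is finite flat over $\cO_K$ with reduced fibres of degree $2g+2$, hence \'etale, so $I_K$ acts trivially on $B(\bar K)=\{\text{Weierstrass points of }C\}$ and hence on $\cR$; this does not reference the chosen $x$-coordinate at all. (ii)~Use what you have already derived: conditions (2)--(3) plus semistability restrict the cluster picture to the two shapes in your case analysis, with a unique principal cluster $\s$ of integral depth admitting a $K$-rational centre $z_\s$ (by \cite[Lemma B.1]{m2d2}); after the normalisation $x\mapsto(x-z_\s)/\pi^{d_\s}$ the roots of $\s$ become units with pairwise distinct residues, and the (possibly) remaining singleton is $G_K$-fixed, so a nontrivial $\sigma\in I_K$ on $\cR$ would contradict triviality of inertia on residues. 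Finally, the closing remark that ``it is essential that good reduction is being considered over $K$ and not merely over $\Knr$'' is off: good reduction over $K$ and over $\Knr$ are equivalent, and either gives $I_K$-triviality; it is potentially good reduction (over $\bar K$) that fails to yield (1).
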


\begin{theorem}[Good reduction of the Jacobian] \label{good red jac}
The Jacobian of $C$  has good reduction if and only if the following three conditions are all satisfied:
\begin{itemize}
\item[(1)] the field extension $K(\mathcal{R})/K$ is unramified,
\item[(2)] every cluster $\s\neq \mathcal{R}$ is odd,
\item[(3)] every principal cluster $\s$ has $\nu_\s\in 2\mathbb{Z}$.
\end{itemize}
\end{theorem}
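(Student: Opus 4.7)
The plan is to combine the semistability criterion of Theorem~\ref{semistability criterion subsection} with a criterion for the toric rank of the N\'eron model to vanish. Recall that the Jacobian of $C$ has good reduction over $K$ if and only if (i) $C$ is semistable, (ii) the toric rank of the N\'eron model of $\mathrm{Jac}(C)$ vanishes, and (iii) the inertia group $I_K$ acts trivially on the special fibre; by N\'eron--Ogg--Shafarevich these are jointly equivalent to the $\ell$-adic Tate module being $I_K$-unramified. I would invoke Theorem~\ref{semistability criterion subsection} to encode~(i).

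For~(ii), I would use the description of the special fibre of $\Cmin$ as the double cover of the BY tree $T_C$ ramified over the blue part, together with Lemma~\ref{genuscurve}, to deduce that the first Betti number of the dual graph --- i.e.\ the toric rank of $\mathrm{Jac}(C)$ --- equals $c_{\mathrm{blue}}(T_C)-1$, where $c_{\mathrm{blue}}$ denotes the number of connected components of the blue part of $T_C$. An inspection of Definition~\ref{StoTC} then shows that the blue part of $T_C$ is connected iff $T_C$ contains no yellow vertex and no yellow edge, which happens iff there are no \ub\ proper clusters and every proper child of a proper cluster is odd. This is precisely condition~(2): every cluster $\s\neq\cR$ is odd. (A yellow vertex or edge always separates some blue vertex from $v_\cR$ in the tree.)

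For~(iii), the tame quadratic ramification allowed by Theorem~\ref{semistability criterion subsection}(1) must be ruled out. Condition~(1) --- that $K(\cR)/K$ is unramified --- is the strengthening that makes every proper cluster automatically $I_K$-invariant and every depth an integer, so that condition~(3) on principal clusters coincides with Theorem~\ref{semistability criterion subsection}(3); together with~(2) this gives semistability, and~(1) moreover forces $I_K$ to fix each root and hence act trivially on every component and node of the special fibre, giving~(iii). Conversely, good reduction of $\mathrm{Jac}(C)$ implies via N\'eron--Ogg--Shafarevich that $I_K$ acts trivially on $\mathrm{Jac}(C)[2]$; by Mumford's description of the 2-torsion in terms of even-cardinality subsets of $\cR$, combined with condition~(2) (which forbids nontrivial even subsets from appearing as proper clusters below $\cR$), this forces $I_K$ to act trivially on $\cR$ itself.

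The main obstacle is establishing the Betti-number formula $b_1=c_{\mathrm{blue}}(T_C)-1$ rigorously: while intuitive from the ramified-double-cover picture, one must carefully account for how yellow components of $T_C$ lift to chains of $\mathbb{P}^1$'s and how each blue connected component contributes exactly one independent cycle to the dual graph beyond a spanning tree. I would carry this out by a direct Euler-characteristic computation on the double cover, cross-checked against Lemma~\ref{genuscurve}.
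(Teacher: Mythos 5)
Your argument reaches the correct conclusion, but the reduction you set up at the outset contains a conceptual misstep worth flagging. You claim that good reduction of $\Jac C$ is equivalent to (i) $C$ semistable, \emph{plus} (ii) vanishing toric rank, \emph{plus} (iii) trivial inertia action on the special fibre, and you then present (iii) as an extra condition needed to ``rule out'' the quadratic ramification allowed by Theorem~\ref{semistability criterion subsection}(1). This is not how the implication works: ``$C$ semistable over $K$'' (the content of Theorem~\ref{semistability criterion subsection}) already means that $I_K$ acts unipotently on $T_\ell\Jac C$, and vanishing toric rank means there is no nontrivial unipotent part, so $I_K$ acts trivially and good reduction follows from N\'eron--Ogg--Shafarevich with no further hypothesis. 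Your (iii) is redundant, not an independent obstruction, and the quadratic ramification permitted by Theorem~\ref{semistability criterion subsection}(1) is excluded automatically once one imposes (ii). (Your $\Leftarrow$ argument remains valid because you verify (i) and (ii) along the way; it is simply more cluttered than it needs to be, and the phrase ``together with~(2) this gives semistability'' is also off --- (2) is not used for semistability.)

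The substantive computations are fine. The Betti-number identity $b_1(\Upsilon_C)=c_{\mathrm{blue}}(T_C)-1$ that you flag as the main obstacle is in fact immediate from Lemma~\ref{genuscurve} together with the standard decomposition $g=(\text{toric rank})+(\text{abelian rank})$ and the observation that the abelian rank equals $\sum_v g(v)$; it can also be obtained from Theorem~\ref{th:BYTreeDualGraph} by an Euler-characteristic computation, or read off directly from Theorem~\ref{potential toric rank}, which is independent of the theorem being proved. Your observation that the blue part of $T_C$ is connected iff every cluster $\ne\cR$ is odd is correct. The $\Rightarrow$ argument via $\Jac C[2]$ also goes through; note, however, that condition~(2) is not actually needed there: for $g\ge 2$ the kernel $\{\emptyset,\cR\}$ of the Mumford map from even-cardinality subsets of $\cR$ to $\Jac C[2]$ is too small to absorb the action of any nontrivial permutation of $\cR$, so trivial inertia on $\Jac C[2]$ directly forces trivial inertia on $\cR$, hence $K(\cR)/K$ unramified. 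Finally, for comparison: the paper itself supplies no proof of this theorem, deferring to \cite[Theorems 1.8, 10.3]{m2d2}, so there is no argument in the text to measure yours against.
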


A consequence of Theorems \ref{good red curve} and \ref{good red jac}  is the following criterion for potentially good reduction.
\vspace{3pt}
\begin{theorem}[Potentially good reduction of the curve or the Jacobian]
~
\begin{itemize}
\item The curve $C$ has potentially good reduction if and only if every proper cluster has size at least $2g+1$.
\item The Jacobian, $\textup{Jac }C$, has potentially good reduction if and only if  every cluster $\s\neq \mathcal{R}$ is odd.
\end{itemize}  
\end{theorem}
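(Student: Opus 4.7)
The plan is to derive both bullet points from Theorems \ref{good red curve} and \ref{good red jac} by analysing how the cluster picture transforms under a finite base change $L/K$. The fundamental observation is that the set of clusters, viewed as subsets of $\cR$, together with all their combinatorial attributes (sizes, parities, containment relations, and hence whether a cluster is \"ubereven, a twin, a cotwin, or principal), is intrinsic to the ultrametric on $\cR$ and is insensitive to base change. Only the numerical data $d_\s$, $\nu_\s$, and the ramification of $K(\cR)/K$ change when we enlarge~$K$.

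The forward implications are then immediate. If $C_L := C \times_K L$ has good reduction over some finite extension $L/K$, then by Theorem \ref{good red curve} every proper cluster in the cluster picture of $C_L$ has size $\ge 2g+1$; since this condition is intrinsic, it also holds for the cluster picture of $C$ over $K$. The Jacobian statement follows in exactly the same way from Theorem \ref{good red jac}.

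For the converse, I would construct an explicit extension $L/K$ over which all three conditions of the relevant good reduction theorem are satisfied. First take $L_0 = K(\cR)$, so that $L_0(\cR) = L_0$ is trivially unramified over $L_0$, giving condition~(1) for any $L \supseteq L_0$. Under a further finite extension $L/L_0$ with ramification index $e$, the normalised valuation scales as $v_L = e \cdot v_{L_0}$ on $L_0^\times$; hence depths scale by $e$, and $\nu_{\s,L} = e \cdot \nu_{\s,L_0}$ for each proper cluster $\s$. Choosing $L = L_0(\pi_{L_0}^{1/N})$ for a sufficiently divisible even integer $N$ (large enough to clear the denominators of the finitely many $\nu_{\s,L_0}$) forces $\nu_{\s,L} \in 2\mathbb{Z}$ for every principal cluster, giving condition~(3). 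Together with the intrinsic condition~(2) coming from the hypothesis, all three conditions of Theorem \ref{good red curve} (respectively Theorem \ref{good red jac}) hold for $C_L$, yielding good reduction of $C_L$ (respectively of $\textup{Jac}(C_L)$) and hence potentially good reduction of $C$ (respectively of $\textup{Jac } C$).

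The only delicate point is checking that the notions of ``proper cluster'', ``principal cluster'', and ``odd cluster'' are genuinely intrinsic to $\cR$, which is what licenses transporting condition~(2) between $K$ and $L$. This reduces to the tautological statement that clusters are precisely the subsets of $\cR$ cut out by ultrametric balls, a collection determined entirely by the ultrametric on $\cR$; only the depth \emph{values} $d_\s$ depend on the normalisation of the valuation. Granted this observation, the remainder of the argument is a routine scaling computation, and I do not expect any substantive obstacle.
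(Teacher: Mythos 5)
Your proof is correct and follows the route the paper itself indicates: the theorem is presented there as ``a consequence of Theorems \ref{good red curve} and \ref{good red jac},'' and your argument simply makes that derivation explicit, via the (correct) observation that the combinatorial cluster data is intrinsic to $\cR$ while depths and $\nu_\s$ merely rescale under a totally ramified base change, so that passing to $K(\cR)$ and then to a suitable totally ramified extension of even degree lands one in the setting of the relevant good-reduction criterion.
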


\begin{theorem}[Potential toric rank of the Jacobian] \label{potential toric rank}\label{th:ptr}
~
\begin{itemize}
\item The potential toric rank of $\textup{Jac }C$ is equal to the number of even non-\ub ~  clusters $\s\neq \mathcal{R}$, less $1$ if $\mathcal{R}$ is \ub. 
\item The Jacobian, $\Jac C$, has potentially totally toric reduction if and only if every cluster has at most $2$ odd children.
\end{itemize}
\end{theorem}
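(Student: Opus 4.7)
The plan is to reduce to the semistable case and then read off both statements from the BY tree. By Theorem~\ref{semistability criterion subsection} and Remark~\ref{field of semistability remark}, $C$ acquires semistable reduction over a finite extension $L/K$. The combinatorial structure of the cluster picture (and hence of $T_C$) is unchanged under this base change, and the potential toric and abelian ranks of $\Jac C$ equal the corresponding ranks for $\Jac C$ over $L$. So we may assume $C$ is semistable.

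For a semistable curve, the toric rank of the N\'eron model of the Jacobian equals the first Betti number $b_1$ of the dual graph $\Gamma$ of the special fibre of the minimal regular model, while the abelian rank equals the sum of geometric genera of the components. The description of this special fibre as a double cover of $T_C$ ramified over the blue part (recalled at the end of Section~\ref{se:BYtree}) identifies $\Gamma$ as the graph obtained by doubling each yellow vertex and yellow edge of $T_C$ while leaving the blue ones alone. Writing $V_B, V_Y, E_B, E_Y$ for the blue and yellow vertex/edge counts of $T_C$, we therefore have $|V(\Gamma)| = V_B + 2V_Y$ and $|E(\Gamma)| = E_B + 2E_Y$. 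Since yellow vertices have degree $\ge 3$ (Definition~\ref{defbytree}), every leaf of $T_C$ is blue, so $\Gamma$ is connected. Combining this with the tree identity $V_B + V_Y - E_B - E_Y = 1$ yields
$$b_1(\Gamma) \;=\; (E_B + 2E_Y) - (V_B + 2V_Y) + 1 \;=\; E_Y - V_Y.$$

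To finish the first assertion, translate to cluster language. The yellow edges of $T_C$ are in bijection with even proper clusters $\s \ne \cR$ (for any such $\s$ the parent $P(\s)$ is automatically a proper cluster), while the yellow vertices are the übereven proper clusters. Since übereven clusters are even, one rearranges
$$E_Y - V_Y \;=\; \#\{\text{even non-übereven clusters }\s \ne \cR\} \;-\; [\cR \text{ übereven}],$$
matching the stated formula. For the second assertion, potentially totally toric reduction is equivalent to every component of the special fibre being rational. By Definition~\ref{StoTC}, a blue vertex $v_\s$ contributes a component of geometric genus $g(v_\s) = \lfloor (|\tilde\s|-1)/2\rfloor$ (cf.\ Example~\ref{ex:nuR}, where the reduced curve at $\s$ is hyperelliptic with branch locus the odd children), which vanishes iff $\s$ has at most two odd children. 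Yellow vertices contribute rational components automatically, and übereven clusters and singletons satisfy the condition trivially, so the global condition is exactly that every cluster has at most two odd children.

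The main obstacle will be handling the two modifications in Definition~\ref{StoTC} when $|\cR| = 2g+2$ and $v_\cR$ (and possibly an incident edge) is removed. In these cases the vertex and edge counts of $T_C$ change, and one must verify by case analysis that $b_1(\Gamma) = E_Y - V_Y$ persists and that the translation to ``even non-übereven clusters, less $1$ if $\cR$ is übereven'' remains valid. The point is that $v_\cR$ is being deleted precisely in configurations where its removal affects both sides of the identity in the same way, so the counts still match.
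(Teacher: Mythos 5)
The paper gives no proof of this statement (it cites \cite{m2d2}, Theorems~1.8 and~10.3), so there is no ``paper proof'' to compare against directly; however your outline --- reduce to semistable, identify $\Upsilon_C$ as the double cover of $T_C$ ramified over the blue part, compute $b_1(\Upsilon_C) = E_Y - V_Y$ by an Euler-characteristic count, and translate back to clusters --- is sound, and is a genuinely different route from Theorem~\ref{thm:homology_of_dual_graph_semistable}, which hands you $H_1(\Upsilon_C,\ZZ)$ in cluster terms directly (free on even non-\"ubereven $\s \ne \cR$, of corank $1$ when $\cR$ is \"ubereven). A slightly quicker variant of your route, which avoids constructing $\Upsilon_C$: Lemma~\ref{genuscurve} gives $g = \#(\text{blue components of }T_C) - 1 + \sum_v g(v)$, the abelian rank is $\sum_v g(v)$, so the potential toric rank is $\#(\text{blue components}) - 1$, and since $T_C$ is a tree this equals $V_B - E_B - 1 = E_Y - V_Y$.

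Two spots need tightening. First, you flag the $|\cR| = 2g+2$ modifications in Definition~\ref{StoTC} as ``the main obstacle'' and then only assert that the bookkeeping works out. It is not automatic: the identification $E_Y = \#\{\text{even proper }\s \ne \cR\}$ that you use is \emph{false} once $v_\cR$ is deleted. When $\cR = \s_1 \sqcup \s_2$ with both $\s_i$ even (so $\cR$ is \"ubereven and $v_\cR$ would have been a yellow vertex), two yellow edges merge into one, so $E_Y$ is one smaller than the naive count \emph{and} $V_Y$ is one smaller; the difference is preserved only because the two corrections cancel, and this must be said. The other modified cases (both $\s_i$ odd, or $\cR$ has a child of size $2g+1$) leave both $E_Y$ and $V_Y$ unaffected, and $\cR$ is then non-\"ubereven, so the formula is also correct there --- but this needs to be written out. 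Second, for the second bullet, ``\"ubereven clusters and singletons satisfy the condition trivially'' does not exhaust the clusters outside your genus criterion. Twins and cotwins are non-\"ubereven and non-singleton (though they do give blue vertices with $|\tilde\s|\le 2$, so they pass automatically), and more importantly, in the $|\cR|=2g+2$ cases $v_\cR$ is absent from $T_C$ altogether, so $\cR$ is never tested by the condition on blue vertices. In those configurations $\cR$ has exactly two children (or one proper child of size $2g+1$ plus one singleton), hence at most two odd children, but this must be checked explicitly for the stated equivalence to hold.
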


\begin{remark}[Tame reduction] \label{tame remark}
The curve $C$, or equivalently $\textup{Jac }C$, has tame reduction (semistable after  tamely ramified  extension) if and only if $K(\mathcal{R})/K$ is tamely ramified. In particular, this is always the case if $p>2g+1$  since then the wild inertia group acts trivially on the roots of the (degree $\leq 2g+2$) polynomial $f(x)$.  
 \end{remark}
 
\newpage

\begin{example} \label{RTex1}
As in Example \ref{CLex1} we consider the genus $2$ hyperelliptic curve \[C : y^2=(x^2+7^2)(x^2-7^{15})(x-7^6)(x-7^6-7^9)\]  over $\Q_7$ with cluster picture
\vskip -8pt
$$
 \clusterpicture            
  \Root[A] {} {first} {r1};
  \Root[A] {} {r1} {r2};
  \Root[A] {2} {r2} {r3};
  \Root[A] {} {r3} {r4};
  \Root[A] {4} {r4} {r5};
  \Root[A] {} {r5}{r6};
  \ClusterLDName c1[][\frac 32][\t_1] = (r3)(r4);
  \ClusterLDName c2[][3][\t_2] = (r5)(r6);
  \ClusterLDName c3[][5][\mathfrak{s}] = (c1)(c2);
  \ClusterLDName c4[][1][\cR] = (r1)(r2)(c3);
\endclusterpicture 
\qquad \text{and }
\cR = \{7i, -7i, 7^{\frac{15}{2}}, -7^{\frac{15}{2}}, 7^6, 7^6+7^9\}.
$$

We have $d_\mathcal{R}=1$. The single principal cluster $\mathfrak{s}$ has $d_\mathfrak{s}=6$ and $|\mathfrak{s}|=4$. We find:
\begin{itemize}
\item $C$ is semistable. 
Indeed, $\mathbb{Q}_7(\mathcal{R})=\mathbb{Q}_7(i,\sqrt{7})$  has ramification degree $2$ over $\mathbb{Q}_7$. The inertia group swaps the roots  $7^{\frac{15}{2}}$ and $-7^{\frac{15}{2}}$ which lie in a twin, and fixes all others, so that every proper cluster is fixed by inertia. Finally,  $d_\mathfrak{s}\in \mathbb{Z}$ and 
$\nu_\mathfrak{s}=4\cdot d_\mathfrak{s}+2d_\mathcal{R}=26\in 2\mathbb{Z}.$
\item $C$ does not have potentially good reduction since the cluster $\mathfrak{s}$ has size $4<2g+1=5$. In fact, $\Jac C$ has totally toric reduction. Indeed, $C$ is already semistable over $\mathbb{Q}_7$, and every cluster has at most $2$ odd children ($\mathcal{R}$ and the twins $\mathfrak{t}_1$ and $\mathfrak{t}_2$ each have two odd children, whilst   $\mathfrak{a}$ has no odd children). 
\end{itemize}
\end{example}

\begin{remark}
Any hyperelliptic curve $C:y^2=f(x)$ with the same cluster picture as the one in Example \ref{RTex1} (same depths, all proper clusters inertia invariant) and such that $f(x)$ has unit leading coefficient,  is necessarily also semistable with totally toric reduction, by the same argument.
\end{remark}

\begin{example}
Consider the genus $2$ hyperelliptic curve $C:y^2=x^6-27$
over $\mathbb{Q}_3$. Its cluster picture is 
\vskip -8pt
$$
 \clusterpicture            
  \Root[A] {2} {first} {r1};
  \Root[A] {2} {r1} {r2};
  \Root[A] {2} {r2} {r3};
  \Root[A] {4} {r3} {r4};
  \Root[A] {2} {r4} {r5};
  \Root[A] {2} {r5}{r6};
  \ClusterLDName c1[][\frac{1}{2}][\s_1] = (r1)(r2)(r3);
  \ClusterLDName c2[][\frac{1}{2}][\s_2] = (r4)(r5)(r6);
  \ClusterLDName c3[][\frac{1}{2}][\cR] = (c1)(c2);
\endclusterpicture 
\qquad \text{with }~~
\cR = \{\sqrt{3},\zeta_3\sqrt{3},\zeta_3^2\sqrt{3},-\sqrt{3},-\zeta_3\sqrt{3},-\zeta_3^2\sqrt{3}\},
$$
for a fixed  primitive $3$rd root of unity $\zeta_3$. The non-principal cluster $\mathcal{R}$ has depth $\frac{1}{2}$, whilst the principal clusters $\s_1$ and $\s_2$ each have depth $1$.   We find:
\begin{itemize}
\item $C$ is not semistable since the action of inertia swaps $\s_1$ and $\s_2$. 
\item $C$ does not have potentially good reduction, since $\s_1$ and $\s_2$ are both proper clusters of size $<2g+1$. On the other hand, $\textup{Jac }C$ \textit{does} have potentially good reduction since $\s_1$ and $\s_2$  are odd.
\item $C$ has tame reduction since $\mathbb{Q}_3(\mathcal{R})=\mathbb{Q}_3(\sqrt{3},\zeta_3)$ has ramification degree $2$ over $\mathbb{Q}_3$. In fact, the minimal degree extension over which $C$ is semistable is $4$, realised by any totally ramified  extension of this degree. To see this, note that the inertia group acts on the proper clusters through it's unique order $2$ quotient, whilst for $i=1,2$ we have $d_{\s_i}\in \mathbb{Z}$ and $\nu_{\s_i}=3\cdot 1+3\cdot d_\mathcal{R}=9/2$, so that $C$ satisfies  the semistability criterion (Theorem \ref{semistability criterion subsection}) over some $F/\mathbb{Q}_3$ if and only if the ramification degree of this extension is divisible by $4$. 
\end{itemize}

\end{example}

\newpage
\section{Special fibre (semistable case)}
\label{se:specialfibre}
\newcommand{\Sectionsixcomment}[1]{}

In this section, assuming that $C/K$ is \underline{semistable} and that \underline{$\mathcal{R}$ is principal}, we describe the special fibre of the minimal regular model  of $C$ over $\mathcal{O}_{K^{\textup{nr}}}$. The case where $\cR$ is not principal is dealt with in \cite[Section 8]{m2d2}. 
\begin{definition}[Leading terms and reduction maps]
For a principal cluster $\s$, define $c_\s\in \bar{k}^{\times}$ and $\operatorname{red}_{\mathfrak{s}}\colon z_\s+\pi^{d_\s}\mathcal{O}_{\bar{K}}\rightarrow \bar{k}$ by
\begin{equation*}
    c_{\mathfrak{s}}= \frac{c}{\pi^{v(c)}} \prod_{r \notin \mathfrak{s}}\frac{z_{\mathfrak{s}}-r}{\pi^{v(z_\mathfrak{s}-r)}} \bmod \mathfrak{m} \,\qquad \text{and} \qquad \operatorname{red}_{\mathfrak{s}}(t)=\frac{t-z_{\mathfrak{s}}}{\pi^{d_{\mathfrak{s}}}} \quad \bmod \mathfrak{m}\text.
\end{equation*}
For any cluster $\mathfrak s' < \mathfrak s$  we  define $\operatorname{red}_{\mathfrak s} (\mathfrak  s')$ to be $\operatorname{red}_{\mathfrak s}(r)$ for any choice of $r \in \mathfrak s'$.
\end{definition}

\begin{theorem}[Components] \label{section 6 big theorem}
    The special fibre $\Cmink$ contains connected components $\Gamma _{\mathfrak s}$ corresponding to principal clusters $\mathfrak s$, given by the equations \Sectionsixcomment{ If $\delta_{\mathfrak s} \ne \frac 12$ it can be written as}
    \begin{equation*}
        \Gamma_{\mathfrak{s}}: Y^{2}=c_{\mathfrak{s}} \prod_{\text {odd } \mathfrak{o}<\mathfrak{s}}\left(X-\operatorname{red}_{\mathfrak{s}}(\mathfrak{o})\right) \prod_{\substack{\text {twin } \mathfrak{t}<\mathfrak{s}\\ \delta_{\mathfrak t} = \frac12}}\left(X-\operatorname{red}_{\mathfrak{s}}(\mathfrak{t})\right)^{2}.	
    \end{equation*}
    This component is irreducible when $\mathfrak s$ is non-\"ubereven but splits into a pair of irreducible components $\Gamma _{\mathfrak s}^+,\Gamma _{\mathfrak s}^-$ otherwise (we write $\Gamma_{\mathfrak s}^+ = \Gamma_{\mathfrak s}^- = \Gamma_{\mathfrak s}$ in the non-\"ubereven case). These components are linked by chains of $\P^1$s  as described in Theorem \ref{thm:links}.
\end{theorem}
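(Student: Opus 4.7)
The plan is to derive the equation of $\Gamma_\mathfrak{s}$ by an explicit local change of coordinates centred at the principal cluster $\mathfrak{s}$, followed by a case-by-case resolution analysis of the resulting singularities. First I would apply the substitution $X = (x - z_\mathfrak{s})/\pi^{d_\mathfrak{s}}$ and $Y = y/\pi^{\nu_\mathfrak{s}/2}$, following the pattern of Example \ref{ex:nuR}. By the semistability hypotheses of Theorem \ref{semistability criterion subsection} we have $d_\mathfrak{s} \in \mathbb{Z}$ and $\nu_\mathfrak{s}/2 \in \mathbb{Z}$, and condition (2) lets us pick $z_\mathfrak{s} \in \mathcal{O}_{K^{\text{nr}}}$. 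Each factor $(x - r)$ then becomes, up to a power of $\pi$, either a unit with reduction $(z_\mathfrak{s} - r)/\pi^{v(z_\mathfrak{s} - r)} \bmod \mathfrak{m}$ when $r \notin \mathfrak{s}$, or $X - (r - z_\mathfrak{s})/\pi^{d_\mathfrak{s}}$ when $r \in \mathfrak{s}$. The accumulated power of $\pi$ is exactly $v(c) + |\mathfrak{s}|d_\mathfrak{s} + \sum_{r \notin \mathfrak{s}} d_{\{r\}\wedge\mathfrak{s}} = \nu_\mathfrak{s}$, matching the substitution for $Y$ and giving an integral equation over $\mathcal{O}_{K^{\text{nr}}}$.

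Next I would reduce modulo $\pi$ and group factors by children of $\mathfrak{s}$. Any two roots in a common proper child $\mathfrak{s}'$ satisfy $v(r_1 - r_2) \geq d_{\mathfrak{s}'} > d_\mathfrak{s}$, so they share the same reduction $\operatorname{red}_\mathfrak{s}(\mathfrak{s}')$. The unit contributions from roots outside $\mathfrak{s}$ collapse to $c_\mathfrak{s}$ by definition, and the affine special fibre therefore has equation
\[
Y^2 = c_\mathfrak{s} \prod_{\text{singleton } r < \mathfrak{s}}(X - \operatorname{red}_\mathfrak{s}(r)) \prod_{\text{proper } \mathfrak{s}' < \mathfrak{s}}(X - \operatorname{red}_\mathfrak{s}(\mathfrak{s}'))^{|\mathfrak{s}'|}.
\]
This is an affine chart of a scheme over $\mathcal{O}_{K^{\text{nr}}}$, but not yet of the minimal regular model: each repeated factor corresponds to a singular point whose resolution must be tracked.

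The core of the proof, and the main technical obstacle, is the resolution analysis that determines which factors survive on the proper transform $\Gamma_\mathfrak{s}$. I would split into cases according to the type of proper child $\mathfrak{s}'$. For a twin $\mathfrak{t}$ with $\delta_\mathfrak{t} = \tfrac12$, the two roots lie in a ramified quadratic extension of $K^{\text{nr}}$ and the integral factor is $(X - \alpha)^2 - \pi u$ for a unit $u$; the resulting equation is already regular at $(X,Y) = (\alpha,0)$, with a nodal special fibre, so the factor $(X - \operatorname{red}_\mathfrak{s}(\mathfrak{t}))^2$ is preserved on $\Gamma_\mathfrak{s}$. For a twin with $\delta_\mathfrak{t} \in \mathbb{Z}_{\geq 1}$, and more generally for any proper even child of larger size, the singularity is of type $A_n$ and its resolution produces a chain of $\mathbb{P}^1$s linking $\Gamma_\mathfrak{s}$ to another component, exactly as in Theorem \ref{thm:links}; the corresponding factor is absorbed into the link and disappears from $\Gamma_\mathfrak{s}$. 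For a proper odd child $\mathfrak{o}$ of size $\geq 3$ an analogous blow-up transfers all but one linear factor of $(X - \operatorname{red}_\mathfrak{s}(\mathfrak{o}))^{|\mathfrak{o}|}$ to the chain connecting $\Gamma_\mathfrak{s}$ to $\Gamma_\mathfrak{o}$, leaving exactly $(X - \operatorname{red}_\mathfrak{s}(\mathfrak{o}))$ on $\Gamma_\mathfrak{s}$. Assembling these contributions reproduces the stated equation.

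Finally, the irreducibility statement follows from direct inspection. If $\mathfrak{s}$ is non-übereven, then either $\mathfrak{s}$ is odd, or it has an odd child; either way, the right-hand side contains a linear factor with odd multiplicity, so it is not a square in $\bar{k}[X]$ and $\Gamma_\mathfrak{s}$ is absolutely irreducible. If $\mathfrak{s}$ is übereven, then it has no odd children, and every surviving factor is a twin square, so the equation takes the form $Y^2 = c_\mathfrak{s} Q(X)^2$ for some $Q \in \bar{k}[X]$; since $c_\mathfrak{s}$ is a square in $\bar{k}$, this factors as $(Y - \sqrt{c_\mathfrak{s}}\,Q(X))(Y + \sqrt{c_\mathfrak{s}}\,Q(X)) = 0$, yielding the pair of components $\Gamma_\mathfrak{s}^{\pm}$.
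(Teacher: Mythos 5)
The paper does not prove this theorem: it is listed among the results imported from \cite{m2d2} (the references for Section 6 cite \cite[Def.~8.4, Thm.~8.5]{m2d2}), and the introduction explicitly says that only Theorem~\ref{thm:discriminant_by} is proved in this article. So there is no in-paper argument to compare against; what follows assesses your sketch on its own merits.

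Your first two steps are correct and match the machinery the paper sets up elsewhere (Example~\ref{ex:nuR} and Section~\ref{se:MinRegMod}): the rescaling $X=(x-z_\s)/\pi^{d_\s}$, $Y=y/\pi^{\nu_\s/2}$ is legitimate because semistability gives $d_\s\in\Z$, $\nu_\s\in 2\Z$, and an inertia-stable $z_\s$, and reducing modulo $\pi$ produces $Y^2=c_\s\prod_{r\in\s}(X-\mathrm{red}_\s(r))$, with distinct children of $\s$ reducing to distinct points. Your irreducibility argument at the end is also correct: a non-\"ubereven $\s$ always has at least one odd child (odd size is a sum of children's sizes, so an odd child exists when $|\s|$ is odd, and by definition when $|\s|$ is even), so the right side has a linear factor of odd multiplicity and is not a square, while in the \"ubereven case the right side is $c_\s Q(X)^2$, which splits over $\bar k$.

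The weak point is the resolution step, and in two places what you write is actually false as stated, even though the conclusion you draw is right. First, for a proper even child $\s'$ of size $\ge 4$ the surface singularity at $(\mathrm{red}_\s(\s'),0)$ is not of type $A_n$: an $A_n$ singularity comes from a double root, whereas here you have a root of multiplicity $|\s'|\ge 4$ (possibly with further substructure), which is a more complicated singularity whose resolution in particular produces the new central component $\Gamma_{\s'}$, not just a chain. Second, for a twin $\t<\s$ with $\delta_\t\ge 1$ the chain of $2\delta_\t-1$ lines does not link $\Gamma_\s$ ``to another component'': both ends land on $\Gamma_\s$ (or on $\Gamma_\s^{\pm}$ in the \"ubereven case), exactly as Theorem~\ref{thm:links} records. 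You also write $\delta_\t\in\Z_{\ge1}$, which omits half-integral $\delta_\t>\tfrac12$; those occur and are handled the same way. More substantively, the claim that ``all but one linear factor is transferred to the chain'' is a heuristic, not an argument, and carrying out the blow-ups for general clusters is exactly the technical content of \cite[Thm.~8.5]{m2d2}. A cleaner way to finish, consistent with the rest of the theorem, is to observe that the passage from $Y^2=c_\s\prod_{\s'<\s}(X-\mathrm{red}_\s(\s'))^{|\s'|}$ to the stated equation is precisely the substitution $Y\mapsto Y\cdot\prod_{\s'<\s,\ \delta_{\s'}>1/2}(X-\mathrm{red}_\s(\s'))^{-\lfloor|\s'|/2\rfloor}$, which is the $Y$-part of the reduction-map formula~(\ref{formula for the reduction}); one then only has to check that the proper-transform chart on the blow-up is the one using this rescaled $Y$-coordinate, rather than attempting to classify the singularity types case by case.
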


\Sectionsixcomment{
\begin{remark}
In case $\delta_{\mathfrak s} = \frac 12$, the point at infinity needs to be a ramification point of the map to $\P^1$ and a slightly different model is needed, see \cite[Section 8]{m2d2}.
\end{remark}
}

\begin{theorem}[Links]\label{thm:links}
	The chains of $\mathbb P^1$s linking the irreducible components of Theorem \ref{section 6 big theorem}  arise   in exactly one of the following four ways:

    If $\mathfrak s' < \mathfrak s$ with both clusters principal and $\mathfrak s'$ is odd, we have a chain containing $\frac12 \delta _{\mathfrak s'}-1$ components, linking $\Gamma_{\mathfrak s}$ to $\Gamma_{\mathfrak s'}$.
    If $\mathfrak s' < \mathfrak s$ with both clusters principal and $\mathfrak s'$ even, we have two chains containing $\delta _{\mathfrak s'}-1$ components each, one linking $\Gamma_{\mathfrak s}^+$ to $\Gamma_{\mathfrak s'}^+$ and the other $\Gamma_{\mathfrak s}^-$ to $\Gamma_{\mathfrak s'}^-$.
    If $\mathfrak t < \mathfrak s$ with $\mathfrak s$ principal and $\mathfrak t$ a twin, we have a chain containing $2\delta _{\mathfrak t}-1$ components, linking $\Gamma_{\mathfrak s}^+$ to $\Gamma_{\mathfrak s}^-$.
    \Sectionsixcomment{
    {\color{red} If $\mathfrak s < \mathfrak t$, with $\mathfrak s$ principal and $\mathfrak t$ a cotwin, we have a chain containing $2\delta _{\mathfrak s}$ components, linking $\Gamma_{\mathfrak s}^+$ to $\Gamma_{\mathfrak s}^-$. }
    
   {\color{red} If $\mathcal{R} = \mathfrak{s}_1 \sqcup \mathfrak{s}_2$ with $\mathfrak{s}_1, \mathfrak{s}_2$ principal odd, we have a chain of length $\tfrac12(\delta_{\mathfrak{s}_1} + \delta_{\mathfrak{s}_2})$ linking $\Gamma_{\mathfrak{s}_1}$ and $\Gamma_{\mathfrak{s}_2}$.
   If $\mathcal{R} = \mathfrak{s}_1 \sqcup \mathfrak{s}_2$ with $\mathfrak{s}_1, \mathfrak{s}_2$ principal even, we have two chain of length $\delta_{\mathfrak{s}_1} + \delta_{\mathfrak{s}_2}$, one linking $\Gamma_{\mathfrak{s}_1}^+$ and $\Gamma_{\mathfrak{s}_2}^+$, the other linking $\Gamma_{\mathfrak{s}_1}^-$ and $\Gamma_{\mathfrak{s}_2}^-$.
   If $\mathcal{R} = \mathfrak{s}_1 \sqcup \mathfrak{s}_2$ with $\mathfrak{s}_1$ principal even and $\mathfrak{s}_2$ a twin, we have a chain of length $2(\delta_{\mathfrak{s}_1} + \delta_{\mathfrak{s}_2})$,  linking $\Gamma_{\mathfrak{s}_1}^+$ and $\Gamma_{\mathfrak{s}_1}^-$.}}
\end{theorem}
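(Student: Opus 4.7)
The plan is to construct the minimal regular model chart by chart, using one affine chart per principal cluster, and then to identify and resolve the singularities introduced by the gluing and by the twin factors in the defining equations.

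For each principal cluster $\mathfrak{s}$, the substitution $x = z_\mathfrak{s} + \pi^{d_\mathfrak{s}} X$, $y = \pi^{\tilde\lambda_\mathfrak{s}} Y$ (as in Example \ref{ex:nuR}) transforms $y^2 = f(x)$ into an integral equation whose reduction is the component $\Gamma_\mathfrak{s}$ of Theorem \ref{section 6 big theorem}; semistability guarantees $\tilde\lambda_\mathfrak{s} \in \mathbb{Z}$ and that all exponents are correct. For an inclusion $\mathfrak{s}' < \mathfrak{s}$ of principal clusters, the $\mathfrak{s}'$-chart sits over the $\mathfrak{s}$-chart via $X = \operatorname{red}_\mathfrak{s}(\mathfrak{s}') + \pi^{\delta_{\mathfrak{s}'}} X'$, together with a rescaling of $Y$. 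Computing the local equation of the glued model at the closed point $(X,\pi) = (\operatorname{red}_\mathfrak{s}(\mathfrak{s}'), 0)$ yields the shape $Y^2 = (\text{unit}) (X - \operatorname{red}_\mathfrak{s}(\mathfrak{s}'))^{|\mathfrak{s}'|} + (\text{terms of }\pi\text{-valuation}\ge \delta_{\mathfrak{s}'}|\mathfrak{s}'|)$, an $A$-type surface singularity.

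Resolving this singularity by iterated blow-ups and reading off the chain length is the core of the argument. When $\mathfrak{s}'$ is odd the closed point is a ramification point of the double cover $\Gamma_\mathfrak{s}\to\mathbb{P}^1$, and the standard computation of the resolution of the resulting $A$-singularity yields a single chain of $\tfrac12\delta_{\mathfrak{s}'} - 1$ components (the factor of $\tfrac12$ coming precisely from the ramification, which forces $\delta_{\mathfrak{s}'}$ to be even under semistability). When $\mathfrak{s}'$ is even the closed point is unramified, so the singularity decouples into two symmetric $A_{\delta_{\mathfrak{s}'}-1}$ singularities on the two sheets of the cover, each resolved by a chain of $\delta_{\mathfrak{s}'}-1$ components linking $\Gamma_\mathfrak{s}^\pm$ to $\Gamma_{\mathfrak{s}'}^\pm$. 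For a twin $\mathfrak{t} < \mathfrak{s}$, the double factor $(X - \operatorname{red}_\mathfrak{s}(\mathfrak{t}))^2$ appearing in the equation of $\Gamma_\mathfrak{s}$ produces a node in the special fibre whose deformation has $\pi$-valuation $2\delta_\mathfrak{t}$; resolving the resulting $A_{2\delta_\mathfrak{t}-1}$ singularity contributes a chain of $2\delta_\mathfrak{t}-1$ components which, because the two sheets above $X = \operatorname{red}_\mathfrak{s}(\mathfrak{t})$ are interchanged by the hyperelliptic involution, connects $\Gamma_\mathfrak{s}^+$ to $\Gamma_\mathfrak{s}^-$.

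The final step is to verify that the resulting model is minimal: each exceptional $\mathbb{P}^1$ in a chain has self-intersection $-2$ by the standard $A_n$-resolution computation, so no contractions are possible, and each $\Gamma_\mathfrak{s}$ has positive arithmetic genus by Lemma \ref{genuscurve} (since $\mathfrak{s}$ being principal forces $|\tilde{\mathfrak{s}}|\ge 3$) and hence is never a $(-1)$-curve. The main obstacle is the explicit local computation of the singularity type in each of the three cases, in particular tracking how the valuations of $\prod_{r\notin\mathfrak{s}'}(z_{\mathfrak{s}'}-r)$ combine with those of the inner factors under the substitutions, and correctly distinguishing the ramified from the unramified case --- this is what produces the $\tfrac{1}{2}$ in the odd-principal formula. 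A small final check is that, under the semistability hypotheses (Theorem \ref{semistability criterion subsection}), the depths $\delta_{\mathfrak{s}'}$ indeed satisfy the integrality and positivity conditions needed for the chain-length formulas to give non-negative integers.
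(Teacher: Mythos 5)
The paper itself gives no proof of Theorem~\ref{thm:links}: the introduction explicitly states that no proofs are given except for Theorem~\ref{thm:discriminant_by}, and the references at the end of Section~\ref{se:specialfibre} point to [M$^2$D$^2$, Def.~8.4, Thm.~8.5]. The argument underlying that reference (and reflected in Section~\ref{se:MinRegMod} of this paper) is the ``disc model'' $\cC^{\mathrm{disc}}$: one builds a chart $\mathcal{U}_D$ for \emph{every} valid integral disc $D$ (not just those cutting out principal clusters), plus linking charts $\mathcal{W}_D$, producing a scheme that is \emph{already regular}. The chains of $\mathbb{P}^1$s are literally the components coming from the intermediate discs with $d_\s < d_D < d_{\s'}$, and the minimal model is obtained by contracting the multiplicity-$2$ $(-1)$-curves arising whenever $\nu_D(f)$ is odd (Remark~\ref{rem:C_disc_not_minimal}). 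Your proposal instead builds one chart per principal cluster, glues, and then resolves the singularities at the gluing points by blow-ups. This is a genuinely different route --- closer in spirit to Liu's normalization-and-resolution approach --- and should work in principle; the disc model's virtue is precisely that it sidesteps the explicit singularity analysis by starting from a regular scheme, so the count of exceptional curves becomes combinatorics of discs rather than a blow-up calculation.

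Several steps of your sketch have concrete gaps. First, the substitution in Example~\ref{ex:nuR} is $y=\pi^{\nu_\s/2}y'$, not $y=\pi^{\tilde\lambda_\s}Y$; semistability forces $\nu_\s/2\in\Z$ for principal $\s$, but $\tilde\lambda_\s$ (which involves $|\tilde\s|$ rather than $|\s|$) is generally not an integer, so the integrality claim is attached to the wrong quantity. Second, the minimality argument is incorrect: a principal cluster need not have $|\tilde\s|\ge 3$ --- indeed a principal \"ubereven cluster has $\tilde\s=\varnothing$ and hence $g(v_\s)=0$ --- so $\Gamma_\s$ (or $\Gamma_\s^{\pm}$) can well be a rational curve, and the correct reason it is not a $(-1)$-curve is that it meets at least two other components of the fibre, forcing self-intersection $\le -2$ by adjunction, not that it has positive genus. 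Third, the assertion that resolving the local singularity yields exactly $\frac12\delta_{\s'}-1$ (resp.\ $\delta_{\s'}-1$, $2\delta_\t-1$) components is stated rather than established: the singular point is not a du Val $A_n$-singularity in the naive sense, and a careful resolution in fact produces intermediate exceptional curves of multiplicity $2$ (those are exactly the ones the disc model flags via $\omega_D(f)=1$) whose subsequent contraction changes the chain length and must be tracked. Finally, the parity fact that makes $\frac12\delta_{\s'}-1$ an integer is correct but should be proved, not asserted: from $\nu_\s,\nu_{\s'}\in 2\Z$ and the identity $\nu_{\s'}-\nu_\s=|\s'|\,\delta_{\s'}$ (which holds because $\{r\}\wedge\s'=\{r\}\wedge\s$ for $r\notin\s$, while $\{r\}\wedge\s'=\s$ for $r\in\s\setminus\s'$), one gets $|\s'|\,\delta_{\s'}\in 2\Z$, hence $\delta_{\s'}$ even when $|\s'|$ is odd.
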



\begin{theorem}[Frobenius action]
    The Frobenius element $\Frob$ acts by permutation on the components of  $\Cmink$ by sending $\Gamma_{\mathfrak{s}}^\pm$ to $\Gamma_{\Frob(\mathfrak{s})}^{\pm \epsilon_{\mathfrak s}(\Frob)}$.
\end{theorem}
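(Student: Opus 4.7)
The plan is to trace how the Frobenius element $\Frob$ acts through the explicit construction of the components $\Gamma_{\mathfrak{s}}$ in Theorem~\ref{section 6 big theorem}, paying careful attention to the sign labelling in the übereven case.

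First I would observe that $\Frob$ preserves the valuation $v$ and the distance function $v(r-r')$ on $\mathcal{R}$, and hence permutes the clusters preserving depths, containments, and all the combinatorial designations (proper, twin, übereven, principal, etc.). In particular $\Frob(\mathfrak{s})$ is again principal whenever $\mathfrak{s}$ is, so the target $\Gamma_{\Frob(\mathfrak{s})}$ is well-defined. Next I would unravel the construction underlying Theorem~\ref{section 6 big theorem}: the chart in which $\Gamma_{\mathfrak{s}}$ appears comes from the substitution $x = z_{\mathfrak{s}} + \pi^{d_{\mathfrak{s}}} X$, $y = \pi^{\tilde\lambda_{\mathfrak{s}}} Y$, yielding (after reduction) an equation whose data is $\{c_{\mathfrak{s}},\, \operatorname{red}_{\mathfrak{s}}(\mathfrak{o}),\, \operatorname{red}_{\mathfrak{s}}(\mathfrak{t})\}$. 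Applying $\Frob$ to every coefficient, to $z_{\mathfrak{s}}$, and to the roots of $f$ produces the analogous chart with centre $\Frob(z_{\mathfrak{s}})$, which is a legitimate centre for $\Frob(\mathfrak{s})$ since $\Frob$ preserves distances. By the $G_K$-equivariance of the resolution, $\Frob$ therefore sends the scheme $\Gamma_{\mathfrak{s}}$ onto $\Gamma_{\Frob(\mathfrak{s})}$. This already handles the non-übereven case, where there is no $\pm$ distinction to make.

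For übereven $\mathfrak{s}$, the defining equation
\[
Y^2 = c_{\mathfrak{s}} \prod_{\substack{\text{twin }\mathfrak{t}<\mathfrak{s}\\\delta_{\mathfrak{t}}=1/2}} (X - \operatorname{red}_{\mathfrak{s}}(\mathfrak{t}))^2
\]
has a perfect-square right-hand side up to the constant $c_{\mathfrak{s}}$, and so splits into two branches $Y = \pm \sqrt{c_{\mathfrak{s}}} \cdot \prod (X - \operatorname{red}_{\mathfrak{s}}(\mathfrak{t}))$. I would then unpack the definitions to write
\[
c_{\mathfrak{s}} \equiv \frac{\theta_{\mathfrak{s}}^{2}}{\pi^{2v(\theta_{\mathfrak{s}})}} \pmod{\mathfrak{m}},
\]
so that the natural square root is $\sqrt{c_{\mathfrak{s}}} \equiv \theta_{\mathfrak{s}}/\pi^{v(\theta_{\mathfrak{s}})} \bmod \mathfrak{m}$. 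By convention $\Gamma_{\mathfrak{s}}^{+}$ is the branch associated to the $+$ sign of this distinguished square root, which intrinsically depends on the chosen $\theta_{\mathfrak{s}}$.

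With this convention in place, I would compute $\Frob$ on the distinguished square root. By the very definition of $\epsilon_{\mathfrak{s}}$ (together with Remark~\ref{CLrem2} that it is independent of the choice of centre in the even case),
\[
\Frob(\theta_{\mathfrak{s}}) \equiv \epsilon_{\mathfrak{s}}(\Frob) \cdot \theta_{\Frob(\mathfrak{s})} \pmod{\mathfrak{m}},
\]
and since valuations are $\Frob$-invariant, the image of $\Gamma_{\mathfrak{s}}^{+}$ is the branch of $\Gamma_{\Frob(\mathfrak{s})}$ cut out by the square root $\epsilon_{\mathfrak{s}}(\Frob) \cdot \theta_{\Frob(\mathfrak{s})}/\pi^{v(\theta_{\Frob(\mathfrak{s})})}$. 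By the labelling convention this is precisely $\Gamma_{\Frob(\mathfrak{s})}^{+\epsilon_{\mathfrak{s}}(\Frob)}$. The main obstacle is the sign bookkeeping: one must check that the various choices (of centre $z_{\mathfrak{s}}$, of square root $\theta_{\mathfrak{s}}$, and of a compatible $\theta_{\Frob(\mathfrak{s})}$) do not affect the final statement. This reduces to the independence of $\epsilon_{\mathfrak{s}}$ from the centre in the even case and to the fact that the two ambiguities in $\theta_{\mathfrak{s}}$ and $\theta_{\Frob(\mathfrak{s})}$ cancel against the corresponding flips of the $\pm$ labels.
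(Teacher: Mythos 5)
Your overall strategy is the right one: trace $\Frob$ through the construction of the component charts, and in the \"ubereven case reduce the sign bookkeeping to a choice of square root of $c_\s$.  The non-\"ubereven case, and the identification $c_\s\equiv\theta_\s^2/\pi^{2v(\theta_\s)}\bmod\mathfrak m$, are both handled correctly.

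However, there is a genuine gap in the step you flag as following ``by the very definition of $\epsilon_\s$.''  The paper defines
\[
\epsilon_\s(\sigma)=\frac{\sigma(\theta_{\s^*})}{\theta_{(\sigma\s)^*}}\bmod\mathfrak m,
\]
with $\theta_{\s^*}$, not $\theta_\s$, where $\s^*$ is the smallest cluster $\supseteq\s$ without an \"ubereven parent.  You instead use $\theta_\s$ throughout, both to label $\Gamma_\s^{\pm}$ (via the square root $\theta_\s/\pi^{v(\theta_\s)}$ of $c_\s$) and in the asserted formula $\Frob(\theta_\s)\equiv\epsilon_\s(\Frob)\cdot\theta_{\Frob(\s)}$.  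That formula does not follow from the definition unless $\s^*=\s$, which happens exactly when $P(\s)$ is non-\"ubereven.  But a principal \"ubereven cluster can certainly have an \"ubereven parent --- for example, in genus $3$ take $\cR$ of size $8$ with children a size-$4$ \"ubereven cluster $\s$ and two twins; then $\cR$ is \"ubereven, $\s$ is principal and \"ubereven, and $\s^*=\cR\neq\s$.  In that situation $\theta_\s$ and $\theta_{\s^*}$ differ by a genuinely new factor, and $\Frob(\theta_\s)/\theta_{\Frob(\s)}$ need not equal $\Frob(\theta_{\s^*})/\theta_{(\Frob\s)^*}$ without further argument.  This is not a cosmetic point: the $\pm$ labels of the various $\Gamma_{\s'}$ with the same $\s'^*$ must be chosen coherently so that the linking chains of Theorem~\ref{thm:links} respect signs, and using $\theta_{\s^*}$ (rather than $\theta_\s$) is precisely what makes that coherence automatic.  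Your final paragraph on ``the various choices'' discusses centres and square-root ambiguities but does not touch the $\s$-versus-$\s^*$ discrepancy, so it does not close the gap.  To repair the argument you would need either to (a) define the labeling of $\Gamma_\s^{\pm}$ directly in terms of $\theta_{\s^*}$ and show it is compatible with the equation $Y^2=c_\s\prod(X-\operatorname{red}_\s(\t))^2$, or (b) prove the auxiliary claim $\Frob(\theta_\s)/\theta_{\Frob(\s)}\equiv\Frob(\theta_{\s^*})/\theta_{(\Frob\s)^*}\bmod\mathfrak m$ for principal \"ubereven $\s$.

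A smaller imprecision: the change of variables on $y$ is not simply $y=\pi^{\tilde\lambda_\s}Y$; as the reduction-map formula in the same section shows, one must also divide by the product $\prod_{\s'<\s}(\operatorname{red}_\s(x)-\operatorname{red}_\s(\s'))^{\lfloor|\s'|/2\rfloor}$.  This does not affect the structure of your argument (the extra product is manifestly $\Frob$-equivariant) but should be stated correctly.
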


\begin{remark}
There are also formulae describing the Frobenius action on the linking chains. See   Theorem  \ref{thm:dual_graph_semistable}, and for full details \cite[Theorem 8.5]{m2d2}.
\end{remark}

\begin{theorem}[Reduction maps]
   For a principal cluster $\s\neq \mathcal{R}$, the reduction of a point $(x,y) \in C(K^{\mathrm{nr}})$ lies on $\Gamma _{\mathfrak s}$ if and only if 
   \begin{equation} \label{reduction map condition}
   v(x-z_{\mathfrak s}) \ge d_{\mathfrak s}\mathrm{~and~} \operatorname{red}_{\mathfrak s}(x)\ne\operatorname{red}_{\mathfrak s}(\mathfrak s')\mathrm{~for~every~proper~}\mathfrak s' < \mathfrak s.
   \end{equation}
    When these conditions are satisfied the reduction is given by 
    \begin{equation} \label{formula for the reduction}
        (x, y) \mapsto\left(\operatorname{red}_{\mathfrak{s}}(x), \pi^{-\frac{\nu_{\mathfrak{s}}}{2}} y \cdot \prod_{\substack{\mathfrak{s}^{\prime}< \mathfrak{s} \\ \delta_{\mathfrak s'}>\frac{1}{2}}}\left(\operatorname{red}_{\mathfrak{s}}(x)-\operatorname{red}_{\mathfrak{s}}(\mathfrak{s}^{\prime})\right)^{-\left\lfloor\frac{\left|\mathfrak{s}^{\prime}\right|}{2}\right\rfloor}\right).
    \end{equation}
  If $\s=\mathcal{R}$ then the reduction of $(x,y)\in C(K^{\mathrm{nr}})$ lies on $\Gamma _\mathcal{R}$ if and only if either (\ref{reduction map condition}) holds, or $v(x-z_\mathcal{R})<d_\s$. In the former case the reduction is given by (\ref{formula for the reduction}), whilst in the latter case  $(x,y)$ reduces to one of the points at infinity on $\Gamma_\mathcal{R}$.\footnote{When there are two points at infinity on $\Gamma_\mathcal{R}$ the reduction can be pinned down precisely by \cite[Proposition 5.23 (i)]{m2d2}.}
\end{theorem}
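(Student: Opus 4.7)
The plan is to construct, for each principal cluster $\mathfrak{s}$, an explicit affine chart of the minimal regular model on which $\Gamma_{\mathfrak{s}}$ appears, and then read off the reduction formula from the change of variables that defines this chart. Following the calculation in Example \ref{ex:nuR}, I would substitute $x = z_{\mathfrak{s}} + \pi^{d_{\mathfrak{s}}} X$; principality of $\mathfrak{s}$ together with semistability (Theorem \ref{semistability criterion subsection}) gives $d_{\mathfrak{s}} \in \mathbb{Z}$ and $\nu_{\mathfrak{s}} \in 2\mathbb{Z}$, so the further substitution $y = \pi^{\nu_{\mathfrak{s}}/2} Y$ is legal and produces an integral equation whose mod-$\pi$ reduction is what we want to analyse.

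Next I would split the product $f(x) = c\prod_r(x-r)$ into three pieces: roots $r \notin \mathfrak{s}$, roots lying in singleton children of $\mathfrak{s}$, and roots lying in proper children $\mathfrak{s}' < \mathfrak{s}$. For $r \notin \mathfrak{s}$ one has $v(z_{\mathfrak{s}}-r) = d_{\{r\}\wedge\mathfrak{s}} < d_{\mathfrak{s}}$, so $(x-r)/\pi^{v(z_{\mathfrak{s}}-r)}$ reduces to the unit whose product over $r \notin \mathfrak{s}$ gives $c_{\mathfrak{s}}/c\cdot\pi^{v(c)}$. Roots in a proper child $\mathfrak{s}'$ all have $\mathrm{red}_{\mathfrak{s}}(r) = \mathrm{red}_{\mathfrak{s}}(\mathfrak{s}')$, contributing a factor $(X - \mathrm{red}_{\mathfrak{s}}(\mathfrak{s}'))^{|\mathfrak{s}'|}$ to the reduction (plus a $\pi$-adic correction that vanishes away from this point). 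After dividing by $\pi^{\nu_{\mathfrak{s}}}$, the equation reduces mod $\pi$ to
\[ Y^2 = c_{\mathfrak{s}} \prod_{\mathrm{odd}\ \mathfrak{o}<\mathfrak{s}}(X-\mathrm{red}_{\mathfrak{s}}(\mathfrak{o})) \prod_{\mathrm{even}\ \mathfrak{s}'<\mathfrak{s}}(X-\mathrm{red}_{\mathfrak{s}}(\mathfrak{s}'))^{|\mathfrak{s}'|} \cdot \prod_{\mathrm{odd\ proper}\ \mathfrak{s}'<\mathfrak{s}}(X-\mathrm{red}_{\mathfrak{s}}(\mathfrak{s}'))^{|\mathfrak{s}'|-1}, \]
which, after moving squares to the left-hand side, yields exactly the $\Gamma_{\mathfrak{s}}$ from Theorem \ref{section 6 big theorem} with a rescaled $Y$-coordinate. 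Matching this rescaling against the substitution $y = \pi^{\nu_{\mathfrak{s}}/2} Y$ produces the product $\prod_{\delta_{\mathfrak{s}'}>1/2}(\mathrm{red}_{\mathfrak{s}}(x)-\mathrm{red}_{\mathfrak{s}}(\mathfrak{s}'))^{-\lfloor|\mathfrak{s}'|/2\rfloor}$ in \eqref{formula for the reduction}; the hypothesis $\delta_{\mathfrak{s}'} > 1/2$ is exactly the condition for $\mathfrak{s}'$ to produce a linking chain rather than a simple node or ramification point of $\Gamma_{\mathfrak{s}}$.

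For the locus criterion, the first condition $v(x-z_{\mathfrak{s}}) \geq d_{\mathfrak{s}}$ is the condition that $X$ is $\pi$-integral in the chart, so $(x,y)$ reduces into the affine patch at all; the second, $\mathrm{red}_{\mathfrak{s}}(x) \neq \mathrm{red}_{\mathfrak{s}}(\mathfrak{s}')$ for every proper $\mathfrak{s}' < \mathfrak{s}$, says that we are away from the singular points of the naive model which get blown up to produce the chains of $\mathbb{P}^1$s of Theorem \ref{thm:links}. A point violating the second condition lies on a blown-up chain, not on $\Gamma_{\mathfrak{s}}$ proper. For $\mathfrak{s} = \mathcal{R}$ there is no parent, so points with $v(x-z_{\mathcal{R}}) < d_{\mathcal{R}}$ are handled instead on the second affine patch $v^2 = t^{2g+2}f(1/t)$ and reduce to the point(s) at infinity of $\Gamma_{\mathcal{R}}$.

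The main obstacle will be the bookkeeping in the second paragraph: verifying that the explicit chart really is the correct piece of the minimal regular model (rather than an auxiliary model requiring further blow-ups on $\Gamma_{\mathfrak{s}}$ itself), and that the shift by $\pi^{\nu_{\mathfrak{s}}/2}$ in $y$ together with the rescaling by $\prod(\mathrm{red}_{\mathfrak{s}}(x)-\mathrm{red}_{\mathfrak{s}}(\mathfrak{s}'))^{-\lfloor|\mathfrak{s}'|/2\rfloor}$ carries $Y$ exactly to the coordinate used to write the equation of $\Gamma_{\mathfrak{s}}$ in Theorem \ref{section 6 big theorem}. One must check separately the non-übereven and übereven cases, since in the latter the right-hand side of the reduced equation is a perfect square, $\Gamma_{\mathfrak{s}}$ splits as $\Gamma_{\mathfrak{s}}^+ \cup \Gamma_{\mathfrak{s}}^-$, and a point lands on $\Gamma_{\mathfrak{s}}^{\pm}$ according to the sign of the reduced $Y$-coordinate — this is precisely where the $\theta_{\mathfrak{s}}$-choice and $\epsilon_{\mathfrak{s}}$ enter.
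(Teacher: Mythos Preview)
The paper does not prove this theorem: as stated in the introduction, this is a user's guide and ``we give no proofs'' except for Theorem~\ref{thm:discriminant_by}. The reference given is \cite[Def.~8.4, Thm.~8.5]{m2d2}, and the full argument lives there.

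That said, your outline is the correct one and matches the structure of the proof in \cite{m2d2}. The substitution $x=z_{\mathfrak{s}}+\pi^{d_{\mathfrak{s}}}X$, $y=\pi^{\nu_{\mathfrak{s}}/2}Y$ is exactly what underlies the charts $\mathcal{U}_D$ of Section~\ref{se:MinRegMod} (with $D$ the disc of centre $z_{\mathfrak{s}}$ and depth $d_{\mathfrak{s}}$), and your analysis of the reduced equation is the content of Example~\ref{ex:nuR} made precise. The point you flag as the ``main obstacle'' --- that this chart really is a piece of the minimal regular model and not something requiring further blow-up --- is handled in \cite{m2d2} by first building the model $\mathcal{C}^{\mathrm{disc}}$ of Theorem~\ref{thm:integral_model} from all valid discs, proving it is regular, and only then identifying the component over the disc of $\mathfrak{s}$ with $\Gamma_{\mathfrak{s}}$. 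So rather than verifying regularity chart-by-chart as you suggest, the argument goes through the global construction first; your local computation then becomes a matter of reading off coordinates on an already-known regular model.
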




\begin{example}
 Consider the genus $2$ curve $C \colon y^2 = x((x+1)^2-5)(x+4)(x-6)$ over $\Q_5$ with associated cluster picture
$$\clusterpicture\Root[A] {2} {first} {mc1};
\Root[A] {2} {mc1} {mc2c1};
\Root[A] {2} {mc2c1} {mc2c2};
\ClusterLDName mc2[][\frac{1}{2}][\mathfrak{t}_1] = (mc2c1)(mc2c2);
\Root[A] {2} {mc2} {mc3c1};
\Root[A] {2} {mc3c1} {mc3c2};
\ClusterLDName mc3[][1][\mathfrak{t}_2] = (mc3c1)(mc3c2);
\ClusterLDName m[][0][\mathcal{R}] = (mc1)(mc2)(mc3);
\endclusterpicture.$$
Picking $z_{\mathcal{R}} = 0$  we have
$\mathrm{red}_{\mathcal{R}}(t) = t \mod \mathfrak m$ and $c_{\mathcal{R}} = 1\in \bar{\mathbb{F}}_5^\times$.
The special fibre of the minimal regular model has a component coming from the unique principal cluster $\mathcal{R}$ given by the equation
\begin{align*}
&\Gamma_{\mathcal{R}} \colon Y^2 = c_{\mathcal{R}} \cdot (X - \mathrm{red}_{\mathcal{R}}(0))(X - \mathrm{red}_{\mathcal{R}}(-4))^2 =  X(X + 1)^2, &\qquad  
\end{align*}
 a genus $0$ curve with a single node at $(X,Y)=(-1,0)$. For the twin $\mathfrak{t}_1$ we have  $2 \delta_{\mathfrak{t}_1}-1 = 0$ so that $\mathfrak{t}_1$ contributes no components (rather, it corresponds to the node on $\Gamma_\mathcal{R}$). On the other hand, the twin $\mathfrak{t}_2$ gives rise to a chain of $2\delta_{\mathfrak{t}_2}-1 = 1$  projective lines from $\Gamma_{\mathcal{R}}$ to itself, as pictured below.
\begin{figure}[h]
\vspace{-26pt}
	\begin{tikzpicture}
	\draw [line width=1.5pt] (0.5,0)-- node[left, xshift=-2cm]{$\Gamma_{\cR}$} ++(3,0);
	\draw [line width=1.5pt] (0.5,0)-- ++(0,-0.8);
	\draw (1.2,0.5) .. controls (1.45, -1.3)and (2.45, -1.3) .. (2.7,0.5);
	\draw [line width=1.5pt] (0.5,0) .. controls (-1, 0)and (0.5, 1.5) .. (0.5,0);
	\end{tikzpicture}
\end{figure}
\vspace{-20pt}

A point $(x,y) \in C(\Q^{\mathrm{nr}}_5)$ reduces to a point on $\Gamma_{\mathcal{R}}$ if and only if either $x \notin \Z^{\mathrm{nr}}_5$, in which case it reduces to the unique point at infinity on $\Gamma_\mathcal{R}$, or  $x \in \Z^{\mathrm{nr}}_5$ and  $x \not \equiv \pm 1~~\textup{mod }5$. Since $\nu_\mathcal{R}=0$, for points satisfying the second condition the reduction map is given by
$(x,y)\mapsto \left(\bar{x},\bar{y}(\bar{x}-1)^{-1}\right).$ 
\Sectionsixcomment{
and $(-19, 48\sqrt{33649})$ reduces to $\Gamma_{\mathcal{R}}$ and its reduction is
$$\left(-19, 3^0 \cdot 48\sqrt{33649} \cdot (-19 - \mathrm{red}_{\mathcal{R}}(\mathfrak{s}_1))^{-1} (-19 - \mathrm{red}_{\mathcal{R}}(\mathfrak{s}_2))^{-1} \right)  \equiv (0, 0) \mod \mathfrak{m}.$$

and $(i + 2, 10\sqrt{7i - 16})$, where $i$ is a root of $-1$ in $\Q_3^{\mathrm{nr}}$, reduces to $\Gamma_{\mathcal{R}}$ and its reduction is
$$\left(i+2, 10\sqrt{7i-16} \cdot (i+2-4)^{-1} (i+2 - 0)^{-1} \right) = \left(i+2, -2\sqrt{i+2}\right) \in \Gamma_{\mathcal{R}}(\F_{3^4})$$ 
The point $(i, 2\sqrt{102 - 289i})$, where $i$ is a root of $-1$ in $\Q_3^{\mathrm{nr}}$, reduces to $\Gamma_{\mathcal{R}}$ and its reduction is
$$\left(i, 2\sqrt{102 - 289i} \cdot (i-4)^{-1} (i - 0)^{-1} \right) = (i, 1)
\in \Gamma_{\mathcal{R}}(\F_{3}(i))\text.$$ }
\end{example}

\Sectionsixcomment{
\begin{example}
Let $C_2 \colon y^2 = x(x-1)(x-p)(x-p^2)(x-2p^2)(x-p^4)$ over $\Qp$ with associated cluster picture depicted below. {\color{red} Not good: $\mathcal{R}$ is not principal.}
\begin{figure}[h] 
\clusterpicture
\Root[A] {1} {first} {mc1c1c1c1};
\Root[A] {1} {mc1c1c1c1} {mc1c1c1c2};
\ClusterLDName mc1c1c1[][2][\mathfrak{t}] = (mc1c1c1c1)(mc1c1c1c2);
\Root[A] {1} {mc1c1c1} {mc1c1c2};
\Root[A] {1} {mc1c1c2} {mc1c1c3};
\ClusterLDName mc1c1[][1][\mathfrak{p}] = (mc1c1c1)(mc1c1c2)(mc1c1c3);
\Root[A] {1} {mc1c1} {mc1c2};
\ClusterLDName mc1[][1][\mathfrak{c}] = (mc1c1)(mc1c2);
\Root[A] {1} {mc1} {mc2};
\ClusterLDName m[][0][\mathcal{R}] = (mc1)(mc2);
\endclusterpicture
\end{figure}

Then $\mathfrak{p}$ is the only principal cluster, $\mathfrak{c}$ is a cotwin, and $\mathfrak{t}$ is a twin. When we pick $z_{\mathfrak{p}} = 0$, we get that
$$\Gamma_{\mathfrak{p}} \colon y^2 = ... \cdot (x-1)(x-2) \cdot x^2$$
is of genus 1. The cotwin $\mathfrak{c}$ gives rise to a chain of $\P^1$'s of length $2 \cdot \delta_{\mathfrak{p}} = 2$ from $\Gamma_{\mathfrak{p}}$ to itself, and the twin $\mathfrak{t}$ gives rise to a chain of $\P^1$'s of length $2 \cdot \delta_{\mathfrak{t}} = 4$ from $\Gamma_{\mathfrak{p}}$ to itself.
\end{example}
}

\begin{example}
   Consider $C \colon y^2 = (x^4-p^8)((x+1)^2-p^2)((x-1)^2-p)$ over $\Qp$, with associated cluster picture
    \vspace{-5pt}
$$\clusterpicture
\Root[A] {1} {first} {mc1c1};
\Root[A] {1} {mc1c1} {mc1c2};
\Root[A] {1} {mc1c2} {mc1c3};
\Root[A] {1} {mc1c3} {mc1c4};
\ClusterLDName mc1[][2][\mathfrak{s}] = (mc1c1)(mc1c2)(mc1c3)(mc1c4);
\Root[A] {1} {mc1} {mc2c1};
\Root[A] {1} {mc2c1} {mc2c2};
\ClusterLDName mc2[][1][\mathfrak{t}_1] = (mc2c1)(mc2c2);
\Root[A] {1} {mc2} {mc3c1};
\Root[A] {1} {mc3c1} {mc3c2};
\ClusterLDName mc3[][\frac12][\mathfrak{t}_2] = (mc3c1)(mc3c2);
\ClusterLDName m[][0][\mathcal{R}] = (mc1)(mc2)(mc3);
\endclusterpicture.$$
Then $\mathcal{R}$ and $\mathfrak{s}$ are the only principal clusters. Moreover, $\mathcal{R}$ is \"ubereven. Taking $z_{\mathcal{R}} = z_{\mathfrak{s}} = 0$, we get associated components of $\mathcal{C}^{\min}_{\bar{\mathbb{F}}_p}$:
$$ \Gamma_{\mathcal{R}}^+ \colon Y =X-1,\quad\Gamma_{\mathcal{R}}^- \colon Y = 1-X ,\quad \textup{and}\quad \Gamma_\s:Y^2=X^4-1. $$

The parent-child relation $\mathfrak{s} < \mathcal{R}$ gives rise to two chains of length $\delta_{\mathfrak{s}} = 1$, one linking $\Gamma_{\mathcal{R}}^+$ with $\Gamma_{\mathfrak{s}}$, and the other linking $\Gamma_{\mathcal{R}}^-$ with $\Gamma_{\mathfrak{s}}$. The twin  $\mathfrak{t}_1$ gives rise to a chain of length $2\delta_{\t_1}-1=1$   linking $\Gamma_{\mathcal{R}}^-$ to $\Gamma_{\mathcal{R}}^+$. The twin $\t_2$ has $2\delta_{\t_2}-1=0$ so contributes a chain of length $0$ from $\Gamma_{\mathcal{R}}^-$ to $\Gamma_{\mathcal{R}}^+$, which is to be interpreted as a point of intersection between these two curves. The configuration of the components of the special fibre is shown below. Finally, since both $\mathcal{R}$ and $\s$ are $G_K$-stable, and $\epsilon_{\mathcal{R}}(\sigma) = 1$ for all $\sigma \in G_K$,  the Frobenius element fixes $\Gamma_{\mathcal{R}}^+$, $\Gamma_{\mathcal{R}}^-$, and $\Gamma_{\mathfrak{s}}$.

\begin{figure}[h]
\begin{tikzpicture}
\draw [line width=1.5pt] (0,0)-- ++(1.8,1.3);
\draw [line width=1.5pt] (3,0)-- ++(-1.8,1.3);
\draw (0.5,0.7)-- ++(2.1,0);
\draw (0.3,0.5)-- ++(0,-1.5);
\draw (2.7,0.5)-- ++(0,-1.5);
\draw [line width=1.5pt] (0,-0.7)-- node[above]{$\Gamma_{\c}$} node[below, font=\small]{genus $1$} ++(3,0);
\draw (-0.3,0) node{$\Gamma_{\mathcal{R}}^+$};
\draw (3.3,0) node{$\Gamma_{\mathcal{R}}^-$};
\end{tikzpicture}
\end{figure}

\end{example}

\newpage

\section{Minimal regular model (semistable case)}\label{se:MinRegMod}

Throughout this section, we assume that $C$ is \underline{semistable}. We also assume for simplicity that \underline{all proper clusters have integral depth}, and that \underline{there is no cluster} \underline{$\s\neq\cR$ of size $2g+1$}.

\begin{definition}\label{def:valid_discs}
An \emph{integral disc} in $\Kbar$ is a subset $D\subseteq\Kbar$ of the form $D=D(z_D,d_D)=\{x\in \Kbar\::\:v(x-z_D)\geq d_D\}$ with $d_D\in\ZZ$. The point $z_D$ is called a \emph{centre} of $D$, and $d_D$ is called its \emph{depth}. The \emph{parent disc} $P(D)$ of $D$ is the disc with the same centre and depth $d_D-1$. We also write $\nu_D(f)=v(c)+\sum_{r\in\cR}\min\{d_D,v(r-z_D)\}$, and $\omega_D(f)\in\{0,1\}$ for the parity of $\nu_D(f)$.

We write $D(\cR)$ for the smallest disc containing $\cR$. An integral disc $D$ is called \emph{valid} when $D\subseteq D(\cR)$ and $\#(\cR\cap D)\geq2$.
\end{definition}

\subsection*{Construction of a regular model $\cC^{\mathrm{disc}}$ over $\OKnr$}

Firstly, for each valid disc~$D$ we let $f_D(x_D)\in\OKnr[x_D]$ denote the polynomial $f_D(x_D)\!=\!\pi^{-\nu_D(f)}f(\pi^{d_D}x_D\!+\!z_D)$. We set $\mathcal{U}_D$ to be the subscheme of $\mathbb{A}^2_{\OKnr}$ cut out by $y_D^2=\pi^{\omega_D(f)}f_D(x_D)$. We let $\mathcal{U}_D^\circ$ denote the open subscheme of $\mathcal{U}_D$ formed by removing all the points in the special fibre corresponding to repeated roots of the reduction of $f_D$ (viewed as points on $\mathcal{U}_D$ with $y_D=0$).

\smallskip

Next, for the maximal valid disc $D=D(\cR)$ we let $g_D(t_D)\in\OKnr[t_D]$ denote the polynomial $g_D(t_D)=t_D^{\deg(f)}f_D(1/t_D)$. We set $\mathcal W_D$ to be the subscheme of $\mathbb{A}^2_{\OKnr}$ cut out by $w_D^2=\pi^{\omega_D(f)}g_D(t_D)$ if $\deg(f)$ is even, and $w_D^2=\pi^{\omega_D(f)}t_Dg_D(t_D)$ if $\deg(f)$ is odd. Again, we let $\mathcal{W}_D^\circ$ denote the open subscheme of $\mathcal{W}_D$ formed by removing all the points in the special fibre corresponding to repeated roots of the reduction of $g_D$ (viewed as points on $\mathcal{W}_D$ with $w_D=0$).

\smallskip

Finally, for each valid disc $D\!\neq\!D(\cR)$, we let $g_D(s_D,t_D)\!\in\!\OKnr[s_D,t_D]/(s_Dt_D\!-\!\pi)$ be the polynomial satisfying $ g_D(\pi/t_D,t_D)=t_D^{\nu_D(f)-\nu_{P(D)}(f)}f_D(1/t_D)$ in $\Knr(t_D)$. We set $\mathcal{W}_D$ to be the subscheme of $\mathbb{A}^3_{\OKnr}$ cut out by the equations $s_Dt_D=\pi$ and $w_D^2=s_D^{\omega_D(f)}t_D^{\omega_{P(D)}(f)}g_D(s_D,t_D)$. Again, we let $\mathcal{W}_D^\circ$ denote the open subscheme of $\mathcal{W}_D$ formed by removing all the points in the special fibre corresponding to repeated roots of the reduction of $g_D$ (viewed as points on $\mathcal{W}_D$ with $w_D=0$).

\begin{remark}
An explicit formula for $g_D$ is given in \cite[Definition~3.15]{m2d2}.
\end{remark}

\begin{theorem}\label{thm:integral_model}
A regular model $\cC^{\mathrm{disc}}$ of $C$ over $\OKnr$ is given by gluing each $\mathcal{W}_D^\circ$ to $\mathcal{U}_D^\circ$ for each valid $D$, and to $\mathcal{U}_{P(D)}^\circ$ for all valid $D\neq D(\cR)$ via the identifications
\begin{align*}
t_D = 1/x_D &= \pi/(x_{P(D)}-\pi^{1-d_D}(z_D-z_{P(D)})), \\
s_D = \pi x_D &= x_{P(D)}-\pi^{1-d_D}(z_D-z_{P(D)}), \\
w_D = t_D^{\lfloor\nu_D(f)/2\rfloor-\lfloor\nu_{P(D)}(f))/2\rfloor}y_D &= s_D^{\lfloor\nu_{P(D)}(f))/2\rfloor-\lfloor\nu_D(f)/2\rfloor}y_{P(D)}.
\end{align*}
\end{theorem}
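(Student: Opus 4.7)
The plan has three stages: verify that the proposed gluings are well defined, identify the generic fibre, and establish regularity. For the \emph{gluings}, I would show that each identification falls into one of two families: $\mathcal{W}_D\leftrightarrow\mathcal{U}_D$ via $t_D=1/x_D$, and $\mathcal{W}_D\leftrightarrow\mathcal{U}_{P(D)}$ via $s_D=x_{P(D)}-\pi^{1-d_D}(z_D-z_{P(D)})$. For each family a direct calculation, using the identity $\nu_D(f)=\nu_{P(D)}(f)+\#(\cR\cap D)$ to track the exponents of $\pi$ appearing in $y_D$, $w_D$, and $y_{P(D)}$, shows that the defining equation of one chart pulls back to that of the other (modulo $s_Dt_D=\pi$). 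The cocycle condition on triple overlaps then reduces to the observation that, after inverting $\pi$, every gluing is just the substitution $x=\pi^{d_D}x_D+z_D$, $y=\pi^{\lfloor\nu_D(f)/2\rfloor}y_D$ (or its inverted-coordinate analogue near infinity), so associativity is inherited from the generic fibre, which is simultaneously identified with $C$.

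The third and crucial stage, \emph{regularity}, will be checked chart by chart. Generic smoothness follows from separability of $f$, so only closed points of the special fibre require inspection. In $\mathcal{U}_D^\circ$, the defining equation $y_D^2=\pi^{\omega_D(f)}f_D(x_D)$ is regular at a closed special-fibre point $(\bar{\alpha},0)$ exactly when $\bar{f}_D$ has at most a simple zero at $\bar{\alpha}$; the open condition defining $\mathcal{U}_D^\circ$ excludes precisely the repeated-root locus. In $\mathcal{W}_D^\circ$, the ambient surface $s_Dt_D=\pi$ in $\mathbb{A}^2_{\OKnr}$ is itself regular (its maximal ideal at the origin is generated by $s_D,t_D$ alone), and the double cover $w_D^2=s_D^{\omega_D(f)}t_D^{\omega_{P(D)}(f)}g_D(s_D,t_D)$ remains regular at any point where the right-hand side is a unit or has at most a simple zero -- again excluding only the repeated-root locus of $\bar{g}_D$.

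The main obstacle is to show that the collection of charts is \emph{sufficient}, that is, every closed point of $\mathcal{C}^{\mathrm{disc}}$ lies in some $\mathcal{U}_D^\circ$ or $\mathcal{W}_D^\circ$. This is where the standing hypotheses (semistability, integral depths, and no size-$(2g+1)$ cluster besides $\cR$) are essential: they guarantee that $\nu_D(f)\in\mathbb{Z}$, so no fractional powers of $\pi$ appear in the substitutions, and that the combinatorial family of valid discs already captures every point produced by the implicit blow-up tower starting from the naive Weierstrass model. Once this sufficiency is established, the chart-by-chart regularity above completes the proof.
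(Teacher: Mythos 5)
Your overall plan (gluing, generic fibre, regularity) is the right shape, and the identity $\nu_D(f)=\nu_{P(D)}(f)+\#(\cR\cap D)$ is indeed the correct bookkeeping tool for the transition functions. But your stated regularity criterion for $\mathcal{U}_D^\circ$ is wrong whenever $\omega_D(f)=1$: at a \emph{simple} zero $\bar\alpha$ of $\bar f_D$ the completed local ring is $\OKnr[[x,y]]/(y^2-\pi x u)$ for a unit $u$, and $y^2-\pi x u$ lies in $\mathfrak{m}^2$, so this is an $A_1$-singularity, \emph{not} a regular point. Yet such a point is not excised by the definition of $\mathcal{U}_D^\circ$, which removes only repeated roots of $\bar f_D$. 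The theorem is still true, but only because the standing hypotheses force this configuration never to arise: one must observe that $\bar f_D$ has a simple root only when $D=D(z_\s,d_\s)$ for a proper cluster $\s$ with a singleton child, so $\nu_D(f)=\nu_\s$, and then run a parity argument propagating the semistability condition $\nu_\s\in2\ZZ$ (which the criterion in Theorem \ref{semistability criterion subsection} states only for \emph{principal} $\s$) to twins, cotwins, and $\cR$ via $\nu_\s=\nu_{P(\s)}+|\s|\,\delta_\s$ and a case analysis that uses the no-$(2g+1)$-cluster assumption. This parity computation, not the mere integrality of $\nu_D(f)$, is the substantive dividend of the hypotheses, and it is absent from your sketch. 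The same care is needed at the origin of the $\mathcal{W}_D$-charts, where $\omega_D(f)=\omega_{P(D)}(f)=1$ with $g_D$ a unit would again produce a singular point, and ruling this out is another parity claim about consecutive valid discs.

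Your ``sufficiency of charts'' step is also conceptually off: since $\cC^{\mathrm{disc}}$ is \emph{defined} as the gluing of the $\mathcal{U}_D^\circ$ and $\mathcal{W}_D^\circ$, every point lies in a chart by construction. What actually needs establishing is that $\cC^{\mathrm{disc}}$ is \emph{proper} over $\OKnr$ --- ``the discs capture the implicit blow-up tower'' is a heuristic, not a proof, and one typically argues either by realising $\cC^{\mathrm{disc}}$ as an explicit iterated blow-up of a proper Weierstrass model or by checking the valuative criterion against the disc combinatorics. Finally, note that the paper itself gives no proof of this statement but cites \cite[Proposition~5.5]{m2d2}, so there is no in-text argument to compare with; your proposal has to carry its own weight, and at present the regularity stage does not.
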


\begin{remark}\label{rem:C_disc_not_minimal}
The regular model $\cC^{\mathrm{disc}}$ above is not minimal in general: discs with $\omega_D(f)=1$ produce $\mathbb{P}^1$s in the special fibre with multiplicity $2$ and self-intersection $-1$. Blowing down these components yields the minimal regular model.
\end{remark}

\begin{remark}\label{rem:twins_regularmodel}
In the construction of $\cC^{\mathrm{disc}}$ in \cite[Proposition~5.5]{m2d2} for general semistable $C$, the scheme $\mathcal{U}_D^\circ$ is defined by removing from the special fibre of $\mathcal{U}_D$ all points corresponding to the maximal valid subdiscs of~$D$. Under our extra assumptions, this is equivalent to the reduction of $f_D$ having a repeated root at this point. This is untrue when $C$ has a twin of half-integral depth; see Example~\ref{ex:minregmod2}.
\end{remark}

\newpage

\begin{example}\label{ex:minregmod1}
	Consider $C: y^2 = (x^4-p^4)(x^4-1)$ over $\QQ_p$. Its cluster picture is 
	\vspace{-0.1cm}
	\[
		\scalebox{1}{\clusterpicture            
		\Root[A] {1} {first} {r1};
		\Root[A] {} {r1} {r2};
		\Root[A] {} {r2} {r3};
		\Root[A] {} {r3} {r4};
		\Root[A] {3} {r4} {r5};
		\Root[A] {} {r5} {r6};
		\Root[A] {} {r6} {r7};
		\Root[A] {} {r7} {r8};
		\ClusterLD c1[][{1}] = (r1)(r2)(r3)(r4);;
		\ClusterLD c2[][{0}] = (c1)(r5)(r6)(r7)(r8);
		\endclusterpicture} \,.
	\]
	Here, there are two valid discs $D = D(0,0)$ and $D' = D(0,1)$. These correspond to the two proper clusters in the cluster picture. Using $\nu_D(f) = 0$ and $\nu_{D'}(f) = 4$, we find
	\[
	\mathcal{U}_D = \Spec\left(\frac{\ZZ_p^{\nr}[x,y]}{(y^2-(x^4-p^4)(x^4-1))}\right),
	\; \mathcal{W}_D = 
	\Spec\left(\frac{\ZZ_p^{\nr}[t,w]}{(w^2-(1-p^4t^4)(1-t^4))}\right)
	\]
	and 
	\[
		\resizebox{\hsize}{!}{$
	\mathcal{U}_{D'} = \Spec\left(\frac{\ZZ_p^{\nr}[x',y']}{(y'^2-(x'^4-1)(p^4x'^4-1))}\right),
	\; \mathcal{W}_{D'} = 
	\Spec\left(\frac{\ZZ_p^{\nr}[s',t',w']}{(s't'-p, w'^2-(1-t'^4)(s'^4-1))}\right).$}
	\]

	We have $\mathcal{U}_D^{\circ}  = \mathcal{U}_D \setminus \{(x,y,p)\}$, whereas $\mathcal{U}_{D'}^{\circ}  = \mathcal{U}_{D'}$, $\mathcal{W}_{D}^{\circ}  = \mathcal{W}_{D}$ and $\mathcal{W}_{D'}^{\circ}  = \mathcal{W}_{D'}$.
	Using the identifications $t' = 1/x' = p/x$, $s' = px' = x, y' = y/p^2$ and $w' = t'^2 y'$, we see that the special fibre of $\cC^{\mathrm{disc}}$ consists of two genus $1$ curves which intersect in two points.
\end{example}

\begin{example}\label{ex:minregmod2}	
	Consider $C: y^2 = p(x^2-p^5)(x^3-p^3)((x-1)^3-p^9)$ over $\QQ_p$ for $p \geq 5$. Its cluster picture is 
\vspace{-.3cm}	$$
	\clusterpicture           
	\Root[A] {} {first} {r1};
	\Root[A] {} {r1} {r2};
	\Root[A] {4} {r2} {r3};
	\Root[A] {} {r3} {r4};
	\Root[A] {} {r4} {r5};
	\Root[A] {5} {r5} {r6};
	\Root[A] {} {r6} {r7};
	\Root[A] {} {r7} {r8};
	\ClusterLD c1[][{\frac 32}] = (r1)(r2);
	\ClusterLD c2[][{1}] = (c1)(r3)(r4)(r5);
	\ClusterLD c3[][{3}] = (r6)(r7)(r8);
	\ClusterLD c4[][{0}] = (c2)(c3);
	\endclusterpicture.
	$$
	\vspace{-0.1cm}
	There are six valid discs: $D(0,0), \; D(0,1), \; D(0,2), \;  D(1,1), \;  D(1,2), \;  D(1,3).$
	Not all of these discs are minimal defining discs for clusters. For example, the cluster of relative depth $3$ is cut out by three different valid discs.
	
Note that $C$ has a proper cluster of non-integral depth, so Theorem~\ref{thm:integral_model} does not apply verbatim; we need the more general version from Remark~\ref{rem:twins_regularmodel}. We give a few illustrative charts of the model $\cC^{\mathrm{disc}}$.

	For $D = D(0,0)$, we find
	\[
	\mathcal{U}_D = \Spec \left(\frac{\ZZ_p^{\nr}[x,y]}{(y^2- p(x^2-p^5)(x^3-p^3)((x-1)^3-p^9))}\right)
	\]
	and $\mathcal{U}_D^{\circ} = \mathcal{U}_D \setminus \{(x,y,p),(x-1,y,p)\}$. Note that the special fibre of $\mathcal{U}_D$ is non-reduced. More precisely its closure is a projective line of multiplicity $2$ with self-intersection $-1$ and is blown down when constructing the minimal regular model (see Remark \ref{rem:C_disc_not_minimal}). The same applies to the component corresponding to $D(1,2)$. 
	
	For the disc $D_1 = D(1,1)$, we get
	\[
	\mathcal{W}_{D_1} =
	\Spec\left(\frac{\ZZ_p^{\nr}[s_1,t_1,w_1]}{(s_1t_1-p, w_1^2-t_1(1-p^6t_1^3)((s_1+1)^2-p^5)((s_1+1)^3-p^3))}\right)
	\]
	and $\mathcal{W}_{D_1}^{\circ} = \mathcal{W}_{D_1} \setminus \{(s_1+1,w_1,p)\}$. Similarly for $D_1' = D(0,1)$, we have $\mathcal{W}_{D_1'}^{\circ} = \mathcal{W}_{D_1'} \setminus \{(s'_1-1,w'_1,p)\}$.
	
	Let $D_2 = D(0,2)$, then  
	\[
	\mathcal{U}_{D_2} = \Spec \left(\frac{\ZZ_p^{\nr}[x_2,y_2]}{(y_2^2- (x_2^2-p)(p^3x_2^3-1)((p^2x_2-1)^3-p^9))}\right).
	\]
	Note that although the reduction of $f_{D_2}$ has a double root at $x_2 = 0$, this double root does not correspond to a valid subdisc of~$D_2$. Hence we do not remove this point in forming $\mathcal{U}_{D_2}^\circ$, so $\mathcal{U}_{D_2}^\circ=\mathcal{U}_{D_2}$ in this case.
\end{example}

\newpage
\section{Dual graph of special fibre and its homology (semistable case)}\label{se:dual}

In this section $C$ is \underline{semistable}. Let $\Cmin$ be its minimal regular model over $\OKnr$. The \emph{dual graph} $\Upsilon_C$ consists of a vertex $v_\Gamma$ for every irreducible component $\Gamma$ of the geometric special fibre $\Cmink$, with an edge connecting $v_\Gamma$ and $v_{\Gamma'}$ for each intersection point of $\Gamma$ and $\Gamma'$ (self-intersections of $\Gamma$ correspond to loops based at $v_\Gamma$). The action of $\Frob$ on $\Cmink$ induces a corresponding action on~$\Upsilon_C$.

\begin{theorem}\label{thm:dual_graph_semistable}
$\Upsilon_C$ consists of one vertex $v_\s$ for every non-\"ubereven principal cluster $\s$ and two vertices $v_\s^+$, $v_\s^-$ for each \"ubereven principal cluster $\s$, connected by chains of edges as follows:

\bgroup
\def\arraystretch{1.15}
\begin{tabular}{|c|c|c|c|c|}
\hline
Name & \multicolumn{2}{c|}{Endpoints} & Length & Conditions \\\hline
$L_{\s'}$ & $v_{\s'}$ & $v_\s$ & $\frac12\delta_{\s'}$ & $\s'<\s$ both principal, $\s'$ odd \\\hline
$L_{\s'}^{\pm}$ & $v_{\s'}^{\pm}$ & $v_\s^{\pm}$ & $\delta_{\s'}$ & $\s'<\s$ both principal, $\s'$ even \\\hline
$L_\t$ & $v_\s^-$ & $v_\s^+$ & $2\delta_\t$ & $\s$ principal, $\t<\s$ twin \\\hline
$L_\t$ & $v_\s^-$ & $v_\s^+$ & $2\delta_\t$ & $\s$ principal, $\s<\t$ cotwin \\\hline
\multicolumn{5}{|c|}{and, if $\cR$ is non-principal} \\\hline
$L_{\s_1,\s_2}$ & $v_{\s_1}$ & $v_{\s_2}$ & $\frac12(\delta_{\s_1}+\delta_{\s_2})$ & $\cR=\s_1\sqcup\s_2$ with $\s_1,\s_2$ principal odd \\\hline
$L_{\s_1,\s_2}^{\pm}$ & $v_{\s_1}^{\pm}$ & $v_{\s_2}^{\pm}$ & $\delta_{\s_1}+\delta_{\s_2}$ & $\cR=\s_1\sqcup\s_2$ with $\s_1,\s_2$ principal even \\\hline
$L_\t$ & $v_\s^-$ & $v_\s^+$ & $2(\delta_\s+\delta_\t)$ & $\cR=\s\sqcup\t$ with $\s$ principal even, $\t$ twin \\\hline
\end{tabular}
\egroup\\
Here, we adopt the convention that $v_\s^+=v_\s^-=v_\s$ if $\s$ is not \"ubereven, so for example if $\s'<\s$ are even non-\"ubereven principal clusters, then there are two chains of edges $L_{\s'}^+$, $L_{\s'}^-$ connecting $v_{\s'}$ and $v_\s$.

Frobenius acts on $\Upsilon_C$ by $\Frob(v_\s^{\pm})=v_{\Frob(\s)}^{\pm\epsilon_\s(\Frob)}$, $\Frob(L_{\s'}^{\pm})=L_{\Frob(\s')}^{\pm\epsilon_\s(\Frob)}$ and $\Frob(L_\t)=\epsilon_\t(\Frob)L_{\Frob(\t)}$, where $-L$ denotes $L$ with the opposite orientation. 
\end{theorem}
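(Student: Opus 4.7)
The plan is to derive Theorem~\ref{thm:dual_graph_semistable} directly from the geometric description of $\Cmink$ given in Section~\ref{se:specialfibre}, by translating components and linking chains of $\mathbb{P}^1$s into the combinatorial data of a graph. This is essentially a reorganisation of already-established geometric statements into pure graph-theoretic form.

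First I would read off the vertices. By Theorem~\ref{section 6 big theorem}, the irreducible components of $\Cmink$ that are not part of a linking chain are precisely the $\Gamma_\s$ (one per non-\"ubereven principal cluster $\s$) and the $\Gamma_\s^{\pm}$ (two per \"ubereven principal cluster $\s$), producing the vertices $v_\s$ and $v_\s^{\pm}$. Next I would enumerate the edges by applying the rule ``a chain of $n$ projective lines between $\Gamma$ and $\Gamma'$ contributes a path of $n+1$ edges from $v_\Gamma$ to $v_{\Gamma'}$'' (one edge per intersection point along the chain). Reading Theorem~\ref{thm:links} term by term, the component counts $\tfrac12\delta_{\s'}-1$, $\delta_{\s'}-1$ and $2\delta_\t-1$ translate to the path lengths $\tfrac12\delta_{\s'}$, $\delta_{\s'}$ and $2\delta_\t$ recorded in the first four rows of the table.

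The last three rows and the cotwin row come from the analogous construction in \cite[Section~8]{m2d2} handling $\cR$ non-principal. There one works with a partition $\cR=\s_1\sqcup\s_2$ whose bridging chain has length controlled by $\delta_{\s_1}+\delta_{\s_2}$, with the factor-of-two distinction between the odd and even cases arising from exactly the same source as in the principal-$\cR$ case. One just has to take care that the chain endpoints are correctly identified---for instance, a cotwin bridge returns to the same component-pair $v_\s^{\pm}$---but once that is done the edge count follows again by the $n \mapsto n+1$ translation.

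For the Frobenius action, $\Frob(v_\s^{\pm})=v_{\Frob(\s)}^{\pm\epsilon_\s(\Frob)}$ is immediate from the component-level Frobenius formula of Section~\ref{se:specialfibre}. The action on a chain $L_{\s'}^{\pm}$ joining distinct principal clusters is forced by its action on endpoints. The delicate case is a chain $L_\t$ whose endpoints are both of the form $v_\s^{\pm}$: here $\Frob$ may both permute interior vertices and swap the endpoints, and the possible orientation reversal is recorded by the sign $\epsilon_\t(\Frob)$. To verify this, one takes explicit coordinates on the blow-up chain, computes $\Frob$'s action on the local uniformiser at the twin node, and checks that its reduction mod~$\m$ matches the definition $\epsilon_\t(\sigma)=\sigma(\theta_\t)/\theta_{\Frob(\t)}\bmod\m$. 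This last identification is the step I expect to be the main obstacle: matching the orientation sign on an explicit blow-up chain to the somewhat opaque invariant $\epsilon_\t$ requires honest coordinate calculations, while all other steps are essentially combinatorial bookkeeping on top of previously-established geometric results.
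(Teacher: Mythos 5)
Your approach is correct and is exactly the intended derivation: the paper itself offers no proof (it cites \cite[Theorem~8.5]{m2d2}), and the dual graph $\Upsilon_C$ is indeed meant to be read off mechanically from the component-and-chain description of $\Cmink$ in Section~\ref{se:specialfibre}, using the translation you state ($n$ linking $\mathbb{P}^1$s become a path of $n+1$ dual-graph edges). You are also right that the cotwin row and the three non-principal-$\cR$ rows of the table are not covered by the version of Theorem~\ref{thm:links} printed here and must be pulled directly from \cite[Section~8]{m2d2}, and right to single out the orientation sign $\epsilon_\t(\Frob)$ on chains with both endpoints on the same $\Gamma_\s^{\pm}$ as the one genuinely computational step rather than pure bookkeeping.
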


The homology $H_1(\Upsilon_C,\ZZ)$ is a finite-rank free $\ZZ$-module, carrying an induced Frobenius action and a \emph{length pairing} $\langle\cdot,\cdot\rangle\colon H_1(\Upsilon_C,\ZZ)\otimes H_1(\Upsilon_C,\ZZ)\rightarrow\ZZ$ where $\langle\gamma_1,\gamma_2\rangle$ is the length of the intersection of cycles $\gamma_1$ and $\gamma_2$, interpreted in a suitably signed manner. The rank of $H_1(\Upsilon_C,\ZZ)$ is the potential toric rank of $\Jac C$, and the cokernel of the map $H_1(\Upsilon_C,\ZZ)\rightarrow H^1(\Upsilon_C,\ZZ)$ induced by the length pairing is Frobenius-equivariantly isomorphic to the group of geometric components of the special fibre of the N\'eron model of $\Jac C$.

\begin{theorem}\label{thm:homology_of_dual_graph_semistable}
Let $A$ be the set of even non-\"ubereven clusters except for $\cR$.
\begin{enumerate}
	\item If $\cR$ is not \"ubereven, then $H_1(\Upsilon_C,\ZZ) = \ZZ[A]$ is the free $\ZZ$-module generated by symbols $\ell_\s$ for $\s\in A$.
	\item If $\cR$ is \"ubereven, let $B$ be the set of those clusters $\s\in A$ such that $\s^*=\cR$. Then $H_1(\Upsilon_C,\ZZ)\leq\ZZ[A]$ is the corank $1$ submodule of $\ZZ[A]$ consisting of those elements $\sum_{\s\in A}a_\s\ell_\s$ such that $\sum_{\s\in B}a_\s=0$.
\end{enumerate}
In both cases, Frobenius acts on $H_1(\Upsilon_C,\ZZ)$ is by $\Frob(\ell_\s)=\epsilon_\s(\Frob)\ell_{\Frob(\s)}$, and the length pairing by
\vspace{-.3cm}
\[
\langle\ell_{\s_1},\ell_{\s_2}\rangle = 
\begin{cases}
	0 & \text{if $\s_1^*\neq\s_2^*$,} \\
	2(d_{\s_1\wedge\s_2}-d_{P(\s_1^*)}) & \text{if $\s_1^*=\s_2^*\neq\cR$,} \\
	2(d_{\s_1\wedge\s_2}-d_{\cR}) & \text{if $\s_1^*=\s_2^*=\cR$.}
\end{cases}
\]
\end{theorem}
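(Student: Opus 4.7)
The strategy is to use the explicit description of $\Upsilon_C$ from Theorem~\ref{thm:dual_graph_semistable} to construct specific 1-chains $\ell_\s$ indexed by $\s \in A$, to identify $H_1(\Upsilon_C,\ZZ)$ as the subgroup of their $\ZZ$-span consisting of genuine cycles, and finally to compute the Frobenius action and length pairing term by term.

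First I would build $\ell_\s$ as follows. For $\s \in A$, let $\s = \s_0 \subsetneq \s_1 \subsetneq \cdots \subsetneq \s_m = \neck{\s}$ be the chain of übereven ancestors of $\s$, so each intermediate $\s_i$ is übereven and $P(\neck{\s})$, if it exists, is non-übereven. Define
\[
\ell_\s = \sum_{i=0}^{m}\bigl(L_{\s_i}^{+} - L_{\s_i}^{-}\bigr),
\]
with the convention $L_\t^+ - L_\t^- = L_\t$ whenever $\t$ is a twin or cotwin layer, each $L^\pm$ oriented from the smaller cluster to the larger. When $\neck{\s} \neq \cR$, the boundaries telescope to $\partial \ell_\s = 0$; when $\neck{\s} = \cR$ is übereven, $\partial \ell_\s = v_\cR^+ - v_\cR^-$ instead. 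The chains $\{\ell_\s\}_{\s\in A}$ are linearly independent in $C_1(\Upsilon_C,\ZZ)$ since each uniquely contains its bottom-level chain(s) $L_\s^\pm$ or $L_\s$, so the induced map $\Phi\colon \ZZ[A] \hookrightarrow C_1(\Upsilon_C,\ZZ)$ is injective, and an element $\sum_\s a_\s[\s]$ lies in $Z_1(\Upsilon_C,\ZZ)$ iff $\sum_{\s \in B}a_\s = 0$ (automatic when $\cR$ is not übereven). To obtain surjectivity onto $H_1(\Upsilon_C,\ZZ)$ I would match ranks with Theorem~\ref{th:ptr}: the potential toric rank of $\Jac C$ equals $\#A$ or $\#A - 1$, exactly the rank of the identified submodule.

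Next, I would deduce the Frobenius action by applying $\Frob(L_{\s'}^\pm) = L_{\Frob(\s')}^{\pm\epsilon_{\s'}(\Frob)}$ term by term. The key simplification is that $\epsilon_{\s_i}(\Frob) = \epsilon_\s(\Frob)$ for all $i$ in the übereven chain, since $\neck{\s_i} = \neck{\s}$ and $\epsilon$ depends only on $\neck{\cdot}$ by Remark~\ref{CLrem2}. This gives $\Frob(\ell_\s) = \epsilon_\s(\Frob)\ell_{\Frob(\s)}$. The submodule condition is $\Frob$-stable when $\cR$ is übereven since $B$ is $\Frob$-stable and $\epsilon_\s(\Frob) = \epsilon_\cR(\Frob)$ for all $\s \in B$.

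The main obstacle is the length pairing. Decomposing $\ell_{\s_1}$ and $\ell_{\s_2}$ into their $L^\pm$-chain pieces, their shared chains are exactly the $L_{\s'}^\pm$ with $\s_1\wedge\s_2 \subseteq \s' \subseteq \neck{\s_1}$; this set is empty unless $\neck{\s_1} = \neck{\s_2}$, giving $\langle\ell_{\s_1},\ell_{\s_2}\rangle = 0$ in the disjoint case. Otherwise, the coherent orientation of $\pm$ chains ensures that $L_{\s'}^+$ and $L_{\s'}^-$ each appear with matching signs in both cycles, contributing $+\delta_{\s'}$ each for a total of $2\delta_{\s'}$. Summing and telescoping gives
\[
\langle\ell_{\s_1},\ell_{\s_2}\rangle = 2\!\!\!\sum_{\s_1\wedge\s_2 \subseteq \s' \subseteq \neck{\s_1}}\!\!\!\delta_{\s'} = 2(d_{\s_1\wedge\s_2} - d_{P(\neck{\s_1})})
\]
when $\neck{\s_1} \neq \cR$; when $\neck{\s_1} = \cR$, the topmost cluster contributes no $L_\cR^\pm$ chain (as $\cR$ has no parent), so the telescope truncates to $2(d_{\s_1\wedge\s_2} - d_\cR)$. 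The delicate step will be tracking signs carefully to verify that shared chains add constructively rather than cancelling; this follows from propagating a single choice of $\pm$-labelling at each übereven cluster coherently along the ancestor chain.
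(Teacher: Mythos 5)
Your overall approach is sound and mirrors the natural proof: build explicit $1$-chains $\ell_\s$ from the chain decomposition in Theorem~\ref{thm:dual_graph_semistable}, identify the kernel condition $\sum_{\s\in B}a_\s=0$ via the boundary computation, deduce the Frobenius action from $\neck{\s_i}=\neck{\s}$, and telescope the shared lengths to get the pairing. The construction and the pairing bookkeeping are essentially correct (in fact the sign issue you flag at the end resolves cleanly, since in your explicit recipe every $L^+$ appears with coefficient $+1$ and every $L^-$ with $-1$ in \emph{every} $\ell_\s$ whose tower includes it, so shared chains always contribute constructively).

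However, there is a genuine gap in the surjectivity step. Matching ranks with Theorem~\ref{th:ptr} shows only that your sublattice has finite index in $H_1(\Upsilon_C,\ZZ)$, not that it equals it — a full-rank sublattice of a free $\ZZ$-module need not be the whole module ($2\ZZ\subset\ZZ$). You need to establish integral generation directly. The cleanest fix in your framework: contract each chain to a single edge, pick one designated edge per $\s\in A$ (the contracted $L_\s^-$, or $L_\s$ for a twin), and verify that removing these (with one exception when $\cR$ is übereven) leaves a spanning tree; then your $\ell_\s$ are exactly the fundamental cycles of those complementary edges, which form a $\ZZ$-basis. This verification is not difficult but it is the content of the claim, not something you can replace by a rank count.

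A second, smaller loose end: when $\cR$ is non-principal (the last three rows of the table in Theorem~\ref{thm:dual_graph_semistable}), the dual graph has no vertices $v_\cR^{\pm}$, and the top-level links are the special chains $L_{\s_1,\s_2}^{\pm}$ or $L_\t$ directly between the two branches. Your recursion for $\ell_\s$, which telescopes up to $v_\cR^{\pm}$, does not apply verbatim there; the ancestor chain has to be glued across the root using these special links, and the boundary bookkeeping that produces the condition $\sum_{\s\in B}a_\s=0$ changes form. The statement still holds, but you should treat this case separately rather than leaving it implicit.
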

\begin{theorem}\label{th:BYTreeDualGraph}$\Upsilon_C$ is a double cover of $T_C$ ramified over the blue part, the quotient map being induced by the hyperelliptic involution $\iota$.
Giving edges on $\Upsilon_C$ length 2 makes the identification $\Upsilon_C/\langle\iota\rangle=T_C$ distance preserving.
The preimage of a vertex $v$ in $T_C$ of genus $g(v)>0$ is a vertex in $\Upsilon_C$ corresponding to a component of genus $g(v)$ in the special fibre.
\end{theorem}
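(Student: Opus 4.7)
The plan is to use the explicit combinatorial descriptions of both $\Upsilon_C$ (Theorem~\ref{thm:dual_graph_semistable}) and $T_C$ (Definition~\ref{StoTC}) to define a candidate quotient map $\pi\colon\Upsilon_C\to T_C$, identify it with the map induced by the hyperelliptic involution $\iota$, and verify the covering properties case by case.

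First I would determine how $\iota$ acts on $\Upsilon_C$. For a non-\"ubereven principal cluster $\s$, the component $\Gamma_\s$ from Theorem~\ref{section 6 big theorem} is an irreducible double cover of $\mathbb{P}^1$ on which $\iota$ acts by $Y\mapsto -Y$, so the vertex $v_\s$ is fixed. For an \"ubereven principal $\s$, the defining equation factors as $Y^2-\lambda^2$ for some $\lambda\in\bar k[X]$ (since $c_\s\in\bar k^\times$ is a square and all children are even), so $\Gamma_\s$ splits into two rational components interchanged by $\iota$, giving $\iota(v_\s^\pm)=v_\s^\mp$. From the regular-model charts of Section~\ref{se:MinRegMod} one reads off that $\iota$ fixes pointwise each chain $L_{\s'}$ coming from an odd proper child, interchanges each pair $L_{\s'}^+, L_{\s'}^-$ coming from an even principal child, and acts on each chain $L_\t$ (twin or cotwin) as reflection about its midpoint.

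I would then define $\pi$ on the level of metric graphs---giving every edge of $\Upsilon_C$ length $2$---by sending $v_\s^\pm$ to $v_\s$; mapping each $L_{\s'}$ with $\s'$ odd principal isometrically onto the blue edge of length $\delta_{\s'}$ between $v_{\s'}$ and $v_\s$; mapping each pair $L_{\s'}^\pm$ jointly onto the yellow edge of length $2\delta_{\s'}$; and folding each chain $L_\t$ for a twin (or cotwin) in half onto the yellow pendant edge of length $2\delta_\t$ from $v_\s$ to the leaf $v_\t$, so that the midpoint of $L_\t$ maps to $v_\t$. A direct length count shows that all distances match those prescribed in Definition~\ref{StoTC}, and comparison with the $\iota$-action just determined identifies $\pi$ with the quotient by $\iota$, ramified exactly over the blue part of $T_C$. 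The genus claim is then immediate from Theorem~\ref{section 6 big theorem}: a blue vertex $v_\s$ with $g(v_\s)>0$ corresponds to a non-\"ubereven principal cluster, and the genus of the corresponding irreducible component $\Gamma_\s$ agrees with $g(v_\s)$ by a direct comparison of its defining equation with Definition~\ref{StoTC}.

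The main obstacle is the case analysis required when $\mathcal{R}$ is non-principal---specifically when $\mathcal{R}$ is a twin, a cotwin, a union of two principal children, or has a child of size $2g+1$. In each such degenerate configuration, both the dual graph $\Upsilon_C$ (last rows of Theorem~\ref{thm:dual_graph_semistable}) and the BY tree $T_C$ (third and fourth bullets of Definition~\ref{StoTC}) undergo ad hoc modifications, and one must check that these modifications remain compatible under $\pi$ so that the double-cover description, length-preservation, and ramification claims all persist uniformly.
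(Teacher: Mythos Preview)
The paper does not prove this theorem at all: it is a user's guide that explicitly states ``we give no proofs'' and simply refers the reader to \cite[Theorem~5.18, Definitions~D.6,~D.9]{m2d2}. So there is nothing to compare your argument against within this paper; you are supplying a proof where the paper supplies only a citation.

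Your proof sketch is essentially correct and follows the natural line of attack: read off the $\iota$-action from the explicit components in Theorem~\ref{section 6 big theorem}, then match the combinatorics of Theorem~\ref{thm:dual_graph_semistable} against Definition~\ref{StoTC}. The length bookkeeping checks out (odd child: chain of $\tfrac12\delta_{\s'}$ edges at length~$2$ maps isometrically to a blue edge of length $\delta_{\s'}$; even child: two chains of $\delta_{\s'}$ edges each map to a yellow edge of length $2\delta_{\s'}$; twin: a chain of $2\delta_\t$ edges folds to a yellow edge of length $2\delta_\t$), and the genus identification is exactly the comparison of $|\tilde\s|$ with the degree of the defining polynomial of $\Gamma_\s$.

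One small point to tighten: your claim that $\iota$ ``fixes pointwise'' each chain $L_{\s'}$ for odd $\s'$ is slightly imprecise as a statement about the dual graph---what you mean is that $\iota$ fixes each intermediate $\mathbb{P}^1$ in the chain setwise (hence fixes the corresponding vertex of $\Upsilon_C$), not that it acts trivially on the curve itself. This does not affect the argument. Your identification of the non-principal $\cR$ cases as the main source of tedium is accurate; there is no conceptual obstacle there, only the need to track the merging rules in Definition~\ref{StoTC} against the last three rows of the table in Theorem~\ref{thm:dual_graph_semistable}.
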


\begin{example}\label{ex:dualgraph1}
Consider $C$ over $\QQ_p$ given by the equation
\[
y^2 = x(x-p)(x-2p)(x-3p)(x-1)(x-2)(x-3)(x-4)
\]
for $p\geq5$. Its cluster picture is
\hbox{\clusterpicture            
	\Root[A] {1} {first} {r1};
	\Root[A] {} {r1} {r2};
	\Root[A] {} {r2} {r3};
	\Root[A] {} {r3} {r4};
	\Root[A] {3} {r4} {r5};
	\Root[A] {} {r5} {r6};
	\Root[A] {} {r6} {r7};
	\Root[A] {} {r7} {r8};
	\ClusterLDName c1[][{1}][] = (r1)(r2)(r3)(r4);;
	\ClusterLDName c2[][{0}][] = (c1)(r5)(r6)(r7)(r8);
	\endclusterpicture}. Write $\s$ for the cluster of size 4. 
According to Theorem~\ref{thm:dual_graph_semistable}, the dual graph $\Upsilon_C$ consists of two vertices $v_\s$ and $v_\cR$, connected by two edges $L_\s^{\pm}$. The action of Frobenius on $\Upsilon_C$ fixes the two vertices, and acts on edges via $\Frob(L_\s^{\pm})=L_\s^{\pm\left(\!\frac6p\!\right)}$  where $\left(\!\frac6p\!\right)$ is the Legendre symbol. In other words, the action on $\Upsilon_C$ is trivial if $p\equiv\pm1$ or $\pm5$ mod $24$, and interchanges the two edges $L_\s^+$ and $L_\s^-$ if $p\equiv\pm7$ or $\pm11$ mod $24$. In particular, the Frobenius action on $\Upsilon_C$ can be non-trivial even when the action on $\cR$ is trivial. Pictorially, $\Upsilon_C$ is
\vspace{-0.3cm}
\[
\hbox{\begin{tikzpicture}[scale=\GraphScale]
	\BlackVertices
	\VertexLW[x=0.00,y=0.00,L=\relax]{1}{$v_{\cR}$};
	\VertexLE[x=2.00,y=0.00,L=\relax]{2}{$v_{\s}$};
	\BendedBlackEdges
	\Edge(1)(2)
	\Edge(2)(1)
\end{tikzpicture}}
\hspace{0.8cm}
\raisebox{0.2cm}{\text{or}}
\hspace{0.8cm}
\hbox{\begin{tikzpicture}[scale=\GraphScale]
	\BlackVertices
	\VertexLW[x=0.00,y=0.00,L=\relax]{1}{$v_{\cR}$};
	\VertexLE[x=2.00,y=0.00,L=\relax]{2}{$v_{\s}$};
	\BendedBlackEdges
	\Edge(1)(2)
	\Edge(2)(1)
	\EArrOfs1212{in=270,out=90}{0,-0.2}{0,0.3}{0.5}
	\EArrOfs2121{in=90,out=270}{0,0.2}{0,-0.3}{0.5}
\end{tikzpicture}} \,.
\]

From this, we see that $H_1(\Upsilon_C,\ZZ)=\ZZ$, the induced action of Frobenius is multiplication by $\left(\!\frac6p\!\right)$, and the length pairing is $\langle m,n\rangle = 2mn$. This agrees with Theorem~\ref{thm:homology_of_dual_graph_semistable}.
\end{example}

\begin{example}
Consider $C\colon y^2\!=\!(x-1)(x-2)(x-3)(x-p^2)(x-p^{n+2})(x+p^{n+2})$ over $\QQ_p$ for $p \ge 5$ (cf.\ Example~\ref{BYex3}). Its cluster picture is
\vspace{-0.1cm}
\[
\hbox{\clusterpicture            
	\Root[A] {} {first} {r1};
	\Root[A] {} {r1} {r2};
	\Root[A] {} {r2} {r3};
	\Root[A] {2} {r3} {r4};
	\Root[A] {2} {r4} {r5};
	\Root[A] {} {r5}{r6};
	\ClusterLDName c1[][n][\t] = (r5)(r6);
	\ClusterLDName c2[][2][\s] = (r4)(c1);
	\ClusterLDName c3[][0][\cR] = (r1)(r2)(r3)(c1)(c2);
\endclusterpicture} \,.
\]
According to Theorem~\ref{thm:dual_graph_semistable}, the dual graph $\Upsilon_C$ consists of two vertices $v_{\cR}$ and $v_{\s}$, connected by a single edge $L_\s$ and with a loop $L_\t$ of $2n$ edges connecting $v_{\s}$ to itself. Pictorially, $\Upsilon_C$ is
\vspace{-0.2cm}
\[
\hbox{\begin{tikzpicture}[scale=\GraphScale]
	\draw
	(4,0) node {$2n$};
	\BlackVertices
	\VertexLN[x=1,y=0.000,L=\relax]{1}{$v_{\cR}$};
	\VertexLN[x=2,y=0.00,L=\relax]{3}{$v_{\s}$};
	\VertexLN[x=3,y=0.5,L=\relax]{4}{};
	\VertexLN[x=4,y=0.5,L=\relax]{5}{};
	\VertexLN[x=5,y=0.5,L=\relax]{6}{}; 
	\VertexLN[x=6,y=0,L=\relax]{7}{}; 
	\VertexLN[x=3,y=-0.5,L=\relax]{8}{};
	\VertexLN[x=4,y=-0.5,L=\relax]{9}{};
	\VertexLN[x=5,y=-0.5,L=\relax]{10}{}; 
	\BlackEdges
	\Edge(1)(3)
	\Edge(3)(4)
	\Edge(4)(5)
	\Edge[style={dashed}](5)(6)
	\Edge(6)(7)
	\Edge(7)(10)
	\Edge[style={dashed}](10)(9)
	\Edge(9)(8)
	\Edge(3)(8)
\end{tikzpicture}} \,.
\]
\end{example}

\begin{example}\label{BYTreeEx}
Consider $C\colon y^2=(x^2-p^a)((x-1)^2-p^b)(p^cx^2-1)$ over $\QQ_p$, for some positive integers $a$, $b$, $c$. Its cluster picture is
\[
\hbox{\clusterpicture            
	\Root[A] {} {first} {r1};
	\Root[A] {} {r1} {r2};
	\Root[A] {7} {r2} {r3};
	\Root[A] {} {r3} {r4};
	\Root[A] {7} {r4} {r5};
	\Root[A] {} {r5}{r6};
\ClusterLDName c1[][a/2][\t_1] = (r1)(r2);
\ClusterLDName c2[][b/2][\t_2] = (r3)(r4);
\ClusterLDName c3[][c/2][\s] = (c1)(c2);
\ClusterLDName c4[][-c/2][\cR] = (c3)(r5)(r6);
\endclusterpicture} \,.
\]
We compute the homology $H_1(\Upsilon_C,\ZZ)$ using Theorem~\ref{thm:homology_of_dual_graph_semistable}, without first computing~$\Upsilon_C$. Except for $\cR$ the even non-\"ubereven clusters are the two twins $\t_1$ and $\t_2$, so $H_1(\Upsilon_C,\ZZ)$ is free of rank $2$, generated by $\ell_{\t_1}$ and $\ell_{\t_2}$. Frobenius acts on $H_1(\Upsilon_C,\ZZ)$ by multiplication by $\left(\!\frac{-1}{p}\!\right)$, and the length pairing has matrix $M=\smallmatrix{a+c}{c}{c}{b+c}$.

From this, we see that the potential toric rank of $\Jac C$ is $2$ (potentially totally toric reduction), and that the group of geometric components of the special fibre of the N\'eron model of $\Jac C$ has size $\det(M)=ab+bc+ca$. By computing the Smith normal form of $M$, we find that the group structure is $\ZZ/A\ZZ\oplus\ZZ/B\ZZ$, with $A=\gcd(a,b,c)$ and $B=(ab+bc+ca)/\gcd(a,b,c)$.
\end{example}

\newpage
\section{Special fibre of the minimal regular SNC model (tame case)}\label{se:SFSNC}

Assume $C$ has \underline{tame reduction}. We give a qualitative description of the special fibre of the minimal regular model of $C$ with strict normal crossings (SNC),  over $\mathcal{O}_{\Knr}$. Denote this model $\mathcal{C}^{snc}$,  special fibre $\mathcal{C}^{snc}_{\bar{k}}$. We assume \underline{$\mathcal{R}$ is principal}.\footnote{This only serves to simplify the statements, see the references given for the general case.} 

\begin{notation} \label{spfibdef1}
Let $X$ be an $I_K$-orbit of clusters with $\s\in X$. We say that $X$ is proper/principal/odd/even/\ub/twin/singleton if $\s$ is. If $X'$ is another orbit, write $X'<X$ if $\s'<\s$ for some $\s'\in X'$,  and call $X'$ \textit{stable} if $|X'|=|X|$.  Write $\s_\textup{sing}$ for the set of size $1$ children of $\s$. Define $g_{\textup{ss}}(X)=0$ if $X$ is \ub, and so that $|\tilde{\s}|\in \{2g_{\textup{ss}}(X)+2, 2g_{\textup{ss}}(X)+1\}$ otherwise. For $X$  (henceforth) proper write $d_X=d_\s$, $\delta_X=\delta_\s$ (for $\s\neq \mathcal{R}$), $\lambda_X=\tilde{\lambda}_\s$, and for $X$ even $\epsilon_X=(-1)^{\frac{|X|}{2}(\nu_{\s^*}-|\s^*|d_{\s^*})}\in \{\pm 1\}$.\footnote{Let $I_\s$ be the stabiliser of $\s$ inside the inertia group $I_K$. Then the restriction of  $\epsilon_\s$ to $I_\s$ is a character $I_\s\rightarrow\{\pm 1\}$, and $\epsilon_X=-1$ if and only if this character is non-trivial.}  Let $e_X\in \mathbb{Z}_{\geq 1}$ be minimal with $e_X|X|d_\s\in \mathbb{Z}$ and $e_X|X|\nu_\s\in 2\mathbb{Z}$.  Write $d_X=\frac{a_X}{b_X}$ in lowest terms, and set $b_X'=b_X/\textup{gcd}(|X|,b_X)$. Finally, define $g(X)$ as $\lfloor g_\textup{ss}(X)/b_X'\rfloor$ if $|X|\lambda_X\in \mathbb{Z}$,  $\lfloor g_\textup{ss}(X)/b_X'+1/2\rfloor$ if $|X|\lambda_X\notin \mathbb{Z}$ and $b_X'$ is even, and  $0$ otherwise. 
\end{notation}

\begin{definition} \label{sloped chain defi}
Let $t_1,t_2\in \mathbb{Q}$ and  $\mu\in \mathbb{N}$. Let $n$ be minimal such that there exist~coprime pairs $m_i,d_i\!\in\!\mathbb{Z}$   with
$\mu t_1=\frac{m_0}{d_0}\!>\!\frac{m_1}{d_1}\!>\!...\!>\!\frac{m_{n+1}}{d_{n+1}}\!=\!\mu t_2$ and  with $m_id_{i+1}\!-\!m_{i+1}d_i\!=\!1$  for each $0\!\leq \!i\!\leq \!n.$
A \textit{sloped chain of rational curves with parameters} $(t_2,t_1,\mu)$ is a chain of $\P^1$s $E_1,...,E_n$ with multiplicities $\mu d_i$, intersecting transversally. A \textit{crossed tail} is a sloped chain with $\mu\in 2\mathbb{N}$ and two additional (disjoint) $\P^1$s of multiplicity $\mu/2$ intersecting $E_n$  transversally.  
\end{definition}

\begin{theorem} \label{tame reduction fibre theorem}
 Each principal $I_K$-orbit $X$ of clusters gives rise to two `central' components $\Gamma_X^{\pm}$ of $\mathcal{C}^{snc}_{\bar{k}}$ if $X$ is \ub~ and $\epsilon_X=1$, and one central component $\Gamma_X$ ($=\Gamma_X^+=\Gamma_X^-$) otherwise. These have genus $g(X)$, and multiplicity $|X|e_X$ unless $X$ is \ub~with $\epsilon_X=-1$ when they have multiplicity $2|X|e_X$.  These components are linked by (one or two) sloped chains of rational curves with parameters $(t_2,t_1,\mu)$ indexed by pairs  $X'<X$ with $X$ principal as follows:  \vspace{-12pt}
\begin{center}
\scalebox{0.81}{\begin{tabular}{ |p{1.1cm}|p{0.9cm}|p{0.7cm}|p{1.3cm}p{2.3cm}p{1cm}| p{4.5cm}| }
 \multicolumn{7}{c}{} \\  
 \hline
Name & From  & To & $t_1$ & $t_2$ & $\mu$ & Condition \\ 
 \hline
$L_{X,X'}$  & $ \Gamma_X $  & $\Gamma_{X'}$ &$-\lambda_X$ &  $-\lambda_X-\frac{1}{2}\delta_{X'}$ & $|X'|$ & $X'$ odd principal\\
\hline
$L_{X,X'}^+$&   $\Gamma_X^+$&$\Gamma_{X'}^+$  &  $-d_X$ &$-d_{X'}$ & $|X'|$ & $X'$ even principal, $\epsilon_{X'}=1$\\
 \hline
 $L_{X,X'}^-$&   $\Gamma_X^-$&$\Gamma_{X'}^-$  & $-d_X$ & $-d_{X'}$ & $|X'|$ & $X'$  even principal, $\epsilon_{X'}=1$\\
 \hline
$L_{X,X'}$  & $ \Gamma_X $  &$\Gamma_{X'}$& $-d_X$   &  $-d_{X'}$& $2|X'|$ & $X'$ even principal, $\epsilon_{X'}=-1$\\
\hline
$L_{X'}$&   $\Gamma_X^-$&$\Gamma_{X}^+$  & $-d_{X}$  & $-d_{X}-2\delta_{X'}$ & $|X'|$ & $X'$ twin, $\epsilon_{X'}=1$\\
 \hline
\end{tabular}}
\end{center}
The central components   $\Gamma_X$ with $e_X>1$ are  intersected by  (the first curve of one or more)  sloped chains  with parameters $(\frac{1}{\mu}\lfloor \mu t_1-1\rfloor,t_1,\mu)$ as follows: 
\begin{center}
\addtolength{\leftskip} {-2cm}
 \addtolength{\rightskip}{-2cm}
\scalebox{0.81}{\begin{tabular}{|l|l|ll|l|l|l|}
\hline 
From  & No.  & $t_1$&$\mu$ & \multicolumn{2}{l|}{Condition}\tabularnewline
\hline 
  $ \Gamma_\mathcal{R} $  &$1$&   $(g+1)d_\mathcal{R}-\lambda_\mathcal{R}$ &$1$& \multicolumn{2}{l|}{$|\mathcal{R}|=2g+1$}\tabularnewline
\hline 
  $\Gamma_\mathcal{R}^{\pm}$&$2$  &$-d_\mathcal{R}$ & $1$& \multicolumn{2}{l|}{$|\mathcal{R}|=2g+2, \epsilon_{\mathcal{R}}=1$}\tabularnewline
\hline 
 $\Gamma_\mathcal{R}$&$1$  &$-d_\mathcal{R}$ & $2$&\multicolumn{2}{l|}{$|\mathcal{R}|=2g+2, e_\mathcal{R}>2,~\epsilon_{\mathcal{R}}=-1$}\tabularnewline
\hline 
 $ \Gamma_X $  &$\frac{|X||\s_\textup{sing}|}{b_X}$&   $-\lambda_X$ & $b_X$ & \multicolumn{2}{l|}{$e_X>b_X/|X|$,~$|\s_{\textup{sing}}|\geq 2~~~\forall~ \s\in X$}\tabularnewline
\hline 
  $\Gamma_X$&$1$  &$-d_X$ & $2|X|$& No $X'<X$ is stable,  and either & $\lambda_X\notin \mathbb{Z},~e_X>2$ \tabularnewline
\cline{1-4}    \cline{6-6}
   $\Gamma_X^{\pm}$&$2$  &$-d_X$ & $|X|$&   $X$ \ub~or $g_{\textup{ss}}(X)>0$  &$ \lambda_X\in \mathbb{Z}$\\
\hline 
 $\Gamma_X $&$1$  &$-\lambda_X$ & $|X|$& \multicolumn{2}{l|}{$X$ is not \ub, no odd proper $X'<X$ is stable,}\tabularnewline
 &&&&\multicolumn{2}{l|}{and $g_{\textup{ss}}(X)=0$ or some singleton $X'<X$ is stable}   \tabularnewline
\hline 
\end{tabular}}

%
\end{center}
Finally (regardless of whether $e_X>1$ or not), each $\Gamma_X$ is intersected by the (first curve of) a crossed tail $T_{X'}$ with parameters $(-d_{X'},-d_X+\frac{1}{2|X|},2|X|)$ for each $I_K$-orbit of twins $X'<X$ with $\epsilon_{X'}=-1$ . 
\end{theorem}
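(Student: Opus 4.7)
The plan is to realise $\mathcal{C}^{\mathrm{snc}}$ as the minimal desingularisation of a tame Galois quotient of a semistable model. By the tameness hypothesis and Remark~\ref{field of semistability remark}, there exists a tamely ramified extension $L/\Knr$ (of degree coprime to $p$) over which $C$ has semistable reduction, with a corresponding semistable model $\mathcal{C}^L/\mathcal{O}_L$ described by Section~\ref{se:specialfibre}. The cyclic group $G=\Gal(L/\Knr)$ acts on $\mathcal{C}^L$, and $\mathcal{C}^{\mathrm{snc}}$ is the minimal desingularisation of the normal quotient $\mathcal{C}^L/G$. Crucially, the $G$-action on the components of $\mathcal{C}^L_{\bar{k}}$ coincides with the $I_K$-action on clusters from Section~\ref{se:clusters}, so $G$-orbits of components correspond bijectively to $I_K$-orbits of principal clusters, and $G$-orbits of semistable linking chains correspond to $I_K$-orbits of the cluster edges of Theorem~\ref{thm:links}.

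Each principal $I_K$-orbit $X$ then yields one central component $\Gamma_X$ (resp.\ a pair $\Gamma_X^\pm$ when $X$ is \"ubereven with $\epsilon_X=+1$) as the image of the corresponding $G$-orbit of semistable central components. The multiplicity $|X|e_X$ equals the ramification index of the quotient map along the image: the factor $|X|$ arises because the orbit is transitively permuted of size $|X|$, and the factor $e_X$ because the minimal field of definition over $\Knr$ of the image has further tame ramification $e_X$, matching exactly the minimality conditions $e_X|X|d_\s\in\mathbb{Z}$ and $e_X|X|\nu_\s\in 2\mathbb{Z}$ expressing when the horizontal and vertical local coordinates on the central component become integral. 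When $X$ is \"ubereven with $\epsilon_X=-1$ the hyperelliptic involution sits inside the stabiliser, doubling the multiplicity and merging $\Gamma_X^+$ with $\Gamma_X^-$. The genus $g(X)$ then follows by Riemann--Hurwitz applied to the quotient of the semistable component by the induced $\mu_{b_X'}$-action, with the $+\tfrac12$ correction reflecting an additional branch point at infinity when $|X|\lambda_X\notin\mathbb{Z}$ and $b_X'$ is even.

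The sloped chains arise as Hirzebruch--Jung resolutions of cyclic quotient singularities in $\mathcal{C}^L/G$. Locally at a node modelled by $uv=\pi_L^N$ the $G$-action is diagonal, so the quotient singularity is toric and its minimal resolution is a chain of $\mathbb{P}^1$'s whose multiplicities are read off from the continued-fraction sequence interpolating the slopes of the incident branches --- precisely as encoded in Definition~\ref{sloped chain defi}. The parameters $(t_1,t_2,\mu)$ for each of the five linking-chain rows are computed from the local coordinate system around the cluster orbit, using the valuations $-d_X,-d_{X'},-\lambda_X,-\lambda_X-\tfrac12\delta_{X'}$ and the thickening factor $|X'|$ (doubled in the $\epsilon_{X'}=-1$ cases, where the extra $\mathbb{Z}/2$ in the stabiliser is twisted away). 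The additional chains attached to a central component with $e_X>1$ arise from resolving cyclic quotient singularities at isolated points where the ramification divisor of $\mathcal{C}^L\to\mathcal{C}^L/G$ meets that component away from any semistable node; their count is controlled by $|\mathfrak{s}_{\mathrm{sing}}|$ and $b_X$, and which of the various rows fires depends on whether any proper $X'<X$ is stable and on whether $\lambda_X\in\mathbb{Z}$. Crossed tails for twins $X'$ with $\epsilon_{X'}=-1$ encode the case where inertia swaps the two $\mathbb{P}^1$'s of the semistable twin resolution: after quotient this is a single chain ending in a pair of half-multiplicity reduced components --- the ``cross''.

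The main obstacle will be the case analysis required to match, row by row, each local cyclic quotient singularity on $\mathcal{C}^L/G$ with its Hirzebruch--Jung resolution, and to verify minimality of the resulting SNC model by ruling out contractible $(-1)$-curves through adjunction using the multiplicities and arithmetic genera constructed above. The distinctions in Notation~\ref{spfibdef1} --- whether $|X|\lambda_X\in\mathbb{Z}$, whether $b_X'$ is even, whether a given subcluster orbit $X'<X$ is stable --- reflect precisely which exceptional curves or branch points survive minimisation, and handling these sub-cases uniformly (together with the interaction between the \"ubereven/$\epsilon_X$ behaviour and the hyperelliptic involution) is where the bulk of the technical work lies.
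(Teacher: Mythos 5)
The paper does not prove this theorem; it cites \cite[Theorems 7.12 and 7.18]{FN}, and the strategy used there is exactly the one you propose: form a semistable model $\mathcal{C}^L$ over a tame extension $L/K^{\mathrm{nr}}$, quotient by $G=\Gal(L/K^{\mathrm{nr}})$, and resolve the cyclic quotient singularities by Hirzebruch--Jung, with central components coming from $G$-orbits of semistable components, sloped chains from the nodes, the extra chains of the second table from isolated fixed points on components with $e_X>1$, and crossed tails from orbits of linking chains that are reversed by inertia. Your sketch correctly identifies all of these ingredients and where they come from, so at the level of strategy it coincides with the cited source.

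One point that should be made more careful is your justification of the multiplicity $|X|e_X$. The factor $|X|$ does not literally appear ``because the orbit has size $|X|$'': for a tame quotient the multiplicity of a component $\bar\Gamma$ relative to $\pi_K$ is $|G|/|I_\Gamma|$, where $I_\Gamma\leq\Stab_G(\Gamma)$ is the part of the stabiliser that acts trivially on $\Gamma$ but nontrivially on a transverse parameter. Writing $|G|=|X|\cdot|\Stab_G(\Gamma)|$ gives multiplicity $|X|\cdot[\Stab_G(\Gamma):I_\Gamma]$, and it is then a separate identification --- using the definitions of $e_X$ via the conditions $e_X|X|d_\s\in\mathbb{Z}$, $e_X|X|\nu_\s\in 2\mathbb{Z}$ --- that $[\Stab_G(\Gamma):I_\Gamma]=e_X$. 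That local computation (together with the verification, which you flag, that the H--J resolution followed by the correct $(-1)$-curve contractions is the \emph{minimal SNC} model rather than merely a regular model) is where essentially all of the actual work in \cite{FN} lies, and your write-up currently gestures at it rather than establishing it.
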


\begin{remark} \label{galois and equation remark}
There is also a description of the action of $\textup{Gal}(\bar{k}/k)$ on the special fibre in terms of clusters. Moreover, one can in principle  find equations for the components of the special fibre. We refer to the references below.  
\end{remark}

\begin{example}[A type $\textup{II}^*$ elliptic curve] \label{spfibex1} 
Take 
$E:y^2=x^3-p^5$ over $\mathbb{Q}_p$ for $p\geq 5$, and $\zeta_3$  a primitive $3$rd root of unity.\footnote{The material in this section applies verbatim to elliptic curves of the form $y^2=\textup{cubic}$.} The cluster picture is  
$$
 \clusterpicture            
  \Root[A] {} {first} {r1};
  \Root[A] {2} {r1} {r2};
  \Root[A] {2} {r2} {r3};;
  \ClusterLDName c1[][\frac 53][\cR] = (r1)(r2)(r3);
\endclusterpicture 
\qquad \text{with }\phantom{spa}
\mathcal{R}=\{p^{\frac{5}{3}},\zeta_3p^{\frac{5}{3}},\zeta_3^2p^{\frac{5}{3}}\}, 
$$ 
 $d_\mathcal{R}=5/3$, $\nu_\mathcal{R}=5$, $e_\mathcal{R}=6$ and $\lambda_\mathcal{R}=5/2$.  As $\mathbb{Q}_p(\mathcal{R})/\mathbb{Q}_p$ is tamely ramified, $E$ has tame reduction. 
The  cluster $\mathcal{R}$ is principal and fixed by $I_K$, but the roots  lie in a 
 
  \noindent
\begin{minipage}[t]{.63\textwidth}
single $I_K$-orbit.  The special fibre of the minimal regular SNC model (displayed right) has a
 single central component $\Gamma_\mathcal{R}$ of multiplicity $6$ and genus $0$, intersected by  sloped chains  with parameters $(-1,5/6,1)$, $(-3,-5/2,3)$, and $(-5/2,-5/3,2)$ coming from the $1$st, $4$th, and $5$th rows of the ($2$nd) table in Theorem \ref{tame reduction fibre theorem} respectively. By considering the sequences
\end{minipage}%
\smash{$~$ \raise -32pt\hbox{
\begin{minipage}{0.29\textwidth}
\vskip 3pt
  \includegraphics[height=2.27cm]{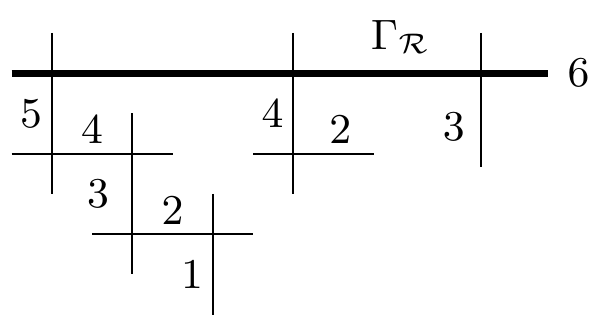}
\end{minipage}}
}
\vskip 2pt
\noindent  

\[\frac{5}{6}>\frac{4}{5}>\frac{3}{4}>\frac{2}{3}>\frac{1}{2}>0>-1,~~\phantom{hi}-\frac{15}{2}>-8>-9,~~~\textup{ and }~~~-\frac{10}{3}>-\frac{7}{2}>-4>-5,\]
which are each minimal\footnote{See \cite[Remark 3.15]{D}  for criteria guaranteeing minimality.} satisfying the determinant condition  of Definition \ref{sloped chain defi}. We find that the special fibre has the pictured form, so the Kodaira type of $E$   is $\textup{II}^*$.
\end{example}
 
 \begin{remark} \label{Kodaira type tentacle remark}
The other Kodaira types arise similarly to the above example, with one central component met by several sloped chains.  
\end{remark}

\begin{example}
Take $C/\mathbb{Q}_p:y^2=((x^2-p)^2-p^4)(x^2+1)(x-1)$, cluster picture
\vspace{-5pt}
$$
 \clusterpicture            
  \Root[A] {} {first} {r1};
  \Root[A] {} {r1} {r2};
  \Root[A] {4} {r2} {r3};
  \Root[A] {2} {r3} {r4};
  \Root[A] {6} {r4} {r5};
  \Root[A] {} {r5}{r6};
  \Root[A] {} {r6}{r7};
  \ClusterLDName c1[][1][\t_1] = (r1)(r2);
  \ClusterLDName c2[][1][\t_2] = (r3)(r4);
  \ClusterLDName c3[][\frac 12][\mathfrak{s}] = (c1)(c2);
  \ClusterLDName c4[][0][\cR] = (c3)(r5)(r6)(r7);
\endclusterpicture 
\qquad \text{with }
\phantom{hi}\cR = \{(p\pm p^2)^{\frac{1}{2}},-(p\pm p^2)^\frac{1}{2},i,-i,1\}.
$$
The special fibre of the minimal regular SNC model (displayed right) has $3$ central 
  \noindent
\begin{minipage}[t]{.65\textwidth}
 components  $\Gamma_\mathcal{R}$ and $\Gamma_\s^{\pm}$ ($\s$ is \ub~and $\epsilon_{\{\s\}}\!=\!1$). The component  $\Gamma_\s^{+}$ (resp. $\Gamma_\s^-$)   intersects  $\Gamma_\mathcal{R}$ as they are linked by a chain with parameters $(0,\frac{1}{2},1)$ which is empty. The $\Gamma_{\s}^{\pm}$ are linked by a chain with parameters $(-1/2,3/2,2)$, consisting of $3$ curves of multiplicity~$2$, coming from the inertia orbit $X\!=\!\{\t_1,\t_2\}$ with $\epsilon_X\!=\!1$.
\end{minipage}%
\smash{$\quad\quad$\raise -28pt\hbox{
\begin{minipage}{0.28\textwidth}
\vskip 3pt
\hskip -.5cm
  \includegraphics[height=2.4cm]{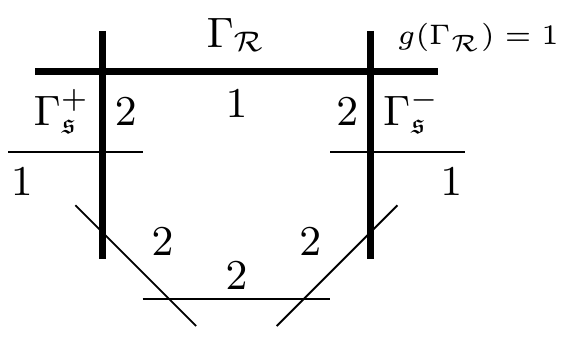}
\end{minipage}}
}
\vskip 2pt
\noindent  
The  $\Gamma^{\pm}_\s$ are each intersected by one further chain with parameters $(-2,-1/2,1)$  arising from the $6$th row of the $2$nd table in Theorem \ref{tame reduction fibre theorem}. 
\end{example}

\noindent\textbf{Erratum.}
%
%
 In \cite{FN} Theorems 1.12 and 7.18, there is a typo : the column ``$t_2$'' in the
 second table of each should  be ``$t_2 + \delta$''. Similarly in Theorem 6.3 in the first table. 
 \begin{minipage}[t]{.65\textwidth}
For example the curve $C:y^2=(x^3-p^2)(x^4-p^{11})$ has 
cluster picture  \scalebox{0.8}{\clusterpicture
		\Root[A] {} {first} {r1};
		\Root[A] {} {r1} {r2};
		\Root[A] {} {r2} {r3};
		\Root[A] {} {r3} {r4}
		\ClusterLDName c1[][\frac{11}{4}][] = (r1)(r2)(r3)(r4);
		\Root[A] {} {c1} {r5};
		\Root[A] {} {r5} {r6};
		\Root[A] {} {r6} {r7};
		\ClusterLDName c2[][\frac23][] = (c1)(r5)(r6)(r7);
		\endclusterpicture}~ and special fibre shown on the right.
\end{minipage}%
\smash{$\quad\quad$\raise -28pt\hbox{
\begin{minipage}{0.35\textwidth}
\vskip -15pt
\hskip -.5cm
  \includegraphics[height=2cm]{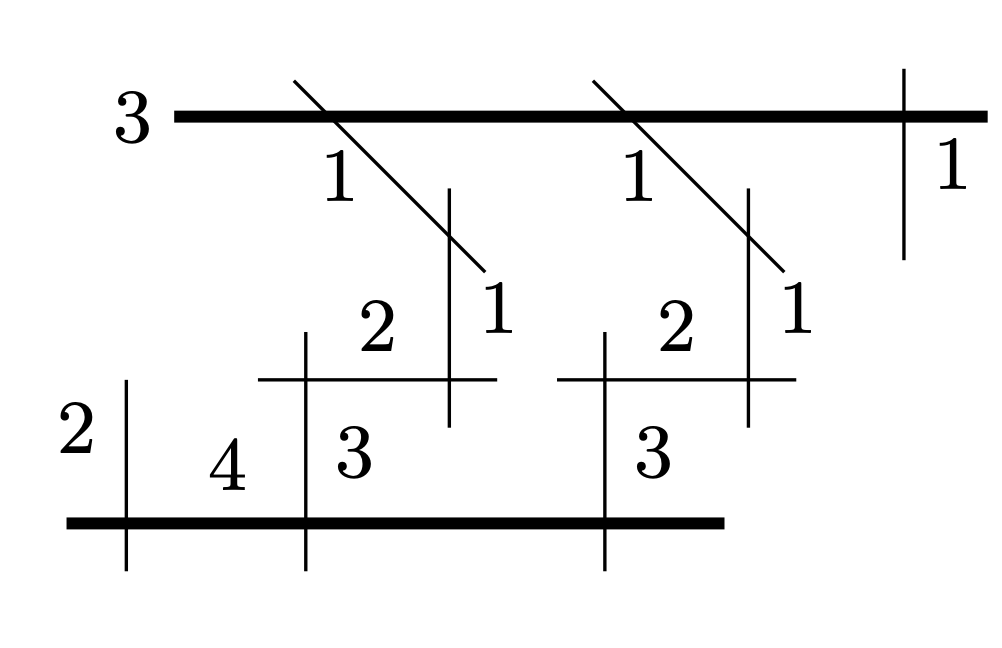}
\end{minipage}}
}

\newpage
\section{Tamagawa number (semistable case)}\label{se:tama}

Let $C/K : y^2 = f(x)$ be a \underline{semistable} hyperelliptic curve. The \emph{Tamagawa number} $c_{\Jac C}$ of the Jacobian of $C$ is the number of $k$-points of the component group-scheme of the special fibre of the N\'eron model of $\Jac C$. We explain how to read off $c_{\Jac C}$ from the cluster picture of $C$.

\begin{theorem}\label{thm:easyTamagawaNumbers}Suppose that $C$ has no \"ubereven clusters. For even clusters $\s\neq\cR$ write $c_\s=\leftchoice{2\delta_\s}{\text{if }\epsilon_\s(\Frob^{q_{\s}})=+1}{\gcd(2\delta_\s,2)}{\text{if }\epsilon_\s(\Frob^{q_\s})=-1}$, where $q_\s$ is the size of the $\Frob$-orbit of $\s$. 
The Tamagawa number of $\Jac C$ is given by $$c_{\Jac C} = \prod\nolimits_{\s} c_\s,$$ the product 
taken over representatives of $\Frob$-orbits of even clusters $\s\neq\cR$.
\end{theorem}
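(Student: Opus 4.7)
The strategy is to exploit the identification from Section~\ref{se:dual}: the geometric component group $\Phi$ of the N\'eron model of $\Jac C$ is Frobenius-equivariantly isomorphic to $\coker(H_1(\Upsilon_C,\Z) \xrightarrow{L} H^1(\Upsilon_C,\Z))$, where $L$ is induced by the length pairing. The Tamagawa number is then $c_{\Jac C} = |\Phi^{\Frob}|$, so it suffices to compute these fixed points.

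First I would invoke Theorem~\ref{thm:homology_of_dual_graph_semistable}(1) to write $H_1(\Upsilon_C,\Z) = \Z[A]$, where $A$ is the set of even non-\"ubereven clusters $\s\neq\cR$ (the non-\"uberevenness is automatic here). I would then simplify $\s^\ast = \s$ for all $\s \in A$: by hypothesis no cluster is \"ubereven, so no cluster has an \"ubereven parent, and for size reasons (a non-$\cR$ cotwin would have size $2g+1$, hence be odd) no $\s \in A$ is a cotwin. Consequently the length pairing specializes to a diagonal matrix: $\langle \ell_{\s_1}, \ell_{\s_2}\rangle = 0$ for distinct $\s_1,\s_2 \in A$, and $\langle \ell_\s, \ell_\s\rangle = 2\delta_\s$.

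Next I would describe $\Phi$ explicitly. Letting $\ell_\s^\ast$ denote the dual basis of $H^1$, the map $L$ sends $\ell_\s \mapsto 2\delta_\s \ell_\s^\ast$, so
\[
\Phi \;\cong\; \bigoplus_{\s \in A} (\Z/2\delta_\s\Z)\cdot \bar{\ell}_\s^\ast.
\]
Since the length pairing is Frobenius-invariant, Frobenius-equivariance of $L$ forces $\Frob(\bar{\ell}_\s^\ast) = \epsilon_\s(\Frob)\,\bar{\ell}_{\Frob(\s)}^\ast$, mirroring the formula for $H_1$.

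Finally I would compute $|\Phi^{\Frob}|$ one $\Frob$-orbit at a time. Fix an orbit $\{\s, \Frob\s, \ldots, \Frob^{q-1}\s\}$ of size $q=q_\s$; all members share the same relative depth $\delta := \delta_\s$. Setting $\eta_i := \epsilon_{\Frob^i\s}(\Frob)$, a direct recursion from $\Frob(a_0,\ldots,a_{q-1})=(\eta_{q-1}a_{q-1},\eta_0 a_0,\ldots,\eta_{q-2}a_{q-2})$ shows the fixed points within this orbit's contribution $(\Z/2\delta\Z)^q$ are parametrised by $a_0$ subject to $\bigl(1-\prod_{i=0}^{q-1}\eta_i\bigr)a_0 \equiv 0 \pmod{2\delta}$. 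The cocycle identity $\epsilon_\s(\sigma\tau) = \epsilon_\s(\tau)\,\epsilon_{\tau\s}(\sigma)$ (immediate from the defining formula $\epsilon_\s(\sigma)=\sigma(\theta_{\s^\ast})/\theta_{(\sigma\s)^\ast}$) telescopes to give $\prod_{i=0}^{q-1} \eta_i = \epsilon_\s(\Frob^{q_\s})$. Hence the orbit contributes $2\delta_\s$ fixed points when $\epsilon_\s(\Frob^{q_\s}) = +1$ and $\gcd(2\delta_\s, 2)$ otherwise — precisely $c_\s$. Multiplying over orbits yields the claimed formula.

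The most delicate step is tracking Frobenius through the duality to justify $\Frob(\bar{\ell}_\s^\ast) = \epsilon_\s(\Frob)\bar{\ell}_{\Frob(\s)}^\ast$; everything downstream is essentially linear algebra modulo $2\delta_\s$. A secondary care point is the reduction $\s^\ast = \s$: without the no-\"ubereven-clusters hypothesis, Theorem~\ref{thm:homology_of_dual_graph_semistable}(2) would force one to work with the corank-$1$ submodule and non-diagonal pairing entries, substantially complicating the counting.
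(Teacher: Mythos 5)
Your proof is correct, but it takes a genuinely different route from the paper's.

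The paper (in the remark following Theorem~\ref{thm:TamagawaNumbers}) derives Theorem~\ref{thm:easyTamagawaNumbers} as a special case of Theorem~\ref{thm:TamagawaNumbers}, the general BY-tree formula from~\cite{Betts}. Under the no-\"ubereven hypothesis there are no yellow vertices, so yellow edges are disjoint and in bijection with even clusters; one then checks that each connected component of $\hat B'\setminus B'$ is an interval with two blue endpoints, all $\tilde c(X)$ fall into case~(2)(b), the set $R$ is a singleton, and the factors $Q\prod\delta'(e'_j)/q_{e'_j}$ and $\tilde c$ give exactly the contributions from orbits with $\epsilon_\s(\Frob^{q_\s})=+1$ and $-1$ respectively.

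You instead bypass Theorem~\ref{thm:TamagawaNumbers} entirely and work directly from the Section~\ref{se:dual} description: $H_1(\Upsilon_C,\Z)=\Z[A]$ with diagonal length pairing $\langle\ell_\s,\ell_\s\rangle=2\delta_\s$ (using that $\s^*=\s$ for all $\s\in A$, which you correctly justify — no \"ubereven parents, and a cotwin $\neq\cR$ has size $2g+1$ hence is odd), so $\Phi\cong\bigoplus_{\s\in A}\Z/2\delta_\s\Z$, and then count Frobenius fixed points orbit-by-orbit via the cocycle telescoping $\prod_{i=0}^{q-1}\epsilon_{\Frob^i\s}(\Frob)=\epsilon_\s(\Frob^{q_\s})$. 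Every step is right: the Frobenius action on the dual basis $\Frob(\bar\ell_\s^\ast)=\epsilon_\s(\Frob)\bar\ell_{\Frob\s}^\ast$ follows from the inverse-dual action on cohomology (which is equivalent to, but not really ``forced by'', the $\Frob$-equivariance of $L$), and the count $\gcd(2,2\delta_\s)$ of solutions to $2a_0\equiv 0\pmod{2\delta_\s}$ correctly handles the half-integral depth case for twins. What your approach buys: it is self-contained modulo Theorem~\ref{thm:homology_of_dual_graph_semistable} and avoids the fairly heavy combinatorics of quotient BY trees ($\hat B'$, $R$, $\tilde c$, etc.) in Theorem~\ref{thm:TamagawaNumbers}. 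What the paper's buys: if you already have the general formula, the specialisation is essentially an exercise in unwinding notation, and it makes transparent why the two cases of $c_\s$ correspond to the two structurally different contributions ($R$ versus $\tilde c$) in the general theorem.
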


In general, when $C$ has \"ubereven clusters, the formula becomes significantly more complicated, and is best phrased in the language of BY trees.

\begin{notation}
	Let $T=T_C$ be the BY tree associated to $C$ (Definition~\ref{StoTC}), with edge-length function $\delta$.  
Let $B$ be the subgraph of $T$ consisting of blue vertices and blue edges, and $(F,\epsilon)\in\mathrm{Aut}\,T$ (see Definition \ref{actionb}) the induced action of $\mathrm{Frob}$ on $T$.

	For a vertex $v \in T \setminus B$ we write $q_v$ for the size of the $F$-orbit containing~$v$. We write $\epsilon_v = \prod_{j=0}^{q_v - 1} \epsilon(F^jX)$, where $X$ is the connected component of $T\setminus B$ containing~$v$. If $e \in T \setminus B$ is an edge then we define $q_e$ and $\epsilon_e$ similarly. We write $\hat{B} \subseteq T$ for the subgraph consisting of $B$ together with all vertices $v$ with $\epsilon_v = -1$ and edges $e$ with $\epsilon_e = -1$. Finally, we write $B' \subseteq \hat{B}' \subseteq T'$ for the respective quotients of $B \subseteq \hat{B} \subseteq T$ by the action of $F$, with length function $\delta'(e')=\delta(e)$ and with $q_{e'}=q_e$ for any edge $e \in T$ mapping to $e'$.
\end{notation}

\begin{thm}
\label{thm:TamagawaNumbers}
The Tamagawa number of $\mathrm{Jac}\,C$ is given by 
\[c_{\Jac C}= Q \cdot \tilde{c} \cdot \sum_{\{e_1',\dots,e_{r}'\} \in R}~~\prod_{j=1}^{r} \frac{\delta'(e_j')}{q_{e_j'}}, \qquad \text{where:}\]
\begin{enumerate}[leftmargin=*]
	\item $Q$ is the product of the sizes of all $F$-orbits of connected components of $\hat{B}$;
	\item $\tilde{c} \!=\! \prod_X \tilde{c}(X)$ is a product over the connected components $X$ of $\hat{B}'\setminus B'$ with
	\begin{enumerate}[leftmargin=12pt]
		\item $\tilde{c}(X) \!=\! 2^{\alpha-1}$ if the closure of $X$ contains $\alpha > 0$ points of $B'$ lying an even distance from a vertex of $\hat B'$ of degree $\geq 3$;
		\item $\tilde{c}(X) \!=\! \gcd(l,2)$ if the closure of $X$ consists of $2$ points of $B'$ distance $l$ apart;
		\item $\tilde{c}(X) \!= \!\gcd(b,2)$ otherwise, where $b$ is the number of points of $B'$ in the closure of $X$;
	\end{enumerate}
	\item $r = \# \pi_0(\hat{B}') - 1$ is the number of connected components of $\hat{B}'$, minus 1;
	\item $R$ is the set of unordered $r$-tuples
	of edges of $T'\setminus \hat{B}'$ whose removal disconnects the $r+1$ components of $\hat{B}'$
	from one another.
\end{enumerate}

\end{thm}
\begin{remark} Theorem \ref{thm:easyTamagawaNumbers} follows from Theorem \ref{thm:TamagawaNumbers}.
Since there are no \ub\ clusters there are no yellow vertices, and hence all yellow edges are disjoint and in bijection with even clusters. This implies that the closures of connected components of $\hat{B}' \setminus B'$ will always consist of two vertices, distance $2\delta_\s$ apart, and the $\tilde{c(X)}$ all fall in situation (2)(b). This is the contribution of orbits of even clusters with $\epsilon_\s(\Frob^{q_\s}) = -1$ in the formula of \ref{thm:TamagawaNumbers}. Furthermore, $R$ has size 1, the $r$-tuple of edges of $T'\setminus \hat{B}' $ which correspond to orbits of even clusters with $\epsilon_\s(\Frob^{q_\s})=1$, and so  $Q \prod \delta'(e_j')/q_{e_j'}$ is the contribution from clusters with $\epsilon_\s(\Frob^{q_\s})=1$. 
\end{remark}
\newpage

\begin{ex}
	\label{eg:concreteTamagawa}
	Consider $$C:y^2=(x^2-5)(x-1)(x-2)(x+1),$$ over $\Q_5$. Its cluster picture is 
	\clusterpicture            
  		\Root[A] {2} {first} {r1};
  		\Root[A] {} {r1} {r2};
  		\ClusterLDName c1[][1/2][\mathfrak{s}] = (r1)(r2);
  		\Root[A] {} {c1} {r3};
  		\Root[A] {} {r3} {r4};
  		\Root[A] {1} {r4} {r5};
  		\ClusterLD c2[][0] = (c1)(r5);
	\endclusterpicture
	with $\cR=\{\sqrt5,-\sqrt5,1,2,-1\}$ and $\s=\{\sqrt5,-\sqrt5\}$. 
	Then $\theta_\s=\sqrt{2}$, and $\epsilon_\mathfrak{s}(\mathrm{Frob})=-1$, 
	as $\sqrt{2}\notin\Q_5$. According to Theorem~\ref{thm:easyTamagawaNumbers}, the Tamagawa number of $\Jac C$ is $\gcd(1,2)=1$. The same value can be read off from the more general Theorem~\ref{thm:TamagawaNumbers}, using that the BY tree of $C$ is
	\smash{\raise-10pt\hbox{\begin{tikzpicture}[scale=\GraphScale]
		\InfVertices
		\VertexLN[x=0.75,y=0,L=\relax]{3}{}
  		\BlueVertices
  		\VertexLS[x=1.50,y=0.000,L=\relax]{1}{};
  		\VertexLN[x=0.000,y=0.000,L=1]{2}{};
  		\YellowEdges
  		\Edge(1)(2)
	        \TreeEdgeSignS(1)(2){0.5}{1}
		\TreeEdgeSignN(1)(2){0.5}{-}
	\end{tikzpicture}}} with trivial $F$-action.
\end{ex}

\begin{example}
Suppose that the cluster picture of $C$ is \clusterpicture            
  		\Root[A] {2} {first} {r1};
  		\Root[A] {} {r1} {r2};
  		\ClusterLD c1[][a/2] = (r1)(r2);
  		\Root[A] {} {c1} {r3};
  		\Root[A] {} {r3} {r4};
  		\ClusterLD c2[][b/2] = (r3)(r4);
  		\Root[A] {1} {c2} {r5};
  		\Root[A] {1} {r5} {r6};
  		\ClusterLD c3[][c/2] = (r5)(r6);
  		\ClusterLD c4[][0] = (c1)(c2)(c3);
	\endclusterpicture, with $\Frob$ acting trivially on clusters and $\epsilon_\s(\Frob)=+1$ for all clusters. In particular, $\hat{B} = B$ and the quotients $B', T'$ can be identified respectively with $B, T$. Using Theorem~\ref{thm:TamagawaNumbers}, we find that the Tamagawa number of $\Jac C$ is $ab+bc+ca$ (we leave details to the reader).
\end{example}

\begin{example}
	\label{eg:abstractposeps}
	Let $C/\QQ_p$ be a curve with BY tree as below. A concrete example would be $C/\QQ_p$ with $p \equiv 3 \mod 4$ and $$C: y^2 = ((x^2+1)^2 - p^{u})((x-1)^2 - p^{z})(x-p^{w/2})(x-p^{w/2+2})((x^2+p^{w+4})^2 - p^{2(w + 4) + a}),$$ with $w \equiv 2 \mod 4$ and $a > w+4$.
	\vskip -8pt
	\begin{figure}[ht]
	\begin{tikzpicture}[scale=\GraphScale]
		\InfVertices
		\VertexLN[x=0.25,y=-0.43,L=\relax]{100}{};
		\VertexLN[x=2.75,y=-0.43,L=\relax]{101}{};
		\BlueVertices
		\VertexLNW[x=3.00,y=0.800,L=\relax]{2}{};
		\VertexLNW[x=4, y=.2, L=\relax]{6}{};
		\VertexLNE[x=0,y=0.80,L=\relax]{4}{};
		\VertexLNE[x=0.000,y=-0.400,L=\relax]{3}{};
		\VertexLNW[x=3, y=-0.4, L=\relax]{5}{};
		\VertexLW[x=6, y=0.8, L=\relax]{7}{};
		\VertexLW[x=6,y=-0.4,L=\relax]{8}{};
		\YellowVertices
		\Vertex[x=1.50,y=0.150,L=\relax]{1};
		\YellowEdges
		\Edge(1)(3)
		\Edge(1)(2)
		\Edge(1)(4)
		\Edge(1)(5)
		\Edge(6)(7)
		\Edge(6)(8)
		 \TreeEdgeSignS(1)(3){0.7}{z}
		 \TreeEdgeSignS(1)(2){0.7}{u}
		\TreeEdgeSignS(1)(4){0.7}{u}
		\TreeEdgeSignS(1)(5){0.7}{w}
		\TreeEdgeSignS(1)(7){.9}{a}
		\TreeEdgeSignS(1)(8){.9}{a}
		\TreeEdgeSignN(1)(7){.9}{-}
		\TreeEdgeSignN(1)(8){.9}{-}
		\TreeSignAt(1)(0,.3){+}
		\BlueEdges
		\Edge(5)(6)
		\ESwap1214{in=60, out=120}
		\ESwap6768{in=90, out=-90}
	\end{tikzpicture}
	\end{figure}
	\vskip -10pt
	
	Label the edges $e_u^{\pm}, e_w, e_z, e_a^{\pm}$ where $e_w$ has length $w$ and so on. Since $\epsilon_v = \epsilon_e = 1$ for all vertices $v$ and edges $e$, $\hat{B} = B$, and $T'$ and $B'$ are given by the following picture, with $\hat{B}' = B'$:
	
	\vskip -14pt
	\begin{figure}[ht]
		\begin{tikzpicture}[scale=\GraphScale]
		\BlueVertices
		\VertexLNW[x=4, y=0, L=\relax]{6}{};
		\VertexLE[x=1.5,y=0.80,L=\relax]{4}{};
		\VertexLNE[x=6, y=0, L=\relax]{7}{};
		\VertexLNE[x=0.000,y=-0.400,L=\relax]{3}{};
		\VertexLNW[x=3, y=-0.4, L=\relax]{5}{};
		\YellowVertices
		\Vertex[x=1.50,y=0.000,L=\relax]{1};
		\YellowEdges
		\Edge(1)(3)
		\Edge(1)(4)
		\Edge(1)(5)
		\Edge(6)(7)
		\BlueEdges
		\Edge(5)(6)
		\TreeEdgeSignS(1)(5){.5}{w}
		\TreeEdgeSignS(1)(3){.5}{z}
		\TreeEdgeSignW(1)(4){.5}{u}
		\TreeSignAt(1)(0,-.35){+}
		\TreeEdgeSignS(6)(7){.5}{a}
		\TreeEdgeSignN(6)(7){.5}{+}
		\end{tikzpicture}
	\end{figure}
	\vskip -10pt
	
 There are four $F$-orbits in $\hat{B}$, two of size $1$ and two of size $2$. Therefore $Q=4$. The set $\hat{B}'\setminus B'$ is empty and so $\tilde{c} = 1$. Finally $r=3$, and the set $R = \left\{\{e_a', e_u', e_z'\}, \{e_a', e_z', e_w'\}, \{e_a', e_w', e_u'\}\right\}$ where $e_a'$ is the image of $e_a^{\pm}$ and so on. Putting this together we see $$ c_{\Jac C} = 2 \cdot 1 \cdot \left(\frac{a}{2}\cdot \frac{u}{2} \cdot z + \frac{a}{2}\cdot z \cdot w + \frac{a}{2}\cdot\frac{u}{2}\cdot w\right) = a(uz + 2zw + uw).$$
\end{example}

\begin{example}
	\label{eg:abstractnegeps}
	
	Consider the BY tree as in Example \ref{eg:abstractposeps}, but where $\epsilon = -1$ for each component instead of $1$. The edges $e_a^{\pm}$ and $e_u^{\pm}$ lie in $F$-orbits of size $2$ so $\epsilon_{e_a^{\pm}} = \epsilon_{e_b^{\pm}} = 1$, and $e_z$ and $e_w$ lie in an orbit of size $1$ so $\epsilon_{e_w} = \epsilon_{e_z} = -1$. The graphs $B'$ and $T'$ are as above, and $\hat{B}'$ is given by
	
	\vskip -10pt
	\begin{figure}[ht]
		\begin{tikzpicture}[scale=\GraphScale]
		\BlueVertices
		\VertexLNW[x=4, y=0, L=\relax]{6}{};
		\VertexLNE[x=1.5,y=0.80,L=\relax]{4}{};
		\VertexLNE[x=0.000,y=-0.400,L=\relax]{3}{};
		\VertexLNW[x=3, y=-0.4, L=\relax]{5}{};
		\VertexLNW[x=6, y=0, L=\relax]{7}{};
		\YellowVertices
		\Vertex[x=1.50,y=0.000,L=\relax]{1};
		\YellowEdges
		\Edge(1)(3)
		\Edge(1)(5)
		\BlueEdges
		\Edge(5)(6)
		\TreeEdgeSignS(1)(5){.5}{w}
		\TreeEdgeSignS(1)(3){.5}{z}
		\end{tikzpicture}
	\end{figure}
	\vskip -10pt
	
	There are three $F$-orbits of components in $\hat{B}$, one of size $1$ and two of size $2$, and hence $Q = 4$. There is one connected component $X' \in \hat{B}' \setminus B'$ and so $\tilde{c} = \tilde{c}(X')$ is non-trivial. We have assumed that $w$ is even, and so $\tilde{c} = \gcd(z+w, 2) = \gcd(z,2)$ as $X'$ consists of 2 points of $B'$ a distance $z+w$ apart. Finally, $r=2$ and $R = \{\{e_a, e_u\}\}$. Putting this all together $$c_{\Jac C} = 4 \cdot \frac{a}{2} \cdot \frac{u}{2} \cdot \gcd(z,2) = a u \gcd(z,2).$$

\end{example}

\newpage

\section{Galois representation}
\label{se:GalRep}

In this section, we will describe the Galois action on the $\ell$-adic \'{e}tale cohomology of the curve (equivalently its Jacobian) when $\ell \neq p$. For an arbitrary curve (or abelian variety), there always exists a decomposition of $\ell$-adic Galois representations $$\H(C/\bar{K},\Ql) = \H(\Jac C/\bar{K},\Ql) = H^1_{ab} \oplus \left( H^1_t \otimes \sp \right)$$ into ``abelian'' and ``toric'' parts, where for $\sigma \in I_K$, $\sp(\Frob^n\sigma)=\begin{pmatrix} 1 & t_{\ell}(\sigma) \\ 0 & q^{-n}\end{pmatrix}$ for a choice of tame $\ell$-adic character $t_{\ell}$, and with $q = |k|$. We will describe the abelian and toric parts in terms of the cluster picture.

\begin{theorem}
\label{GalRepthm}
Let $C/K$ be a hyperelliptic curve and let $\ell \neq p$ be prime. Let
\begin{eqnarray*}
	X &=& \{ \text{proper,  non-\"{u}bereven clusters } \s \}, \\
	Y &=& \{ \text{principal,  non-\"{u}bereven clusters } \s \} \subseteq X.
\end{eqnarray*}
Write $\bar{\epsilon}_\s$ for the restriction of $\epsilon_\s$ to $G_\s$.
\begin{enumerate} 
\item For all $\s \in Y$, there exists a continuous $\ell$-adic representation, $V_{\s}$, with finite image of inertia such that:
\begin{eqnarray*}
	H^1_{ab} &=& \bigoplus\limits_{\s \in Y/G_K} \Ind_{G_{\s}}^{G_K} V_{\s}, \\
	H^1_t &=& \bigoplus\limits_{\s \in X/G_K} \Ind_{G_{\s}}^{G_K} \bar{\epsilon}_{\s} \qquad \ominus \bar{\epsilon}_{\cR},	
\end{eqnarray*}
where $G_{\s}=\Stab_{G_K}(\s)$ is the Galois stabiliser of $\s$, and $\ominus$ is the inverse of the direct sum $\oplus$.

\item Let  $I_{\s}=\Stab_{I_K}(\s)$ be the inertia stabiliser of $\s$ and let $\gamma_{\s}: I_{\s} \rightarrow \bar\Q_{\ell}^{\times}$ be any character whose order is the prime-to-$p$ part of the denominator of $[I_K:I_{\s}] \tilde\lambda_{\s}$. Then for all $\s \in Y$, there is an isomorphism $$V_{\s} \cong \gamma_{\s} \otimes (\Ql[\tilde\s] \ominus \triv) \,\,\,\, \ominus \bar{\epsilon}_{\s} \qquad \text{ as $I_{\s}$-representations,}$$
where $\tilde{\s}$ is the set of odd children of $\s$ with $I_{\s}$-action.
\end{enumerate}
\end{theorem}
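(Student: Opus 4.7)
The plan is to reduce to the semistable case by base change to a finite extension $F/K$ over which $C$ acquires semistable reduction (a ramified quadratic extension of $K(\cR)$ suffices, by Theorem \ref{semistability criterion subsection} and Remark \ref{field of semistability remark}), and then descend. Over $F$, the Grothendieck monodromy filtration supplies the canonical abelian-toric splitting $H^1(C_{\bar{K}},\Ql) = H^1_{ab} \oplus (H^1_t \otimes \sp)$, in which $H^1_t$ is $\Ql$-dual to the character lattice of the torus part of the N\'eron model of $\Jac C$, canonically $H_1(\Upsilon_C,\Ql)$, and $H^1_{ab}$ is the $\ell$-adic cohomology of the abelian variety part, which is the product $\prod_{\Gamma} H^1(\Jac\widetilde{\Gamma})$ over the irreducible components $\Gamma$ of $\Cmink$ (with $\widetilde{\Gamma}$ the normalisation). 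Everything is $G_K$-equivariant, so the task reduces to tracing the $G_K$-action through these identifications.

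For the toric part, Theorem \ref{thm:homology_of_dual_graph_semistable} writes $H_1(\Upsilon_C,\mathbb{Z})$ as the free module $\mathbb{Z}[A]$ (or its corank-$1$ submodule when $\cR$ is \"{u}bereven), with Galois action $\sigma\cdot\ell_\s = \epsilon_\s(\sigma)\ell_{\sigma\s}$. Tensoring with $\Ql$ and applying Frobenius reciprocity yields $\Ql[A] = \bigoplus_{\s\in A/G_K}\Ind_{G_\s}^{G_K}\bar{\epsilon}_\s$. Odd proper non-cotwin clusters have $\bar{\epsilon}_\s=0$ and contribute nothing, while \"{u}bereven clusters are excluded from $X$, so the sum over $X/G_K$ in the theorem equals the sum over $A/G_K$, and the $\ominus\bar{\epsilon}_\cR$ either cancels the contribution of $\cR\in X$ (the non-\"{u}bereven case) or cuts down to the corank-$1$ submodule (the \"{u}bereven case).

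For the abelian part, Theorem \ref{section 6 big theorem} shows each principal non-\"{u}bereven cluster $\s$ gives a component $\Gamma_\s$ whose normalisation is the hyperelliptic curve $Y^2 = c_\s \prod_{\mathfrak{o}\in\tilde{\s}}(X-\operatorname{red}_\s(\mathfrak{o}))$, and these are permuted by $G_K$ according to its action on principal clusters. Setting $V_\s = H^1(\widetilde{\Gamma}_{\s,\bar{k}},\Ql)$ yields the induction formula for $H^1_{ab}$; since $\widetilde{\Gamma}_\s$ has good reduction over $F$, inertia acts on $V_\s$ through $I_K/I_F$, which is finite. For part (2), the substitution needed to exhibit good reduction of the $\s$-piece over $F$ has the form $x \mapsto \pi_F^{a}x' + z_\s$, $y \mapsto \pi_F^{b}y'$ with $a,b$ controlled by the denominators of $d_\s$ and $\tilde{\lambda}_\s$ respectively, so the $I_\s$-action on the transformed uniformisers is twisted by a root-of-unity character whose order is the prime-to-$p$ part of the denominator of $[I_K:I_\s]\tilde{\lambda}_\s$: this is $\gamma_\s$. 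The isomorphism $V_\s\cong\gamma_\s\otimes(\Ql[\tilde{\s}]\ominus\triv)\ominus\bar{\epsilon}_\s$ then follows from the classical identification of $H^1$ of a hyperelliptic Jacobian $y^2=c\prod_{\alpha\in S}(x-\alpha)$ as $\Ql[S]$ modulo a trivial summand, with the extra $\ominus\bar{\epsilon}_\s$ correction in the even case tracking whether $c_\s$ is a square in $\bar{k}$.

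The main obstacle is pinning down $\gamma_\s$ precisely: that inertia acts through a finite root-of-unity twist is clear from potential good reduction, but the exact denominator condition on $[I_K:I_\s]\tilde{\lambda}_\s$ comes from chasing how the equation of $\Gamma_\s$ transforms under the uniformiser substitution realising semistability, ultimately requiring an explicit Newton-polygon style analysis linking the rational depths to fractional powers of $\pi_F$. Secondary bookkeeping arises in the \"{u}bereven case, where $\Gamma_\s$ splits as $\Gamma_\s^{\pm}$ and Frobenius may swap them; reconciling this with the formula for $H^1_{ab}$ requires defining $V_\s$ functorially on $\Gamma_\s^+\sqcup\Gamma_\s^-$ and matching signs against $\epsilon_\s(\Frob)$ rather than treating either component alone.
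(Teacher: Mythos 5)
The paper does not contain a proof of this theorem; it cites \cite[Theorem 1.19]{m2d2}, and the introduction explicitly states that ``we give no proofs.'' So your proposal can only be assessed on its own terms. Your high-level strategy --- base change to the semistable field $F$, decompose $H^1$ via the monodromy filtration into the toric part (controlled by $H_1(\Upsilon_C)$) and the abelian part (controlled by cohomology of normalised components), then descend to $K$ --- is the correct skeleton and is almost certainly the route taken in \cite{m2d2}. But the steps you call secondary are actually the substance of the theorem.

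Three concrete gaps. First, the ``classical identification'' you invoke to derive $V_\s \cong \gamma_\s\otimes(\Ql[\tilde\s]\ominus\triv)\ominus\bar\epsilon_\s$ is not a classical fact: $H^1$ of a hyperelliptic Jacobian over $\bar k$ is a $2g(v_\s)$-dimensional space with no built-in $\Ql[\tilde\s]$-structure. The claimed isomorphism is a statement about the action of the finite cyclic group $I_\s/I_F$ by semilinear automorphisms on the special fibre component, twisting $x$, $y$, and permuting the odd children. Pinning this down requires explicitly computing the action on the basis $x^i\,dx/y$ of differentials (or a Lefschetz-trace argument) together with the $c_\s$-square correction; this is where both the shape $\Ql[\tilde\s]\ominus\triv$ and the precise order of $\gamma_\s$ come from, and you flag only the latter as the obstacle. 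Second, Theorem \ref{thm:homology_of_dual_graph_semistable} describes only the \emph{Frobenius} action on $H_1(\Upsilon_C,\ZZ)$; to apply Frobenius reciprocity and obtain $\bigoplus_{\s}\Ind^{G_K}_{G_\s}\bar\epsilon_\s$ you need the full $G_K$-action, and when $C$ is not semistable over $K$ the inertia action of $I_K/I_F$ on $\Upsilon_C$ must be separately established to be given by the same formula $\sigma\cdot\ell_\s=\epsilon_\s(\sigma)\ell_{\sigma\s}$. Third, your bookkeeping ``the sum over $X/G_K$ equals the sum over $A/G_K$'' forgets cotwins: a cotwin $\c\ne\cR$ (when $|\cR|=2g+2$ and $\cR$ has a child of size $2g+1$) is \emph{odd}, hence not in $A$, but has $\bar\epsilon_\c\ne 0$ and lies in $X$, so it contributes to the stated formula. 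Reconciling such cotwin terms with the $\ZZ[A]$ model of $H_1(\Upsilon_C)$ is precisely the kind of delicate cancellation your argument does not address.
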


\begin{remark}\label{ssrepp}
The full Galois module structure of $V_{\s}$ cannot be determined by the cluster picture alone; indeed two curves with good reduction can have the same cluster picture but different Galois representations. It is however computable via a form of point-counting over finite fields; see \cite[Theorem 1.5 and Example 1.9]{skew} for the statement and a worked example.

On the other hand, Theorem \ref{GalRepthm}(2) gives an explicit description of the inertia representation. For tame curves, it is completely determined by the underlying abstract cluster picture (in the sense of Section \ref{se:iso}) without needing to know the inertia action on the roots a priori.
\end{remark}

\begin{remark}
\label{ssrep}
The Jacobian $\Jac C$ (equivalently $C$) is semistable if and only if both $H^1_{ab}$ and $H^1_t$ are unramified. If moreover $H^1_t$ is the zero representation, then this is equivalent to $\Jac C$ having good reduction. Recall from Section \ref{se:redtype} that these conditions are easy to read off from the cluster picture of $C$. 
\end{remark}

\newpage

\begin{notation} 
For a cluster $\s$, we let $I_{\s}$ denote the inertia stabiliser. If $n$ is coprime to $p$, we further write $\bar\Q_{\ell}[C_{n,\s}]$ to mean the $I_{\s}$-representation $\bar\Q_{\ell}[I_{\s}/I_n]$ where $[I_{\s}:I_n]=n$, and let $\chi_{n,\s}$ be a fixed faithful character of $I_{\s}/I_n$. We shall omit the cluster subscript when $\s=\cR$.

\end{notation}

\begin{example}
Let $\zeta_3$ be a primitive cube root of unity and consider the curve $C/\Q_7: y^2=x((x-7^{1/2})^3-7^{5/2})((x+7^{1/2})^3+7^{5/2})$, with cluster picture
$$\clusterpicture
  \Root[A] {} {first} {r1};
  \Root[A] {} {r1} {r2};
  \Root[A] {} {r2} {r3};
  \Root[A] {4} {r3} {r4};
  \Root[A] {} {r4} {r5};
  \Root[A] {} {r5}{r6};
  \Root[A] {3} {r6}{r7};
  \ClusterLDName c1[][\frac{1}{3}][\s_1] = (r1)(r2)(r3);
  \ClusterLDName c2[][\frac{1}{3}][\s_2] = (r4)(r5)(r6);
  \ClusterLDName c3[][\frac{1}{2}][\cR] = (c1)(c2)(r7);
\endclusterpicture, \s_1=\{7^{1/2}{+}\zeta_3^j 7^{5/6} \, | \, j=0,1,2 \}, \s_2=\{-7^{1/2}{-}\zeta_3^j 7^{5/6} \, | \, j=0,1,2\},$$
and $\cR =\s_1 \cup \s_2 \cup \{0\}$. In this case inertia acts on $\cR$ through a $C_6$-quotient and permutes $\s_1$ and $\s_2$. We shall compute the inertia action on $\H(C/\bar\Q_7,\bar\Q_{\ell})$. Note that every cluster is odd: this implies that there is no contribution from the toric part, i.e. $H^1_t=0$, and also that $V_{\s} \cong \gamma_{\s} \otimes (\bar\Q_{\ell}[\tilde\s] \ominus \triv)$ by definition of $\epsilon_{\s}$. Moreover, every proper cluster is principal and hence we choose representatives for $Y/I_{\Q_7}$ to be $\s_1$ and $\cR$.

First consider $V_{\cR}$. We compute that $\tilde\lambda_{\cR}=3/4$ hence $\gamma_{\cR}$ is an order $4$ character $\chi_4$. Therefore $\bar\Q_{\ell}[\tilde\cR] \cong \triv \oplus \bar\Q_{\ell}[C_2]$ and hence $V_{\cR} \cong \chi_4 \otimes \bar\Q_{\ell}[C_2] \cong \chi_4 \oplus \chi_4^{-1}$.

Next we compute $V_{\s_1}$. In this case, $\tilde{\lambda}_{\s_1}=9/4$ and hence $\gamma_{\s_1}$ is an order $2$ character $\chi_{2,\s_1}$ since $[I_{\Q_7}:I_{\s_1}]=2$. Now $I_{\s_1}$ cyclically permutes the children of $\s_1$ so $\bar\Q_{\ell}[\tilde{\s}_1] \cong \bar\Q_{\ell}[C_{3,\s_1}] \cong \triv \oplus \chi_{3,\s_1} \oplus \chi_{3,\s_1}^{-1}$, hence $V_{\s_1} \cong \chi_{6,\s_1} \oplus \chi_{6,\s_1}^{-1}$.

We must now induce this to $I_{\Q_7}$; since $I_{\Q_7}/I_{\s_1} \cong C_2$ we find that $\Ind_{I_{\s_1}}^{I_{\Q_7}} V_{\s_1} \cong \chi_{12} \oplus \chi_{12}^5 \oplus \chi_{12}^7 \oplus \chi_{12}^{11}$. One therefore has that for all $\ell \neq 7$, $$\H(C/\bar\Q_7,\bar\Q_{\ell}) = \chi_4 \oplus \chi_4^{-1} \oplus \chi_{12} \oplus \chi_{12}^5 \oplus \chi_{12}^7 \oplus \chi_{12}^{11}, \quad \text{ as $I_{\Q_7}$-representations.}$$
\end{example}

\begin{example}
Let $C/\Q_3$ be the curve $y^2=(x-1)((x-3)^2+81)((x+3)^2+81)$, whose cluster picture is:
$$
\clusterpicture
  \Root[A] {} {first} {r1};
  \Root[A] {} {r1} {r2};
  \Root[A] {4} {r2} {r3};
  \Root[A] {} {r3} {r4};
  \Root[A] {4} {r4} {r5};
  \ClusterLDName c1[][1][\mathfrak{t}_1] = (r1)(r2);
  \ClusterLDName c2[][1][\mathfrak{t}_2] = (r3)(r4);
  \ClusterLDName c3[][1][\s] = (c1)(c2);
  \ClusterLDName c3[][0][\cR] = (c3)(r5);
\endclusterpicture \quad \text{ where } \mathfrak{t}_1=\{3\pm 9i\}, \mathfrak{t}_2=\{-3 \pm 9i \}, \s=\mathfrak{t}_1 \cup \mathfrak{t}_2, \cR = \mathfrak{s} \cup \{1\},$$ and $i$ is a fixed square root of $-1$. One can check that $C$ is semistable (see Theorem \ref{semistability criterion subsection}) and we shall confirm this on Galois representation side via Remark \ref{ssrep}.

Note that Galois acts trivially on the proper clusters and the only \"{u}bereven cluster is $\s$ so $X/G_{\Q_3}=\{\mathfrak{t}_1, \mathfrak{t}_2, \cR \}$. Moreover, none of these are principal ($\cR$ is a cotwin) hence the abelian part is 0; this is expected as the Jacobian has totally toric reduction (cf. Theorem \ref{potential toric rank}), and so $\H(C/\bar\Q_3,\Ql) = (\epsilon_{\mathfrak{t}_1} \oplus \epsilon_{\mathfrak{t}_2}) \otimes \sp$.

Using $z_{\mathfrak{t}_1}=3$ as a centre of $\mathfrak{t}_1$, one computes that $\theta_{\mathfrak{t}_1}^2=234 = 2 \cdot 3^2 \cdot 13$. This implies that $\theta_{\mathfrak{t}_1}$ is fixed by inertia and negated by Frobenius and therefore $\epsilon_{\mathfrak{t}_1}$ is the unramified quadratic character $\eta$. Similarly, we find that $\theta_{\mathfrak{t}_2}^2=-468$ (using the centre $-3$) and hence $\epsilon_{\mathfrak{t}_2}=\epsilon_{\mathfrak{t}_1}=\eta$. Since $H^1_{ab}$ and $H^1_t$ are both unramified, the curve is semistable (as expected) and we have that for all $\ell \neq 3$, $$\H(C/\bar\Q_3,\Ql) = \eta^{\oplus 2} \otimes \sp \qquad \text{as $G_{\Q_3}$-representations.}$$
\end{example}

\newpage
\section{Conductor}
\label{se:Cond}

In this section, we describe the conductor exponent of $\Jac C$, which we shall denote by $n_C$.
\begin{theorem}
\label{sscond}
Suppose $C/K$ is semistable. Then $$n_C=\begin{cases} \#A -1 & \text{ if $\cR$ is \"{u}bereven,} \\ \#A & \text{ else,} \end{cases}$$ where $A=\{\text{even clusters } \s  \neq \cR \, \mid \, \s \text{ is not \"{u}bereven} \}.$
\end{theorem}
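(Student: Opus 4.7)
The plan is to deduce the formula in three stages: relate the conductor exponent to invariants of the $\ell$-adic Galois representation, use the abelian/toric decomposition from Section \ref{se:GalRep} to evaluate these invariants, and finish with the potential toric rank formula of Theorem \ref{potential toric rank}.

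For any abelian variety $A/K$ and any prime $\ell \neq p$, the conductor exponent satisfies $n_A = \dim V_\ell A - \dim (V_\ell A)^{I_K} + \delta_A$, where $\delta_A$ is the Swan (wild) conductor. When $C/K$ is semistable, Grothendieck's criterion ensures that $I_K$ acts unipotently on $V_\ell(\Jac C)$; since wild inertia is pro-$p$ and any pro-$p$ subgroup of a unipotent subgroup of $\mathrm{GL}_{2g}(\overline{\Q}_\ell)$ (with $\ell\neq p$) must act trivially, we obtain $\delta_{\Jac C}=0$. Hence $n_C = \dim V - \dim V^{I_K}$ for $V = \H(C/\bar{K},\Ql)$.

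Next, Theorem \ref{GalRepthm} provides the decomposition $V = H^1_{ab}\oplus (H^1_t\otimes \sp)$, and by Remark \ref{ssrep} semistability implies that both $H^1_{ab}$ and $H^1_t$ are unramified. The matrix presentation of $\sp$ recalled at the start of Section \ref{se:GalRep} shows that $\sp(\sigma)$ is upper triangular unipotent for $\sigma\in I_K$, so $\dim \sp^{I_K}=1$. Consequently
\[
\dim V^{I_K} = \dim H^1_{ab} + \dim H^1_t \cdot \dim \sp^{I_K} = \dim H^1_{ab} + \dim H^1_t,
\]
and subtracting from $\dim V = \dim H^1_{ab} + 2\dim H^1_t$ gives $n_C = \dim H^1_t$. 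This dimension is, by construction, the toric rank of the N\'eron model of $\Jac C$.

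Finally, since $\Jac C$ is already semistable its toric rank coincides with its potential toric rank. Theorem \ref{potential toric rank} expresses the latter as the number of even non-\"ubereven clusters distinct from $\cR$ (namely $\#A$), reduced by $1$ exactly when $\cR$ itself is \"ubereven, which matches the two cases in the statement. The only step requiring arithmetic input beyond the references cited is the vanishing of the Swan conductor in the semistable case; everything else is dimension bookkeeping on already-established decompositions, and I expect this combinatorial translation to be the least delicate part of the argument.
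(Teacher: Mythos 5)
Your argument is correct, and it is the natural route to this statement. Semistability makes $\Jac C$ tame (Remark~\ref{tame remark}), so the Swan conductor vanishes -- your Grothendieck-unipotence reasoning gives the same conclusion, though the phrasing ``any pro-$p$ subgroup of a unipotent subgroup must act trivially'' should really read that any \emph{continuous homomorphism} from a pro-$p$ group into the unipotent radical of $\mathrm{GL}_{2g}(\overline{\Q}_\ell)$ is trivial when $\ell\neq p$; the subtlety is in the continuity, not in any subgroup containment. With $n_{\mathrm{wild}}=0$ the conductor reduces to $\dim V-\dim V^{I_K}$, and your dimension count on $H^1_{ab}\oplus(H^1_t\otimes\sp)$ correctly yields $\dim H^1_t$. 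The identification of $\dim H^1_t$ with the toric rank of the N\'eron model, and the observation that toric rank agrees with potential toric rank once $\Jac C$ is already semistable (N\'eron models commute with base change in the semistable setting), are standard; plugging in Theorem~\ref{potential toric rank} then gives exactly $\#A$ or $\#A-1$. The paper itself offers no proof -- it cites \cite[Theorem~1.20]{m2d2} for both this theorem and the general formula of Theorem~\ref{allcond} -- so a line-by-line comparison is not possible, but your argument is essentially the classical ``conductor equals toric rank'' proof for semistable abelian varieties combined with the cluster-theoretic formula for potential toric rank, and it sidesteps the heavier combinatorial bookkeeping that specializing Theorem~\ref{allcond} would require. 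That is a perfectly sound deduction within the framework the paper sets up.
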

\medskip
For general $C$ the formula for the conductor is more involved. 
\begin{notation} For a proper cluster $\s$ we define $\xi_{\s}(a)$ to be the $2$-adic valuation of the denominator of $[I_K:I_{\s}]a$, where $I_{\s}$ is the stabiliser of $\s$ under $I_K$, with the convention that $\xi_{\s}(0)=0$. More formally, it is $\xi_{\s}(a) = \max \{ -\ord_2([I_K:I_{\s}]a) ,0\}$.

For a cluster picture associated to a curve $C/K$, we further define
\begin{eqnarray*}
	U &=& \{ \text{odd clusters } \s \neq \cR \mid \xi_{P(\s)}(\tilde\lambda_{P(\s)}) \leq \xi_{P(\s)}(d_{P(\s)}) \}, \\
	V &=& \{ \text{proper non-\"{u}bereven clusters } \s \mid \xi_{\s}(\tilde\lambda_{\s})=0 \}.
\end{eqnarray*}

\end{notation}

\medskip
\begin{theorem}
\label{allcond}
Let $C/K$ be a hyperelliptic curve. Decompose the conductor exponent $n_C$ of $\operatorname{Jac} C$ into tame and wild parts as $n_C=n_{\mathrm{tame}}+n_{\mathrm{wild}}$. Then:\medskip
\begin{enumerate}[leftmargin=*]
\item $n_{\mathrm{tame}} = 2g - \#(U/I_K) + \#(V/I_K) + \begin{cases} 1  &\text{ if $|\cR|$ and $v(c)$ are even,} \\ 0 &\text{ else;} \end{cases}$
\medskip
\item $n_{\mathrm{wild}} = \sum\limits_{r \in \cR/G_K} \left( v(\Delta_{K(r)/K}) - [K(r):K] + f_{K(r)/K} \right),$\\ where $\Delta_{K(r)/K}$ and $f_{K(r)/K}$ are the discriminant and residue degree of $K(r)/K$ respectively.
\end{enumerate}
\end{theorem}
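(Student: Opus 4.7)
The plan is to verify the tame and wild contributions separately, using the decomposition
$$H^1(\Jac C, \Ql) = H^1_{ab} \oplus (H^1_t \otimes \sp)$$
from Theorem \ref{GalRepthm}, together with the standard splitting $n_C = \epsilon(V) + \mathrm{Sw}(V)$ of the conductor into its tame part $\epsilon(V) = 2g - \dim V^{I_K}$ and Swan part, each additive under direct sums.

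For the tame part, I would compute $\dim V^{I_K}$ summand by summand. A direct calculation on $\sp$ shows $(H^1_t \otimes \sp)^{I_K} = H^1_t^{I_K}$, since $\sp^{I_K}$ is one-dimensional. Using Theorem \ref{GalRepthm}(1) and Frobenius reciprocity, $\dim H^1_t^{I_K}$ reduces to counting orbits of clusters $\s \in X$ whose restriction of $\bar\epsilon_\s$ to $I_\s$ is trivial; the condition defining $U$ is designed to pick out exactly the odd children $\s$ whose parent's $\bar\epsilon_{P(\s)}$-character becomes trivial on the relevant stabiliser, after accounting for the twist by $\tilde\lambda_{P(\s)}$. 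For the abelian part, Theorem \ref{GalRepthm}(2) gives $V_\s|_{I_\s} \cong \gamma_\s \otimes (\Ql[\tilde\s] \ominus \triv) \ominus \bar\epsilon_\s$, and taking $I_\s$-invariants amounts to computing the multiplicity of $\gamma_\s^{-1}$ in $\Ql[\tilde\s]$ minus small corrections. The set $V$ consists exactly of those $\s$ for which $\gamma_\s$ becomes trivial (the 2-part of the order of $\gamma_\s$ vanishes iff $\xi_\s(\tilde\lambda_\s)=0$), producing the $+\#(V/I_K)$ term. The correction $+1$ in the case $|\cR|$ and $v(c)$ both even tracks the ``$\ominus \bar\epsilon_\cR$'' summand and the unramifiedness of the pair of points at infinity of the hyperelliptic model.

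For the wild part, since $p$ is odd the quadratic characters $\bar\epsilon_\s$ are tame, and $\sp$ is tame by definition of $t_\ell$, so $\mathrm{Sw}(H^1_t \otimes \sp) = 0$ and $n_{\mathrm{wild}} = \mathrm{Sw}(H^1_{ab})$. The central identity I would invoke is Serre's conductor--discriminant formula: for any finite separable $L/K$, $a(\Ind_{G_L}^{G_K}\triv) = v(\Delta_{L/K})$ and $\dim(\Ind\triv)^{I_K} = f_{L/K}$, whence
$$\mathrm{Sw}(\Ind_{G_{K(r)}}^{G_K}\triv) = v(\Delta_{K(r)/K}) - [K(r):K] + f_{K(r)/K}.$$
Summing over $\cR/G_K$, the right-hand side of part (2) of Theorem \ref{allcond} is precisely $\mathrm{Sw}(\Ql[\cR])$. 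It then suffices to show $\mathrm{Sw}(H^1_{ab}) = \mathrm{Sw}(\Ql[\cR])$. On the $H^1_{ab}$ side, $V_\s|_{I_\s}$ is a tame twist (by $\gamma_\s$, which has order coprime to $p$) of $\Ql[\tilde\s] \ominus \triv$, and tame twists preserve the Swan conductor; on the $\Ql[\cR]$ side, partitioning $\cR$ by the smallest proper cluster containing each root decomposes $\Ql[\cR]$ into a direct sum of permutation modules $\Ql[\tilde\s]$, so the comparison follows after inducing to $G_K$ and using additivity of Swan.

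The main obstacle will be the bookkeeping in the wild part: one must carefully confirm that the multi-cluster breakdown of $\cR$ matches, \emph{orbit by orbit}, the contributions coming from the principal clusters indexing $H^1_{ab}$, handling non-principal clusters (twins, cotwins) and the $\ominus \bar\epsilon_\s$ corrections without producing spurious wild terms. A secondary subtlety in the tame calculation is reconciling the $\ominus \bar\epsilon_\cR$ contribution in $H^1_t$ with the local behaviour at infinity so that the exceptional $+1$ correction appears exactly in the stated parity regime.
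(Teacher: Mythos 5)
Your high-level strategy---decompose $H^1$ via Theorem \ref{GalRepthm}, split $n_C = \epsilon + \mathrm{Sw}$, and compute the two pieces separately---is a reasonable way to attack this, and since the paper itself gives no proof (it cites \cite[Theorem 1.20]{m2d2}) I can only evaluate the proposal on its merits. There is, however, a genuine gap in the wild computation.

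The step that fails is the assertion that $\mathrm{Sw}(H^1_t \otimes \sp) = 0$. You justify this by noting that the $\bar\epsilon_\s$ are tame quadratic characters and $\sp$ is tame, but $H^1_t$ is not a direct sum of characters of $G_K$: it is a sum of induced representations $\Ind_{G_\s}^{G_K}\bar\epsilon_\s$. Even when $\bar\epsilon_\s$ is a tame character of $G_\s$, the induction can be wildly ramified. Precisely, using the conductor--discriminant formula one gets $\mathrm{Sw}(\Ind_{G_\s}^{G_K}\bar\epsilon_\s) = \mathrm{Sw}(\Ind_{G_\s}^{G_K}\triv) = v(\Delta_{K_\s/K}) - [K_\s:K] + f_{K_\s/K}$, where $K_\s$ is the fixed field of $G_\s$; this vanishes if and only if $K_\s/K$ is tame, i.e.\ if and only if wild inertia stabilises $\s$. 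There is no reason for that to hold: over $\Q_3$, one can arrange three twins permuted cyclically by wild inertia (e.g.\ roots $a_j \pm 3^m$ with $a_j$ the roots of $t^3-3$), in which case $H^1_{ab}=0$ while $\mathrm{Sw}(H^1_t) = v(\Delta_{\Q_3(\sqrt[3]{3})/\Q_3}) - 3 + 1 = 3 > 0$, and indeed $\mathrm{Sw}(H^1_t\otimes\sp) = 2\mathrm{Sw}(H^1_t) = 6$ accounts for \emph{all} of $n_{\mathrm{wild}}$. So the correct identity to prove is $\mathrm{Sw}(H^1_{ab}) + 2\,\mathrm{Sw}(H^1_t) = \sum_{r\in\cR/G_K}\mathrm{Sw}(\Ind_{G_{K(r)}}^{G_K}\triv)$, and your reduction to $\mathrm{Sw}(H^1_{ab}) = \mathrm{Sw}(\Ql[\cR])$ discards exactly the part that, in such examples, carries the whole wild conductor.

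A smaller point: your reading of the set $V$ is off. The condition $\xi_\s(\tilde\lambda_\s)=0$ says the denominator of $[I_K:I_\s]\tilde\lambda_\s$ is odd, i.e.\ that $\gamma_\s$ has \emph{odd} order --- not that $\gamma_\s$ is trivial. Together with the acknowledged hand-waving in the tame bookkeeping, this means the identification of $U$ and $V$ with specific invariant-counting data still needs to be worked out; but the wild-part reduction is the step that would actually give a wrong answer as written.
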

\bigskip
\begin{remark}
\label{tamecond}
	If $p>2g+1$, then $C$ is tame so that $n_{\mathrm{wild}}=0$ and $n_C=n_{\mathrm{tame}}$. Moreover, in this case $n_C$ is completely determined by the underlying abstract cluster picture (in the sense of Section \ref{se:iso}) without needing to know the Galois action on the roots a priori.
\end{remark}

\newpage

\begin{example}
Let $C/\Q_p: y^2=(x^2-p^2)((x-1)^2-p^2)((x-2)^2-p^2)$ with cluster picture
$$
\clusterpicture
  \Root[A] {} {first} {r1};
  \Root[A] {} {r1} {r2};
  \Root[A] {3} {r2} {r3};
  \Root[A] {} {r3} {r4};
  \Root[A] {3} {r4} {r5};
  \Root[A] {} {r5} {r6};
  \ClusterLDName c1[][1][\s_1] = (r1)(r2);
  \ClusterLDName c2[][1][\s_2] = (r3)(r4);
  \ClusterLDName c3[][1][\s_3] = (r5)(r6);
  \ClusterLDName c4[][0][\cR] = (c1)(c2)(c3);
\endclusterpicture \qquad \text{and  } \cR=\{p,-p,1+p,1-p,2+p,2-p\}.
$$

One can check that the curve is semistable (Theorem \ref{semistability criterion subsection}) so we can apply Theorem \ref{sscond}. Observe that $A=\{\s_1, \s_2, \s_3 \}$ from which we obtain that $n_C=2$ since $\cR$ is \"{u}bereven. 
\end{example}

\begin{example}
Let $C/\Q_5: y^2=x^5+256$ and let $\zeta_5$ be a primitive 5th root of unity. Then the cluster picture is 
$$
\clusterpicture
  \Root[A] {} {first} {r1};
  \Root[A] {} {r1} {r2};
  \Root[A] {} {r2} {r3};
  \Root[A] {} {r3} {r4};
  \Root[A] {} {r4} {r5};
  \ClusterLDName c4[][\frac{1}{4}][\cR] = (r1)(r2)(r3)(r4)(r5);
\endclusterpicture \qquad \text{with  } \cR=\{\zeta_5^j \sqrt[5]{-256} \, \mid \, j=0,\cdots, 4\}.
$$

We begin with $n_{\mathrm{wild}}$ and observe that the roots form a single orbit under inertia. For all $r \in \cR$, we have that $\Q_5(r)/\Q_5$ has discriminant $50000$, degree $5$, residue degree $1$ hence $n_{\mathrm{wild}}=1$.

It remains to compute $n_{\mathrm{tame}}$. Now $\tilde\lambda_{\cR}=\frac{5}{8}$ so $\xi_{\cR}(\tilde\lambda_{\cR})=3$ and $\xi_{\cR}(d_{\cR})=2$ hence $U$ and $V$ are both empty. Therefore $n_{\mathrm{tame}}=2g=4$ and $n_C=4+1=5$.
\end{example}

\begin{example}
In this example we compute the conductor directly from the cluster picture without reference to a curve. Let $p \geq 7$ and let $C/K$ be a genus two hyperelliptic curve with $c=1$, with cluster picture
$$
\clusterpicture
  \Root[A] {} {first} {r1};
  \Root[A] {} {r1} {r2};
  \Root[A] {} {r2} {r3};
  \Root[A] {} {r3} {r4};
  \Root[A] {3} {r4} {r5};
  \ClusterLDName c4[][\frac{1}{4}][\s] = (r1)(r2)(r3)(r4);
  \ClusterLDName c5[][0][\cR] = (c4)(r5);
\endclusterpicture \qquad \text{where } \s=\{r_1,r_2,r_3,r_4\},\quad \cR=\s \cup \{r_5\}$$ and inertia acts by cyclically permuting the roots\footnote{This is actually the only possible action due to the depths. An example of such a curve is $C/\Q_7: y^2=x^5+5x^4+40x^3+80x^2+256x$.} in $\s$.

Now we compute that $\tilde\lambda_{\cR}=0$ and $\tilde\lambda_{\s}=\frac{1}{2}$ hence $\xi_{\cR}(\tilde\lambda_{\cR})=\xi_{\cR}(d_{\cR})=0$, $\xi_{\cR}(\tilde\lambda_{\s})=1$ and $\xi_{\cR}(d_{\s})=2$. This means that $U/I_{\Q_5}=\{\{r_1\},\{r_5\}\}$ (since $r_2,r_3,r_4$ are conjugate to $r_1$) and $V/I_{\Q_5}=\{\cR \}$. Thus $n_C=4-2+1=3$ by Remark~\ref{tamecond}.
\end{example}

\begin{example}
	Let $C/\Q_{97}: y^2=(x^3-97)(x-1)(x-2)(x-3)$ with cluster picture
$$
\clusterpicture
  \Root[A] {} {first} {r1};
  \Root[A] {} {r1} {r2};
  \Root[A] {} {r2} {r3};
  \Root[A] {3} {r3} {r4};
  \Root[A] {} {r4} {r5};
  \Root[A] {} {r5} {r6};
  \ClusterLDName c4[][\frac{1}{3}][\s] = (r1)(r2)(r3);
  \ClusterLDName c5[][0][\cR] = (c4)(r4)(r5)(r6);
\endclusterpicture	\quad \text{where } \s=\{\zeta_3^j\sqrt[3]{97} \, \mid \, j=0,1,2 \}, \text{ and } \zeta_3^3=1\neq \zeta_3.$$

Again by Remark \ref{tamecond}, $n_C=n_{\mathrm{tame}}$. Now $\tilde\lambda_{\cR}=0$ and $\tilde\lambda_{\s}=\frac{1}{2}$ from which we see that $U/I_{\Q_{97}}=\{\s,\{1\},\{2\},\{3\} \}$ and $V/I_{\Q_{97}}=\{\cR\}$. Therefore $n_C=4-4+1+1=2$ since $|\cR|$ and $v(c)$ are both even.
\end{example}

\begin{remark}
	As the first and last examples show, the conductor does not determine whether a curve is semistable, in contrast to the elliptic curve setting.
\end{remark}

\newpage
\section{Root number (tame case)}\label{se:rootnumbers}

We give a description of the local root number $W(A/K)$ of an abelian variety $A$ (eg. $A = \Jac C$), first in the case of semistable reduction, then in the case of tame reduction. For $\Jac C$, we give this description in terms of the cluster picture.

\begin{notation}
	Throughout, $\chi_n$ will denote a fixed character of $I_K$ of order $n$, and for an abelian variety $A/K$ we shall decompose $\H(A/\bar{K},\Ql) = \rho_{ab} \oplus \left( \rho_t \otimes \mathrm{sp}(2) \right)$ as in Section \ref{se:GalRep}. 
	For a cluster $\mathfrak{s}$, let $G_{\mathfrak{s}}=\Stab_{G_K}(\s)$ and $I_{\mathfrak{s}} = \Stab_{I_K}(\s)$ be its Galois and inertia stabilisers respectively, and let $n_{\mathfrak{s}} =  [I_K:I_{\mathfrak{s}}]$. We write $X$ for the set of all cotwins and all even, non-\"ubereven clusters of $C$.
\end{notation}

\begin{theorem} \label{thm:rootnumberss}
	Let $A/K$ be an abelian variety with semistable reduction. Then $$W\left(A/K\right) = \left(-1\right)^{\langle \triv, \rho_t \rangle}.$$
\end{theorem}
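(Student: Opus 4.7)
The plan is to exploit the semistability decomposition $\H(A/\bar{K},\Q_\ell) = \rho_{ab} \oplus (\rho_t \otimes \mathrm{sp}(2))$ in which both $\rho_{ab}$ and $\rho_t$ are unramified (by semistability), together with standard formulae for local root numbers of Weil--Deligne representations. Since root numbers are multiplicative on direct sums, it suffices to compute $W(\rho_{ab})$ and $W(\rho_t \otimes \mathrm{sp}(2))$ separately and then multiply.

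First, I would show that $W(\rho_{ab}) = +1$. The representation $\rho_{ab}$ arises from the $H^1$ of the abelian quotient of the special fibre of the N\'eron model, so it is unramified of even dimension, and symplectic self-dual via the Weil pairing. For an unramified self-dual representation $V$ we have $\varepsilon(V) = \det(-\phi|V)$ (up to a $q$-power that does not affect $W$), and symplecticity gives $\det \rho_{ab} = \triv$, so $\det(-\phi|\rho_{ab}) = (-1)^{\dim \rho_{ab}} = +1$. Hence $W(\rho_{ab})=+1$.

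Next, I would compute $W(\rho_t \otimes \mathrm{sp}(2))$. Observe that $\rho_t$ is canonically the character group of the toric part of the special fibre (tensored with $\Q_\ell$), and carries a \emph{symmetric} monodromy pairing, so it is unramified self-dual \emph{orthogonal}. The key computation is the standard identity for Steinberg-twisted epsilon factors,
\[
\varepsilon(V \otimes \mathrm{sp}(2), s, \psi) = \varepsilon(V, s, \psi)\cdot \varepsilon(V, s-1, \psi),
\]
applied at $s=1/2$; combined with self-duality of $V = \rho_t$ and the $L$-factor contribution, this yields
\[
W(\rho_t \otimes \mathrm{sp}(2)) \;=\; \det\bigl(-\phi \,\big|\, \rho_t^{I_K}\bigr) \;=\; \det(-\phi \,|\, \rho_t),
\]
since $\rho_t$ is unramified.

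Finally, I would identify this determinant with $(-1)^{\langle\triv,\rho_t\rangle}$. Being an unramified orthogonal representation of $\Frob^{\Z}$, the Frobenius-semisimplification of $\rho_t$ decomposes into summands of three types: (i) $\triv$, with $\det(-\phi)=-1$; (ii) the unramified quadratic character $\eta$, with $\det(-\phi)=+1$; and (iii) pairs $\chi\oplus\chi^{-1}$ with $\chi$ unramified of order $>2$, with $\det(-\phi)=+1$. Only the trivial summands contribute a $-1$, and the contributions multiply, giving $\det(-\phi|\rho_t) = (-1)^{\langle\triv,\rho_t\rangle}$.

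The hard part will be verifying the epsilon-factor identity $\varepsilon(V\otimes\mathrm{sp}(2),s,\psi) = \varepsilon(V,s,\psi)\varepsilon(V,s-1,\psi)$ with the right normalisations so that, upon specialising to $s=1/2$ and dividing by absolute value, the $q$-powers cancel cleanly and only the sign $\det(-\phi|\rho_t)$ survives. This is a matter of carefully tracking Deligne--Langlands conventions for $\varepsilon$ versus $\varepsilon_L$ and for the Tate twist; once this normalisation is pinned down, the orthogonality of $\rho_t$ (so that $W(\rho_t)^2=1$) cleanly absorbs the remaining ambiguity.
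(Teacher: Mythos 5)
The paper does not prove this theorem internally; it cites \cite[Theorem 1.5]{BisattRN}. Your approach --- decomposing $\H = \rho_{ab}\oplus(\rho_t\otimes\mathrm{sp}(2))$, using multiplicativity of local root numbers, and analysing each piece via its $\varepsilon$-factor --- is the strategy used in that reference, and your final answer is correct.

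There is, however, a conceptual slip worth flagging. Your assertion that $\varepsilon(V)=\det(-\phi\mid V)$ (up to $q$-powers) for an unramified Weil representation $V$ with trivial monodromy operator is not right: for an additive character $\psi$ of conductor zero, such a $V$ has $\varepsilon(V,\psi)=1$, so $W(\rho_{ab})=1$ holds simply because $\rho_{ab}$ is unramified with $N=0$; neither symplecticity nor self-duality is needed. The factor $\det(-\phi\mid\cdot)$ you are after arises only as Deligne's monodromy correction for Weil--Deligne representations $(\rho,N)$:
\[
  \varepsilon_L(\rho,N,\psi)\;=\;\varepsilon(\rho,\psi)\cdot\det\!\bigl(-\phi\mid V^{I}/V_N^{I}\bigr),\qquad V_N=\ker N,
\]
(Tate, Corvallis, (4.1.6)). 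For $\rho_t\otimes\mathrm{sp}(2)$ in the semistable case, $V^I=V$, the underlying Weil representation is unramified so its $\varepsilon$ is a harmless unit times a $q$-power, and $V/V_N\cong\rho_t$, so the correction is exactly $\det(-\phi\mid\rho_t)$. Your closing step --- orthogonality of $\rho_t$ forces the non-real-unit Frobenius eigenvalues to pair off as $\lambda,\lambda^{-1}$, leaving $\det(-\phi\mid\rho_t)=(-1)^{m}$ where $m$ is the multiplicity of the eigenvalue $1$, i.e.\ $(-1)^{\langle\triv,\rho_t\rangle}$ --- is correct and closes the argument. The ``hard part'' you identify (reconciling $\varepsilon$ and $\varepsilon_L$ normalisations) dissolves once you invoke the monodromy correction formula directly rather than the direct-sum identity $\varepsilon(V\otimes\mathrm{sp}(2),s)=\varepsilon(V,s)\varepsilon(V,s-1)$, which only accounts for the underlying Weil representation and omits the $N$-contribution that carries the entire content of the theorem.
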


When $C/K$ is semistable, $W(\Jac C/K)$ may then be computed from the cluster picture as follows.
\begin{prop}
	\label{prop:onerhot}
	Let $C/K$ be a (not necessarily semistable) hyperelliptic curve. The toric part $\rho_t$ of the representation of $\Jac C$ satisfies:
	\begin{align*}
		\langle \triv, \rho_t \rangle & = \#\{\mathfrak{s} \in (X \setminus \mathcal{R}) / G_K\colon \Res_{G_\s}\epsilon_{\s} = \triv\} - \left\{ \begin{array}{ll} 1 & \mathcal{R} \textrm{ \"ubereven and $c \in K^{\times 2}$},\\ 0  & \textrm{else.} \end{array}\right. 
	\end{align*}
\end{prop}

\begin{theorem} \label{thm:rootnumber}
	Let $A/K$ be an abelian variety with tame reduction. Let 
	\begin{align*}
		m_t = \langle \rho_t \big|_{I_K}, \chi_2 \rangle, \qquad \qquad
		m_e = \langle \rho_{ab} \big|_{I_K}, \chi_e \rangle \textrm{ for } e \geq 2.
	\end{align*}
	Then 
	\begin{align*}
		W\left(A/K\right) = \smash{\Big(\prod_{e\geq 3}W_{q,e}^{m_e}\Big)\left(-1\right)^{\langle \triv, \rho_t \rangle} W_{q,2}^{m_t + \frac{1}{2} m_2},}
	\end{align*}
	where $q=|k|$ and 
	\begin{align*}
		W_{q,e} = \left\{ \begin{array}{cl}
			\left(\frac{q}{l}\right) & \textrm{if $e=l^k$ for some odd prime $l$ and integer $k \geq 1$;}\\
			\left(\frac{-1}{q}\right) & \textrm{if $e=2l^k$ for some prime $l\equiv3 \,\mathrm{ mod }\, 4$, $k\geq 1$, or if $e=2$;} \\
			\left(\frac{-2}{q}\right) & \textrm{if $e=4$;} \\
			\left(\frac{2}{q}\right) & \textrm{if $e=2^k$ for some integer $k\geq3$;} \\
			1 & \textrm{else}.
		\end{array} \right.
	\end{align*}
\end{theorem}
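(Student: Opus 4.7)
The plan is to leverage additivity of local root numbers along the canonical decomposition $H^1(A/\bar K, \mathbb Q_\ell) = \rho_{ab} \oplus (\rho_t \otimes \mathrm{sp}(2))$, which gives
$$W(A/K) = W(\rho_{ab}) \cdot W(\rho_t \otimes \mathrm{sp}(2)).$$
The tame-reduction hypothesis ensures both $\rho_{ab}|_{I_K}$ and $\rho_t|_{I_K}$ factor through the tame inertia quotient, and so decompose into sums of finite-order characters $\chi_e$. The Weil pairing makes each of $\rho_{ab}$ and $\rho_t$ essentially self-dual, forcing $\chi_e$ and $\chi_e^{-1}$ to appear with equal multiplicity, and constraining the $\chi_2$-isotypic of $\rho_{ab}$ to carry a symplectic (hence even-dimensional) structure.

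For the Steinberg factor $W(\rho_t \otimes \mathrm{sp}(2))$, I would apply the known identity relating the root number of a Steinberg-twist to that of the underlying representation. Separating $\rho_t$ into its unramified part and its genuinely ramified tame part (which by self-duality must be quadratic, i.e.\ a sum of copies of $\chi_2$), the identity applied piece-by-piece yields $(-1)^{\langle \triv, \rho_t\rangle}$ from the former and $W_{q,2}^{m_t}$ from the latter, after a direct Gauss-sum computation on the residue field of size $q$.

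For $W(\rho_{ab})$, I would decompose $\rho_{ab}|_{I_K}$ into $\chi_e$-isotypic pieces and apply additivity once more. For each pair $(\chi_e, \chi_e^{-1})$ with $e \geq 3$, a classical tame abelian Gauss-sum computation evaluates $W(\chi_e) W(\chi_e^{-1})$ to precisely the tabulated value $W_{q,e}$; since by self-duality the two characters appear with equal multiplicity, the combined contribution over all $e \geq 3$ is $\prod_{e \geq 3} W_{q,e}^{m_e}$. For $e = 2$, each copy of the self-dual $\chi_2$ contributes a single factor $W_{q,2}^{1/2}$, giving $W_{q,2}^{m_2/2}$, with evenness of $m_2$ guaranteed by symplecticity of the pairing.

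The hardest step will be the explicit Gauss-sum evaluations together with the delicate bookkeeping for self-dual constituents. Concretely I expect to need: (i) a careful check that the Weil pairing restricts to a symplectic form on the $\chi_2$-isotypic of $\rho_{ab}$, forcing $m_2 \in 2\mathbb Z$; (ii) the Steinberg identity applied to the full Weil--Deligne structure of $\rho_t$, including the interaction of its Frobenius and inertia actions with the short exact sequence $0 \to \mathbb Q_\ell(1) \to \mathrm{sp}(2) \to \triv \to 0$; and (iii) the Fr\"ohlich--Queyrut evaluation of $W(\chi_e) W(\chi_e^{-1})$, which via quadratic reciprocity and supplementary laws reduces to the case analysis on $e$ and $q$ mod small moduli recorded in the piecewise definition of $W_{q,e}$.
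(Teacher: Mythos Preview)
The paper does not prove this theorem. As stated in the introduction, ``None of the theorems are original (apart from Theorem~\ref{thm:discriminant_by}, whose proof is given in the Appendix) and we give no proofs''; the references section for~\S\ref{se:rootnumbers} simply points to \cite[Theorem~1.5]{BisattRN}. So there is no proof in the paper to compare your proposal against.

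That said, your outline is a plausible sketch of the argument one would expect to find in \cite{BisattRN}: reduce via additivity and the decomposition $\rho_{ab}\oplus(\rho_t\otimes\mathrm{sp}(2))$, use self-duality from the Weil pairing to pair up characters, invoke the Steinberg root-number identity for the toric piece, and evaluate the remaining tame abelian $\varepsilon$-factors as Gauss sums. One caution: your treatment of $\rho_t$ and $\rho_{ab}$ is phrased entirely at the level of inertia, but root numbers are defined via the full Weil--Deligne representation, so the Frobenius structure on each $\chi_e$-isotypic piece must be tracked as well (e.g.\ an unramified-twist ambiguity in each tame character). In practice this is handled by Rohrlich-type formulae expressing the tame $\varepsilon$-factor in terms of the inertia order and the Frobenius eigenvalue, and the self-duality constraint pins down the latter sufficiently; but as written your sketch elides this step.
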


In the case of Jacobians of hyperelliptic curves with tame reduction, $\langle \triv, \rho_t \rangle$ can be calculated as in Proposition \ref{prop:onerhot} and $m_t$ can be read off the cluster picture.

\begin{prop} \label{thm:toricrn}
	Let $C/K$ be a hyperelliptic curve with tame reduction and $\deg(f)$ even. Then
	\begin{align*}
		m_t & \equiv  
			v(c) + \# \{\mathfrak{s} \in X / I_K \,:\, n_{\mathfrak{s}}\left( \nu_{\mathfrak{s}} - |\mathfrak{s}|d_{\mathfrak{s}}\right)\,\, \mathrm{ even }\,\} + \sum_{\mathfrak{s} \in X / I_K} n_{\mathfrak{s}} \quad\mathrm{mod}\, 2.
	\end{align*}
\end{prop}

The final quantities which are required are the $m_e$, the multiplicities of the eigenvalues of the abelian part of the representation. These are straightforward to calculate by hand in terms of the Galois representation from Section \ref{se:GalRep}. An explicit but rather messy description in terms of the cluster pictures exists; see \cite[Theorem~4.5, Section~4.1.2]{BisattCl}.

\begin{remark}
	The root number of curves $C$ with potentially totally toric reduction (possibly wild) can be calculated via \cite[Lemma 3.8, 4.1]{BisattRN}.
\end{remark}

\newpage

\begin{example}
	First we give three cases where the curve is semistable (this can be checked using Theorem \ref{semistability criterion subsection}), and the calculation simplifies by Theorem \ref{thm:rootnumberss}. \vskip 4pt
	
	\hskip -12pt \textbf{(i)} Let $C_1/\QQ_{23}$ be given by\footnote{we picked $p=23$ as it is the smallest prime $p$ such that $\left(\frac{2}{p} \right) = \left(\frac{3}{p} \right) = 1$ and $\left(\frac{-1}{p} \right) = -1$.} $$\clusterpicture            
	\Root[A] {} {first} {r1};
	\Root[A] {} {r1} {r2};
	\ClusterLDName c1[][2][\mathfrak{s}_1] = (r1)(r2);
	\Root[A] {} {c1} {r3};
	\Root[A] {} {r3} {r4};
	\ClusterLDName c2[][2][\mathfrak{s}_2] = (r3)(r4);
	\Root[A] {} {c2} {r5};
	\Root[A] {} {r5} {r6};
	\ClusterLD c2[][0] = (c1)(r5)(r6);
	\endclusterpicture \qquad y^2 = (x^2 - 23^4)((x-1)^2 - 23^4)(x-2)(x-3),$$ with $\mathcal{R} = \{23^2, -23^2, 1+23^2, 
	1-23^2, 2, 3\}$. Note that $\mathfrak{s}_1$ and $\mathfrak{s}_2$ lie in their own orbits, and $\epsilon_{\s_1} = \epsilon_{\s_2} = \triv$. Therefore by Proposition \ref{prop:onerhot}, $\langle \triv, \rho_t \rangle = 2$ and $$W(\Jac C_1/\QQ_{23}) = (-1)^{\langle \triv, \rho_t \rangle} = (-1)^2 = 1.$$
	
	\hskip-12pt \textbf{(ii)} Now let $C_2/\QQ_{23}$ be given by $$y^2 = ((x-i)^2 - 23^4)((x+i)^2 - 23^4)(x-2)(x-3),$$ with $\mathcal{R} = \{i + 23^2,i -23^2, -i + 23^2, -i - 23^2, 2, 3\}$. The cluster picture of $C_2$ is the same as $C_1$, but now Frobenius swaps $\s_1$ and $\s_2$. One checks that $\Res_{G_{\s}}\epsilon_{\mathfrak{s}_1} = \triv$. Therefore $\langle \triv, \rho_t \rangle = 1$ and hence $$W(\Jac C_2)/\QQ_{23}) = (-1)^{\langle \triv, \rho_t \rangle} = (-1)^1 = -1.$$ 
	
	\hskip-12pt \textbf{(iii)} Now let $C_3/\QQ_{23}$ be given by $$y^2 = -(x^2 - 23^4)((x-1)^2 - 23^4)(x-2)(x-3),$$ so the roots and cluster picture are the same as (i), but now the leading coefficient is $-1$. Now $\mathfrak{s}_1$ and $\mathfrak{s}_2$ are in their own orbits again but $\epsilon_{\s_1}$ and $\epsilon_{\s_2}$ have order $2$. Therefore $\langle \triv, \rho_t \rangle = 0$ and hence $$W(\Jac C_3 /\QQ_{23}) = (-1)^{\langle \triv, \rho_t \rangle} = (-1)^0 = 1.$$ 
\end{example}

\begin{example}
	Let $C/\QQ_7$ be given by $$\clusterpicture            
	\Root[A] {} {first} {r1};
	\Root[A] {} {r1} {r2};
	\ClusterLDName c1[][\frac 52][\mathfrak{s}_1] = (r1)(r2);
	\Root[A] {} {c1} {r3};
	\Root[A] {} {r3} {r4};
	\ClusterLDName c2[][\frac 52][\mathfrak{s}_2] = (r3)(r4);
	\Root[A] {} {c2} {r5};
	\Root[A] {} {r5} {r6};
	\ClusterLDName c3[][\frac 52][\mathfrak{s}_3] = (r5)(r6);
	\ClusterLD c3[][0] = (c1)(c2)(c3);
	\endclusterpicture \qquad y^2 = 7\left(x^2 - 7^5\right)\left((x-1)^2 - 7^5\right)\left((x-2)^2 - 7^5\right). $$
	Since there are no principal, non-\"ubereven clusters, the abelian part of the representation is trivial. We calculate $\s_1^* = \mathcal{R}$, $\theta_{\mathfrak{s}_1^*}^2 = 7$ and hence $\epsilon_{\mathfrak{s}_1}$ is a character of order $2$. Similarly, $\epsilon_{\mathfrak{s}_2}$ and $\epsilon_{\mathfrak{s}_3}$ are characters of order $2$. Therefore, since $c \not\in \QQ_7^{\times 2}$, $\langle \triv, \rho_t \rangle = 0$ by Proposition \ref{prop:onerhot}. Furthermore, $\mu = 1$ and for $i=1,2,3$, $n_{\mathfrak{s}_i} = 1$ and $n_{\mathfrak{s}_i}\left(\nu_{\mathfrak{s}_i} - |\mathfrak{s}_i|d_{\mathfrak{s}_i}\right) = 1$. By Proposition \ref{thm:toricrn} $m_t \equiv 4 \mod 2$, so by Theorem \ref{thm:rootnumber} $$W(\Jac C/\QQ_7) = W_{7,2}^4 = 1. $$
\end{example}

\begin{example}
	Let $C/\QQ_7$ be given by $$\clusterpicture 
	\Root[A] {} {first} {r1};
	\Root[A] {} {r1} {r2};
	\Root[A] {} {r2} {r3};
	\ClusterLDName c1[][\frac 83][] = (r1)(r2)(r3);
	\Root[A] {} {c1} {r4};
	\Root[A] {} {r4} {r5};
	\Root[A] {} {r5} {r6};
	\ClusterLDName c2[][\frac 72][] = (r4)(r5)(r6);
	\ClusterLD c3[][0] = (c1)(c2);
	\endclusterpicture \qquad y^2 = (x^3 - 7^8)(x-1)((x-1)^2-7^7).$$
	Since the only even cluster is $\cR$, the toric part of the representation is trivial (Theorem \ref{th:ptr}) and hence only the abelian part contributes to the root number. On inertia, $ \rho_{ab} = \chi_3 \oplus \chi_3^{-1} \oplus \chi_4 \oplus \chi_4^{-1}$ and so $m_3 = 1$, $m_4=1$ and $m_e = 0$ for all other $e \in \mathbb{N}$. We calculate $W_{7,3} = \left(\frac{7}{3}\right) = 1$, $W_{7,4} = \left(\frac{-2}{7}\right) = -1$ and $$W(\Jac C/\QQ_7) = W_{7,3}W_{7,4} = -1.$$
\end{example}

\newpage
\section{Differentials (semistable case)}
\label{se:diff}
Let $\Omega_{C/K}^1(C)$ be the $g$-dimensional $K$-vector space of regular differentials of $C$. It is spanned by $\omega_0,\dots,\omega_{g-1}$, where $\omega_i=x^i\frac{dx}{y}$.

Fix a regular model $\mathcal{C}/\cO_K$ of $C$ (see Section \ref{se:MinRegMod}), and consider the global sections of the relative dualising sheaf $\omega_{\mathcal{C}/\cO_K}$.

\begin{rmrk}
\label{rmrk:DualisingSheaf}
		(i) $\omega_{\mathcal{C}/\cO_K}(\mathcal{C})$ can be thought of as the space of those differentials that are regular 
			not only along $C$ (the generic fibre of $\mathcal{C}$) but also along every irreducible component 
			of the special fibre of $\mathcal{C}$;
			
		(ii) $\omega_{\mathcal{C}/\cO_K}(\mathcal{C})$ can be viewed as an $\cO_K$-lattice in $\Omega_{C/K}^1(C)$;
		
		(iii) $\omega_{\mathcal{C}/\cO_K}(\mathcal{C})$ is independent of the choice of the regular model $\mathcal{C}$.
\end{rmrk}

\begin{defn}
\label{defn:IntegralDifferentials}
A \textit{basis of integral differentials of} $C$, denoted $\omega_0^\circ,\dots,\omega_{g-1}^\circ$, is an $\cO_K$-basis of $\omega_{\mathcal{C}/\cO_K}(\mathcal{C})$ as an $\cO_K$-lattice in $\Omega_{C/K}^1(C)$.
\end{defn}

\begin{thm}
\label{thm:IntegralDifferentials}
	Suppose $C/K$ is semistable. 
	For $i=0,\dots,g-1$ inductively
	\begin{itemize}
	\item compute 
	$e_{\t,i}=\frac{\nu_{\t}}{2}-d_\t-\sum_{j=0}^{i-1}d_{\s_j\wedge\t}$ 
	for every proper cluster $\t$;
	\item choose a proper cluster $\s_i$ so that $e_{\s_i,i}=\max_{\t}\{e_{\t,i}\}$.\footnote{Suppose the maximal value is obtained by two different clusters $\s$ and $\s'$. If
	$\s'\subseteq\s$, choose $\s_i=\s$, if $\s\subseteq\s'$, choose $\s_i=\s'$, otherwise choose freely any of the two.} Denote $e_{\s_i,i}$ by $e_i$.
	\end{itemize}
	Fix a centre $z_\s\in K^{nr}$ for every proper cluster $\s$,\footnote{This is always possible by Theorem \ref{semistability criterion subsection} and \cite[Lemma B.1]{m2d2} since $C$ is semistable.} then choose a finite unramified extension $F/K$ such that $z_\s \in F$ for all $\s$.
	Let $\beta\in \cO_F^\times$ be any element such that $\mathrm{Tr}_{F/K}(\beta)\in \cO_K^\times$.
	Then the differentials
	\[\omega^\circ_i=\pi^{e_i}\cdot\mathrm{Tr}_{F/K}\bigg(\beta\prod_{j=0}^{i-1}(x\!-\!z_{\s_j})\bigg)\frac{dx}{y},\qquad i=0,\dots,g-1,\]
	form a basis of integral differentials of $C$.
\end{thm}

\begin{rmrk}
\label{rmrk:DifferentialsAlgClosed}
If $F=K$ then in Theorem \ref{thm:IntegralDifferentials} we can choose $\beta=1$ and the trace is just the identity. One can take $F=K$ if and only if $\Frob$ does not permute clusters.
\end{rmrk}

Consider $\omega=\omega_0\wedge\dots\wedge\omega_{g-1},\omega^\circ=\omega_0^\circ\wedge\dots\wedge\omega_{g-1}^\circ\in\det\Omega_{C/K}^1(C)=\bigwedge^g\Omega_{C/K}^1(C)$. As $\det\Omega_{C/K}^1(C)$ is a $1$-dimensional $K$-vector space, there exists $\lambda\in K$ such that $\omega^\circ=\lambda\cdot\omega$. We will denote this element by $\frac{\omega^\circ}{\omega}$. 

\begin{rmrk}
\label{rmrk:FudgeFactor}
Note that $\frac{\omega^\circ}{\omega}$ is only well-defined up to a unit. Moreover, it depends on the choice of Weierstrass equation for $C$.
\end{rmrk}

\begin{thm}
\label{thm:FudgeFactor}
Suppose $C/K$ is semistable. With the notation above,
\[
8\cdot v\big(\tfrac{\omega^\circ}{\omega}\big)= 4g\cdot v(c)+\sum_{\s\text{ even}}\delta_\s(|\s|\!-\!2)|\s|+\sum_{\s\text{ odd}}\delta_\s(|\s|\!-\!1)^2,\qquad\text{where $\delta_\cR=d_\cR$.}
\]
\end{thm}

\begin{prop}
\label{prop:HyperellipticDiscriminant}
Let $\Delta_C$ be the discriminant of $C$ (see Section \ref{sec:minimaldiscriminant}). Then \[g\cdot v(\Delta_C)-(8g+4)\cdot v\big(\tfrac{\omega^\circ}{\omega}\big)\] is independent of the choice of equation for $C$. If $C/K$ is semistable, it is given by
\[\sum_{\substack{\s\text{ even}\\1<|\s|<2g+1}}\frac{\delta_\s}{2}|\s|(2g\!+\!2\!-\!|\s|)+\sum_{\substack{\s\text{ odd}\\1<|\s|<2g+1}}\frac{\delta_\s}{2}(|\s|\!-\!1)(2g\!+\!1\!-\!|\s|).\]

\end{prop}

\begin{ex}
\label{eg:diff1}
Consider the semistable genus $3$ curve  \[\vspace{-4pt}C:y^2=((x-7^2)^2+1)((x-2\cdot 7^2)^2+1)((x-3\cdot 7^2)^2+1)(x^2-1)\text{ over }\Q_7.\] Its cluster picture is
\clusterpicture            
  \Root[A] {2} {first} {r1};
  \Root[A] {} {r1} {r2};
  \Root[A] {} {r2} {r3};
  \ClusterLDName c1[][2][\mathfrak{t}_1] = (r1)(r2)(r3);
  \Root[A] {1} {c1} {r4};
  \Root[A] {} {r4} {r5};
  \Root[A] {} {r5} {r6};
  \ClusterLDName c2[][2][\mathfrak{t}_2] = (r4)(r5)(r6);
  \Root[A] {1} {c2} {r7};
  \Root[A] {} {r7} {r8};
  \ClusterLD c3[][0] = (c1)(c2)(r7)(r8);
\endclusterpicture
with $\t_1=\{i\!+\!7^2,~i\!+\!2\!\cdot\! 7^2,~i\!+\!3\!\cdot\!7^2\}$, $\t_2=\{-i\!+\!7^2,~-i\!+\!2\!\cdot\!7^2,~-i\!+\!3\!\cdot\! 7^2\}$ and $\cR=\t_1\cup\t_2\cup\{\pm 1\}$, where $i^2=-1$. We want to find a basis of integral differentials of $C$ using Theorem \ref{thm:IntegralDifferentials}. First compute $e_{\t,0}$ for $\t_1,\t_2,\cR$ and note that $e_{\t_1,0}=e_{\t_2,0}=\max_\t \{e_{\t,0}\}$ (see table below). Since neither $\t_1\subset\t_2$ nor $\t_2\subset\t_1$, we are free to choose any of the two as $\s_0$. Set $\s_0=\t_2$. We repeat this procedure for $e_{\t,1}$ and $e_{\t,2}$ as shown in the following table.

\begin{center}
    \small\begin{tabular}{c|c|c|c!{\vline width 1pt}c|c|c}
     & $z_\t$ &  $d_\t$& $\nu_\t/2$ & $e_{\t,0}$ ($=\nu_\t/2-d_\t$)&$e_{\t,1}$ ($=e_{\t,0}-d_{\s_0\wedge\t}$)&$e_{\t,2}$ ($=e_{\t,1}-d_{\s_1\wedge\t}$)\\
     \Xhline{1pt}
     $\t_1$& $i$ & $2$ & $3$ & $1$ & $\color{red}1$ & $-1$ \\
     \hline
     $\t_2$& $-i$ & $2$ & $3$ & $\color{red}1$ & $-1$ & $-1$ \\
     \hline
    $\cR$& $0$ & $0$ & $0$ & $0$ & $0$ & $\color{red}0$ \\ 
\end{tabular}
\end{center}
The numbers coloured in red are the $e_i$'s. Choosing $F=\Q_7(i),\beta=1$, we have 
\begin{center}
$\begin{array}{lr}
\begin{array}{l}
\omega_0^\circ=7^1\mathrm{Tr}_{\Q_7(i)/\Q_7}(1)\tfrac{dx}{y}=14\tfrac{dx}{y},\\
\omega_1^\circ=7^1\mathrm{Tr}_{\Q_7(i)/\Q_7}(x+i)\tfrac{dx}{y}=14x\tfrac{dx}{y},\\
\omega_2^\circ=7^0\mathrm{Tr}_{\Q_7(i)/\Q_7}((x+i)(x-i))\tfrac{dx}{y}=2(x^2+1)\tfrac{dx}{y},
\end{array}& 
	\text{\scriptsize{$\left(\begin{matrix}
    \vspace{1pt}\omega_0^\circ\\
    \vspace{1pt}\omega_1^\circ\\
    \omega_2^\circ
    \end{matrix}\right)=
    \left(\begin{matrix}
    14 \!\!\!\!& 0 \!\!\!\!& 0\cr
    0 \!\!\!\!&  14 \!\!\!\!& 0\cr
    2 \!\!\!\!&  0 \!\!\!\!& 2
    \end{matrix}\right)
    \left(\begin{matrix}
    \omega_0\cr
    \omega_1\cr
    \omega_2
    \end{matrix}\right),$}}
    \end{array}$
\end{center}
form a basis of integral differentials. In particular, $\omega^\circ=8\cdot 7^2\omega$ and so $v\big(\frac{\omega^\circ}{\omega}\big)=2$. Finally, we check this result agrees with what the formula in Theorem \ref{thm:FudgeFactor} predicts:
\begin{align*}
v\big(\tfrac{\omega^\circ}{\omega}\big)&=\tfrac{1}{8}\big(4g\cdot v(c)+d_\cR(|\cR|-2)|\cR|+\delta_{\t_1}(|\t_1|-1)^2+\delta_{\t_2}(|\t_2|-1)^2\big)\\
&=\tfrac{1}{8}\big(12\cdot 0+0(8-2)8+2(3-1)^2+2(3-1)^2\big)=2.
\end{align*}
\end{ex}

\begin{ex}
\label{eg:diff2}
Let $f(x)=7^2(x^6-1)\in\Q_7[x]$ and $C^n:y^2=7^{6n}f(x/7^n)$, $n\in\Z$, a family of isomorphic semistable hyperelliptic curves of genus $2$. The cluster picture of $C^n$ is 
\clusterpicture            
  \Root[A] {2} {first} {r1};
  \Root[A] {} {r1} {r2};
  \Root[A] {} {r2} {r3};
  \Root[A] {} {r3} {r4};
  \Root[A] {} {r4} {r5};
  \Root[A] {} {r5} {r6};
  \ClusterLD c3[][n] = (r1)(r2)(r3)(r4)(r5)(r6);
\endclusterpicture
with $\cR=\{7^n,\zeta_3 7^n, \zeta_3^2 7^n, -7^n,-\zeta_3 7^n, -\zeta_3^2 7^n\}$. Since we have only one cluster, $\s_0=\s_1=\cR$. Then $e_0=1+2n$ and $e_1=1+n$. As $0$ is a centre of $\cR$, we are in the situation of Remark \ref{rmrk:DifferentialsAlgClosed}, and so $\omega_0^\circ=7^{1+2n}\frac{dx}{y}$, $\omega_1^\circ=7^{1+n}x\frac{dx}{y}$. This shows that $v(\omega^\circ/\omega)=2+3n$ does depend on $n$, i.e.\,on the choice of equation.

On the other hand, from the formula in Proposition \ref{prop:HyperellipticDiscriminant} we immediately see that $g\cdot v(\Delta_C)-(8g+4)\cdot v(\omega^\circ/\omega)=0$, which is independent of $n$.
\end{ex}

\begin{ex}
\label{eg:diff3}
Let $C\!:\!y^2\!=\!f(x)$ be a semistable curve with $f(x)$ monic. Suppose
\vspace{-12pt}\[\begin{array}{lr}\clusterpicture            
  \Root[A] {2} {first} {r1};
  \Root[A] {} {r1} {r2};
  \ClusterLDName c1[][][\mathfrak{t}_1] = (r1)(r2);
  \Root[A] {1} {c1} {r3};
  \Root[Dot] {} {r3} {r4};
  \Root[Dot] {} {r4} {r5};
  \Root[Dot] {} {r5} {r6};
  \Root[A] {} {r6} {r7};
  \ClusterLDName c2[][][\mathfrak{t}_2] = (c1)(r3)(r4)(r5)(r6)(r7);
  \Root[A] {1} {c2} {r8};
  \Root[Dot] {} {r8} {r9};
  \Root[Dot] {} {r9} {r10};
  \Root[Dot] {} {r10} {r11};
  \Root[A] {} {r11} {r12};
  \ClusterLDName c3[][][\cR] = (c2)(r8)(r9)(r10)(r11)(r12);
\endclusterpicture \quad\begin{array}{l}\text{is its cluster picture, with }d_{\t_1}=u/2, d_{\t_2}=a, d_{\cR}=b,\\\text{for some }u,a,b\in\Z, u/2>a>b.\end{array}
\end{array}\]
As in Example \ref{eg:diff1}, to compute $e_i$ for $i=0,\dots g-1$, we draw the following table

\begin{center}
    \small\begin{tabular}{c|c!{\vline width 1pt}c|ccc|cc}
     &  $d_\t$& $e_{\t,0}$&$e_{\t,1}$ & $\dots$ & $e_{\t,m-1}$ & $e_{\t,m}$ & $\dots$ \\
     \Xhline{1pt}
     $\t_1$& $u/2$ & $\frac{|\t_2|-|\t_1|}{2}a+\frac{|\cR|-|\t_2|}{2}b$ & $e_{\t_1,0}-a$ & $\dots$ & $e_{\t_1,0}-(m-1)a$ & $e_{\t_1,0}-ma$ & $\dots$\\
     \hline
     $\t_2$&  $a$ & $\color{red}\frac{|\t_2|-|\t_1|}{2}a+\frac{|\cR|-|\t_2|}{2}b$ & $\color{red}e_{\t_2,0}-a$ & $\color{red}\dots$ & $\color{red}e_{\t_2,0}-(m-1)a$ & $e_{\t_2,0}-ma$ & $\dots$\\
     \hline
    $\cR$& $b$ & $\frac{|\cR|-|\t_1|}{2}b$ & $e_{\cR,0}-b$ & $\dots$ & $e_{\cR,0}-(m-1)b$ & $\color{red}e_{\cR,0}-mb$ & $\color{red}\dots$ \\ 
\end{tabular}
\end{center}
where $m$ is the least positive integer such that $e_{\cR,0}-mb\geq e_{\t_2,0}-ma$. Then $m=\big\lfloor\frac{|\t_2|-1}{2}\big\rfloor$ and $\s_0=\dots=\s_{m-1}=\t_2, \s_m=\dots=\s_{g-1}=\cR$. Note that he twin $\t_1$ is never selected, and $\omega_0^\circ,\dots,\omega_{m-1}^\circ$ form a basis of integral differentials of $C_{\t_2}\colon y^2=\prod_{r\in\t_2}(x-r)$. These are general phenomena (see \cite[Lemma 4.2]{Kunzweiler}).
\end{ex}

\newpage
\section{Minimal discriminant (semistable case)} \label{sec:minimaldiscriminant}
	
	The {\em discriminant} $\Delta_C$ of $C$ is given by
	\[
	\Delta_C = 16^g \, c^{4g+2}\, \textrm{disc}\Big(\frac{1}{c} f(x)\Big).
	\]
	
	The following theorem provides a formula to compute the valuation of the discriminant in terms of cluster pictures. 
	
	\begin{theorem}
		\label{thm:discriminant}
		The valuation of the discriminant of $C$ is given by
		\begin{align*}
		v(\Delta_C)=&\, v(c)(4g+2) + \sum_{\s \textrm{ proper}} \delta_{\s} |\s|(|\s|-1),
		\end{align*}
		where $\delta_{\s} = d_{\cR}$ when $\s = \cR$.
	\end{theorem}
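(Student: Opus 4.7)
The plan is to reduce the statement to a purely combinatorial identity about cluster depths, starting from the classical formula for the discriminant of a polynomial in terms of its roots.

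First I would expand the definition of $\Delta_C$ in terms of the roots. Since $f(x)/c$ is monic of degree $n \in \{2g+1, 2g+2\}$, the discriminant of $f/c$ equals $\prod_{i<j}(r_i - r_j)^2$, where $\cR = \{r_1, \ldots, r_n\}$. As $p$ is odd we have $v(16^g) = 0$, and therefore
\[
v(\Delta_C) = (4g+2)\, v(c) \;+\; \sum_{\substack{r, r' \in \cR \\ r \neq r'}} v(r-r'),
\]
where the sum is over ordered pairs of distinct roots (this accounts for the factor of $2$ from the squares).

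Next I would rewrite $v(r-r')$ in terms of clusters. By definition of depth, $v(r-r')$ equals $d_{\s(r,r')}$, where $\s(r,r') := \{r\} \wedge \{r'\}$ is the smallest cluster containing both $r$ and $r'$. Iterating the relation $d_\s = \delta_\s + d_{P(\s)}$ and using the convention $\delta_\cR = d_\cR$ gives the telescoping identity
\[
d_{\s(r,r')} \;=\; \sum_{\substack{\s \text{ proper} \\ \s \supseteq \s(r,r')}} \delta_\s \;=\; \sum_{\substack{\s \text{ proper} \\ r, r' \in \s}} \delta_\s.
\]

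Finally I would substitute this expression and swap the order of summation. Since the number of ordered pairs $(r, r')$ of distinct roots contained in a proper cluster $\s$ is exactly $|\s|(|\s|-1)$, this yields
\[
\sum_{\substack{r, r' \in \cR \\ r \neq r'}} v(r-r') \;=\; \sum_{\s \text{ proper}} \delta_\s \cdot \#\{(r,r') : r \neq r',\ r, r' \in \s\} \;=\; \sum_{\s \text{ proper}} \delta_\s \, |\s|(|\s|-1),
\]
which combined with the first displayed equation gives the theorem.

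There is no real obstacle here: the argument is a short manipulation once one recognises that $v(r-r')$ decomposes along the chain of clusters above $\s(r,r')$. The only point requiring mild care is the bookkeeping with the convention $\delta_\cR = d_\cR$, which ensures that the telescoping sum starts at the correct top level. Note that the proof makes no use of the semistability hypothesis, and indeed the formula is valid in general.
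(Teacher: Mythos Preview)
Your proof is correct and complete. The paper itself does not give a proof of this statement (it cites \cite[Theorem~16.2, Lemma~16.5]{m2d2}), but your argument is the natural one: expand $v(\Delta_C)$ as a sum of $v(r-r')$ over pairs of roots, recognise each term as the depth of the smallest cluster containing both roots, telescope along the chain of ancestors, and swap the order of summation.
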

	
	Let $v(\Delta_C^{\min})$ denote the valuation of the minimal discriminant\footnote{The valuation of the minimal discriminant is the minimum of $v(\Delta)$ amongst all integral Weierstrass equations for $C$.} of the curve~$C$. If $C$ has semistable reduction, one may read off this quantity from the cluster picture  or from the centred BY tree associated to the equation.
	
	\begin{theorem}
		\label{thm:minimal_discriminant}
		If $C/K$ is semistable and $|k| > 2g+1$, then
		\[
		\frac{v(\Delta_C) - v(\Delta_C^{\min})}{4g+2} 
		= v(c) - E + \sum_{g+1<|\s|} \delta_{\s} (|\s|-g-1),
		\]
		where $\delta_{\s} = d_{\cR}$ when $\s = \cR$, and $E = 0$ unless there are two clusters of size $g+1$ that are permuted by Frobenius and $v(c)$ is odd, in which case $E = 1$.
	\end{theorem}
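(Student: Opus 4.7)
My plan is to parameterise all integral Weierstrass models of $C$ by $G_K$-stable discs in $\mathbb{P}^1(\Kbar)$, compute the discriminant of each using Theorem~\ref{thm:discriminant}, and optimise to identify the minimal model. Every integral Weierstrass equation arises from applying a Möbius transformation $\phi \in \mathrm{PGL}_2(K)$ followed by a rescaling of $y$; such $\phi$ correspond to $G_K$-stable discs $D \subseteq \mathbb{P}^1(\Kbar)$ (the preimage of the integral disc) together with a choice of ``direction to infinity''. For a disc $D$ with $\cR \subseteq D \subset \mathbb{A}^1$, the transformed cluster picture has the same internal cluster structure but with $d_\cR$ replaced by $d_\cR - d_D$ and leading coefficient valuation $\omega_D(f) = \nu_D(f)\bmod 2$. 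Theorem~\ref{thm:discriminant} then gives
\[
v(\Delta_{C_D}) = (4g+2)\,\omega_D(f) + (d_\cR - d_D)(2g+2)(2g+1) + \sum_{\substack{\s\subsetneq\cR \\ \s \text{ proper}}} \delta_\s |\s|(|\s|-1),
\]
with analogous formulas when some roots are moved to infinity.

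Next, I will show that the optimal disc is obtained by iteratively ``centering'' on the largest cluster that can still support a valid hyperelliptic equation. The key combinatorial fact is that for each proper cluster $\s$ with $|\s| > g+1$, one may choose $D$ to center on $\s$ (with $|\cR \setminus \s| < g+1$ moved to the opposite side of the Berkovich tree), producing an integral model whose discriminant is smaller by exactly $(4g+2)\delta_\s(|\s|-g-1)$ than the ambient model centered on the parent. Crucially, clusters of size $> g+1$ are totally ordered by inclusion (two disjoint such clusters would have total size $> 2g+2 = |\cR|$, a contradiction), so these savings accumulate along a chain to produce the sum in the formula. The hypothesis $|k| > 2g+1$ guarantees enough residue classes for a $G_K$-rational centre $z_D$ to exist at each step of the iteration.

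The correction term $E$ captures a parity obstruction in the leading coefficient. For a model with $\deg f = 2g+2$, one has $\nu_D(f) = v(c) + (2g+2)d_D$ with $(2g+2)d_D$ even, so $\omega_D(f) \equiv v(c) \pmod 2$, and the new leading coefficient cannot be made a unit when $v(c)$ is odd. This parity can be flipped by passing to a model of degree $2g+1$ (moving a single root to infinity, which changes $\omega_D(f)$ by $d_D \bmod 2$), but this requires a $G_K$-invariant choice of which root or cluster to invert. Generically such a choice exists (hence $E=0$), but when $\cR = \s_1 \sqcup \s_2$ with $|\s_1|=|\s_2|=g+1$ and Frobenius permutes the $\s_i$, the two natural candidate inversions are Galois-conjugate, and no $K$-rational one exists; the minimal model must then keep $v(c') = 1$ when $v(c)$ is odd, giving $E = 1$.

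The main obstacle is the rigorous combinatorial verification that the greedy iteration above attains the global minimum across all choices of $D$, and that no additional exotic Möbius transformation further reduces $v(\Delta)$. This requires showing that any reduction of the discriminant can be decomposed as a sequence of ``cluster centerings'' together with at most one parity-flipping inversion, and carefully tracking how the $G_K$-stability constraints interact with the BY tree structure; the parity analysis producing $E$ depends on the Frobenius orbit structure on clusters of size exactly $g+1$, which is where the hypothesis on $|k|$ and the assumption of semistability combine to determine the answer.
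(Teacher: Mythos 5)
Your strategy — compare discriminants via Theorem~\ref{thm:discriminant}, parameterise integral models by M\"obius transformations, and iteratively ``centre'' on the clusters of size $>g+1$ — is the right one, and matches the toolkit the paper sets up (Theorems~\ref{thm:isomhaveequiv}, \ref{thm:nearlywmodel}, \ref{thm:minwmodelsscase}, Remark~\ref{rem:mobiusmaps}). Your per-cluster savings computation is also correct: making $\s$ the top cluster replaces its contribution $\delta_\s|\s|(|\s|-1)$ with the cocluster's contribution $\delta_\s(2g+2-|\s|)(2g+1-|\s|)$, a drop of exactly $(4g+2)\delta_\s(|\s|-g-1)$, and the total-ordering of clusters of size $>g+1$ by inclusion is right and is what makes these savings accumulate cleanly down the chain. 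Note, however, that the paper does not prove Theorem~\ref{thm:minimal_discriminant} itself (it cites \cite[Theorem 16.2]{m2d2}); the appendix proof of the BY-tree version (Theorem~\ref{thm:discriminant_by}) takes it as input and couples it with Theorems~\ref{thm:nearlywmodel} and \ref{thm:minwmodelsscase}.

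The gap you flag is real and is the crux. Your argument establishes only an upper bound: you exhibit one integral model whose discriminant equals $v(\Delta_C)-(4g+2)\bigl(v(c)-E+\sum\delta_\s(|\s|-g-1)\bigr)$, i.e.\ $v(\Delta_C^{\min})$ is at most this. You do not prove the reverse inequality, which is where the actual content of the theorem lies. That lower bound is precisely Theorem~\ref{thm:minwmodelsscase}, which characterises minimal semistable Weierstrass equations in cluster terms, and this is where semistability and $|k|>2g+1$ truly enter: the semistability criterion (Theorem~\ref{semistability criterion subsection}) forces $v(c')\equiv\delta_i(g+1)\pmod 2$ for the Frobenius-swapped pair of size-$(g+1)$ clusters — this is what pins down $E$ — and $|k|>2g+1$ guarantees a $K$-rational centre for the inversion discs (cf.~\cite[Lemma B.1]{m2d2}). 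You should either appeal to Theorem~\ref{thm:minwmodelsscase}, or give an independent lower-bound argument: characterise, via Theorem~\ref{thm:integralcriterion}, what constraints integrality places on $(d_\cR,v(c))$ and the cluster picture, and minimise the discriminant formula over those constraints. Finally, be aware that $v(c)$ is \emph{not} invariant under the cluster inversions of Remark~\ref{rem:mobiusmaps}(4); you must track its transformation explicitly, or the discriminant drop per step is not rigorously computed.
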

	
	
	\begin{definition}
		\label{def:centredBYtree_sec16}
		For a connected subgraph $T$ of a BY tree, we define a genus function by $g(T) = \#(\textrm{connected components of the blue part}) - 1 + \sum_{v \in V(T)} g(v)$. 
		
		If there is an edge $e \in E(T_C)$ such that both trees in $T_C\setminus\{e\}$ have equal genus (i.e. genus $\lfloor \frac{g}{2}\rfloor$), then we insert a vertex $z_T$ on the midpoint of $e$ and call it the {\em centre} of $T_C$. Otherwise, there exists a unique vertex $v \in V(T_C)$ such that all trees in $T_C\setminus \{v\}$ have genus smaller than $g/2$. In this case $z_T = v$ is the centre of $T_C$.
		In both cases, the {\em centred BY tree} $T^*_C$ is the tree with vertex set $V(T_C)\cup \{z_T\}$;   we denote by $\preceq$ the partial ordering on $V(T^*_C)$ with maximal element $z_T$.  
	\end{definition}
	
	\begin{notation}	
		Define a weight function on $V(T^*_C)$  by \[
		S(v) = \sum_{v'\preceq v \textrm{ blue}}  (2g(v') + 2 - \#\textrm{blue edges at } v'). 
		\]
		For each $v \neq z_T$, write $e_v$ for the edge connecting $v$ with its parent, i.e. the vertex connected to $v$ lying on the path to the centre of $T^*_C$. Let $\delta_v = \textrm{length}(e_v)$ if $e_v$ is blue, and $\delta_{v} = 1/2 \cdot \textrm{length}(e_v)$ if $e_v$ is  yellow.
	\end{notation}
	
	\begin{theorem}
		\label{thm:discriminant_by_sec16}
		Suppose that $C$ is semistable and $|k|>2g+1$. Let $T_C^*$ be the centred BY tree associated to $C$. Then the valuation of the minimal discriminant of $C$ is given by
		\[
		v(\Delta_C^{\min}) 
		=  E\cdot (4g+2) + \sum_{v \neq z_T} \delta_v S(v)(S(v)-1),
		\]
		where $E = 0$ unless $z_T$ has exactly two children $v_1,v_2$ with $S(v_1)=S(v_2)=g+1$ that are permuted by Frobenius and $(g+1)\delta_{v_1}$, $(g+1)\delta_{v_2}$ are odd, in which case $E=1$.
	\end{theorem}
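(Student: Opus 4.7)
My plan is to deduce Theorem \ref{thm:discriminant_by_sec16} from Theorem \ref{thm:minimal_discriminant} by algebraically rewriting the cluster-theoretic formula and then identifying the resulting terms with edge contributions of the centred BY tree.

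\textbf{Step 1 (algebraic reduction).} First I would combine Theorems \ref{thm:discriminant} and \ref{thm:minimal_discriminant}; the $v(c)$ terms cancel and one obtains
\[
v(\Delta_C^{\min}) = (4g+2)E + \sum_{\s \text{ proper}} \delta_\s \, h(|\s|),
\]
where $h(n) = n(n-1)$ for $n \le g+1$ and $h(n) = (2g+2-n)(2g+1-n)$ for $n > g+1$. The identity $n(n-1) - (4g+2)(n-g-1) = (2g+2-n)(2g+1-n)$ justifies the simplification and simultaneously shows $h$ is symmetric about $g+1$, with $h(n) = m(m-1)$ for $m = \min(n, 2g+2-n)$. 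Since $h$ vanishes at $n \in \{2g+1, 2g+2\}$, the clusters $\cR$ and any size-$(2g+1)$ child of $\cR$ contribute nothing; this matches their omission from the BY tree in cases 2 and 3 of Definition \ref{StoTC}.

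\textbf{Step 2 (interpreting $S(v)$).} Next I verify that, for every non-$z_T$ vertex $v$, $S(v)$ equals the number of roots on the $v$-side of $T_C^*$ (the side obtained by removing $e_v$). For a blue vertex $v_\s$, the blue edges at $v_\s$ correspond to odd proper children of $\s$ together with an edge to its parent if $\s$ is itself odd. Combining this with $|\tilde\s| \in \{2g(v_\s)+1, 2g(v_\s)+2\}$ yields
\[
2g(v_\s) + 2 - \#\{\text{blue edges at }v_\s\} = \#\{\text{singleton children of }\s\}.
\]
Übereven clusters have only even children, so no singleton lies under a yellow vertex; every root is thus a singleton child of exactly one blue cluster, and summing the above identity over descendants of $v$ gives $S(v) = \#\{\text{roots on } v\text{-side}\}$.

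\textbf{Step 3 (matching the sums).} Each non-$z_T$ vertex $v$ determines a cluster $\s_v$ on its side of $e_v$ with $|\s_v| = S(v) \le g+1$ by centring. Using the BY-tree edge-length conventions (blue length $\delta_\s$, yellow length $2\delta_\s$), a case check shows $\delta_v = \delta_{\s_v}$ whenever $e_v$ is not adjacent to an inserted $z_T$, so the contribution $\delta_v S(v)(S(v)-1)$ equals $\delta_{\s_v} h(|\s_v|)$. In the exceptional case where $z_T$ is inserted at the midpoint of an edge arising from $\cR = \s_1 \sqcup \s_2$ with equal-genus subtrees (forcing $|\s_1| = |\s_2| = g+1$), the two children $v_1, v_2$ of $z_T$ satisfy $\delta_{v_1} = \delta_{v_2} = (\delta_{\s_1}+\delta_{\s_2})/2$ and $S(v_i) = g+1$, so their combined contribution $(\delta_{\s_1}+\delta_{\s_2})(g+1)g$ equals $\delta_{\s_1} h(g+1) + \delta_{\s_2} h(g+1)$. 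Summing over all vertices recovers the cluster sum.

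\textbf{Step 4 (error terms and main obstacle).} Finally, the conditions defining $E$ and $E'$ correspond under Step 3: two clusters of size $g+1$ permuted by Frobenius arise exactly in the split-edge case, and match two $z_T$-children $v_1, v_2$ with $S(v_i) = g+1$ permuted by Frobenius. For the parity condition, semistability gives (for principal $\s_i$) $\nu_{\s_i} = v(c) + (g+1)d_{\s_i} + (g+1)d_\cR \in 2\mathbb{Z}$; this yields $(g+1)d_\cR \in \mathbb{Z}$ and hence $v(c) \equiv (g+1)\delta_{\s_i} \pmod 2$. Because Frobenius permutes $\s_1, \s_2$, one has $\delta_{\s_1} = \delta_{\s_2}$, so $(g+1)\delta_{v_i} = (g+1)(\delta_{\s_1}+\delta_{\s_2})/2 = (g+1)\delta_{\s_i}$, making the two odd-parity conditions equivalent. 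I expect this last reconciliation to be the main obstacle: $\cR$ is not principal in this split-edge scenario (it has only two proper children), so $d_\cR$ may be non-integral, and one must extract the needed integrality purely from the principal children; similarly the Frobenius-permutation hypothesis is essential to avoid the $1/2$ factor destroying the parity comparison.
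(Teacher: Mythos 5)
Your plan is a genuine alternative to the paper's argument in spirit, though both ultimately hinge on passing to a canonical Weierstrass model. The paper builds an abstract cluster picture $\Sigma_1$ directly from $T_C^*$ (with $2g+2$ roots, $|\s_v|=S(v)$, relative depths $\delta_v$), realizes it by a curve $C_1$ over $\bar K$ via Theorems \ref{thm:isomhaveequiv} and \ref{thm:equiviffisomcores}, applies Theorem \ref{thm:discriminant} to read off $v(\Delta_{C_1})$, then refines to a curve $C_2$ over $K$ with $v(c_2)=E$ and invokes Theorem \ref{thm:minimal_discriminant} to identify this with the minimal discriminant. You instead eliminate $v(\Delta_C)$ and $v(c)$ between Theorems \ref{thm:discriminant} and \ref{thm:minimal_discriminant} and try to match $\sum_{\s}\delta_\s h(|\s|)$ directly to $\sum_{v\ne z_T}\delta_v S(v)(S(v)-1)$. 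Your Step 1 algebra and the parity reconciliation in Step 4 are correct.

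The gap is in Steps 2--3. The identity
\[
2g(v_\s)+2-\#\{\text{blue edges at }v_\s\}=\#\{\text{singleton children of }\s\}
\]
fails precisely when $\s$ is odd but has no parent edge in $T_C$: for $\s=\cR$ with $|\cR|=2g+1$, and for the size-$(2g+1)$ child whose edge to $v_\cR$ is deleted in Definition \ref{StoTC}, one gets $s(v_\s)=\#\{\text{singletons}\}+1$. Consequently $S(v)$ is \emph{not} the number of roots on the $v$-side of $e_v$ (rather, it is that plus one whenever $v_\cR$ or the $(2g+1)$-child lies $\preceq v$). The extra $+1$ is exactly what lets $S(v)(S(v)-1)$ equal $h(|\s'|)=(2g+2-|\s'|)(2g+1-|\s'|)$ for the cluster $\s'$ on the $z_T$-side, so the final formula is still true, but your Step 2 as stated is false and Step 3's claim $|\s_v|=S(v)$ inherits the error. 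A second issue: in Step 1 both $E$ and $\sum_\s\delta_\s h(|\s|)$ are model-dependent (only their combination is not), whereas the BY-tree sum is intrinsic; the set of roots "on the $v$-side" need not even be a cluster of $\Sigma_C$, and $\delta_v$ equals the relative depth of the \emph{child} cluster attached to $e_v$, which may lie on the $z_T$-side rather than the $v$-side. To repair all of this you must first replace the given equation by one with $\deg f=2g+2$ and no proper cluster of size $>g+1$ other than $\cR$, as Theorem \ref{thm:nearlywmodel} permits --- whereupon the off-by-one disappears and the vertex--cluster bijection becomes clean. Making that reduction explicit is essentially what the paper's construction of $\Sigma_1$ and $C_2$ does, so after the fix your route and the paper's converge more than Step 1 suggests.
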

	
	\newpage

	\begin{example}
		Consider \(C: y^2 = p(x^2-p^5)(x^3-p^3)((x-1)^3-p^9)\) over $\QQ_p$ for $p > 7$. This is a genus $3$ hyperelliptic curve with cluster picture
		$$
		\clusterpicture           
		\Root[A] {} {first} {r1};
		\Root[A] {} {r1} {r2};
		\Root[A] {4} {r2} {r3};
		\Root[A] {} {r3} {r4};
		\Root[A] {} {r4} {r5};
		\Root[A] {5} {r5} {r6};
		\Root[A] {} {r6} {r7};
		\Root[A] {} {r7} {r8};
		\ClusterLDName c1[][{\frac 32}][\s_1] = (r1)(r2);
		\ClusterLDName c2[][{1}][\s_2] = (c1)(r3)(r4)(r5);
		\ClusterLDName c3[][{3}][\s_3] = (r6)(r7)(r8);
		\ClusterLD c4[][{0}] = (c2)(c3);
		\endclusterpicture.
		$$
		Using the formula from Theorem \ref{thm:discriminant}, we get that the valuation of the discriminant of the equation is
		\[
		v(\Delta_C) = 1 \cdot (4\cdot 3 + 2) +  3/2 \cdot 2 \cdot 1 + 1 \cdot 5 \cdot 4 + 3 \cdot 3 \cdot 2 = 55.
		\]
		
		Since $C$ has semistable reduction and $|\F_p| > 7$, we may now apply Theorem \ref{thm:minimal_discriminant} in order to find the valuation of the minimal discriminant.
		The right hand side of the equation in that theorem is
		\(
		v(c) - E + \sum_{g+1<|\s|} \delta_{\s} (|\s|-g-1) = 2
		\),
		hence
		\(
		v(\Delta_C^{\min}) = v(\Delta_C) - 2 \cdot (4g+2) = 27.
		\)
		
		Alternatively, we could have used the associated BY tree $T_C$:\vspace{-.2cm}
		$$	{\begin{tikzpicture}[scale=\GraphScale]
			\BlueVertices
			\VertexLN[x=3.00,y=0.000,L=1]{2}{$v_{\s_3}$};
			\VertexLN[x=0.000,y=0.000,L=\relax]{3}{$v_{\s_1}$};
			\VertexLN[x=1.50,y=0.000,L=1]{1}{$v_{\s_2}$};
			\BlueEdges
			\Edge(1)(2)
			\YellowEdges
			\Edge(1)(3)
			\TreeEdgeSignS(1)(2){0.5}{4}
			\TreeEdgeSignS(1)(3){0.5}{3}
			\end{tikzpicture}}$$
			\vskip -.2cm
		In this example $V(T^*_C) = V(T_C)$ and $v_{\s_2}$ is the centre of $T^*_C$. Then $S(v_{\s_1}) \!=\! 2$, $S(v_{\s_3})\!=\! 3$, $\delta_{v_{\s_1}}\!=\! 3/2$ and $\delta_{v_{\s_3}}\! =\! 4$. It follows from Theorem \ref{thm:discriminant_by_sec16} that 
		$v(\Delta^{\min}_C) = 3/2 \cdot 2\cdot 1 + 4 \cdot 3 \cdot 2 = 27$.
	\end{example}

	\begin{example}
		\label{eg:mindisc2}
		Consider the curve \(C: y^2 = 7(x^2+1)(x^2+36)(x^2+64)\) defined over $\QQ_7$. This is a genus $2$ hyperelliptic curve with cluster picture 
		\(
		\clusterpicture           
		\Root[A] {} {first} {r1};
		\Root[A] {} {r1} {r2};
		\Root[A] {} {r2} {r3};
		\Root[A] {4} {r3} {r4};
		\Root[A] {} {r4} {r5};
		\Root[A] {} {r5} {r6};
		\ClusterLDName c1[][{1}][\s_1] = (r1)(r2)(r3);
		\ClusterLDName c2[][{1}][\s_2] = (r4)(r5)(r6);
		\ClusterLD c4[][{0}] = (c1)(c2);
		\endclusterpicture \,.
		\)
		Using one of the formulas from Theorem \ref{thm:discriminant}, we get 
		\(
		v(\Delta_C) = 22.
		\)
		
		Since $C$ has semistable reduction, we can apply Theorem \ref{thm:minimal_discriminant}. Note that the two clusters $\s_1 = \{i,i\pm 7i\}, \, \s_2 = \{-i,-i \pm 7i\}$ are permuted by Frobenius. Therefore $E=1$ here and the right hand side of the formula vanishes. In particular, we find that $v(\Delta_C^{\min}) = v(\Delta_C) = 22$. The minimality of the equation is also implied by Theorem  \ref{thm:minwmodelsscase}, since Condition (1) of that theorem is satisfied.
		
		The minimal discriminant is not invariant under unramified extensions. Let $C_{K}$ denote the base change of $C$ to $K = \QQ_7(i)$. Since the extension is unramified, the cluster picture does not change. However the two clusters $\s_1$ and $\s_2$ are no longer swapped by Frobenius, hence $E=0$ and by Theorem \ref{thm:discriminant}, 
		$v(\Delta_{C_K}^{\min}) = v(\Delta_{C_K}) -(4g+2) = 12$. 
		A minimal Weierstrass equation over $K$ can be attained by the change of variables $x = i({x'+6})/({x'-1})$ and $y = 49{y'}/{(x'-1)^3}$:
		\[
		y'^2 = - x'(x'-2)(2x'+5)(5x'-12)(9x'-2).
		\]
		The cluster picture corresponding to this equation is 
		\(
		\clusterpicture           
		\Root[A] {} {first} {r1};
		\Root[A] {} {r1} {r2};
		\Root[A] {3} {r2} {r3};
		\Root[A] {} {r3} {r4};
		\Root[A] {} {r4} {r5};
		\ClusterLD c1[][{2}] = (r3)(r4)(r5);
		\ClusterLD c2[][{0}] = (r1)(r2)(c1);
		\endclusterpicture \,.
		\)
		
		In both of the above cases the associated BY trees consist of two blue vertices joined by a blue edge of length $2$: 
		\smash{\raise -7pt\hbox{\begin{tikzpicture}[scale=\GraphScale]
				\BlueVertices
				\VertexLE[x=1.500,y=0.000,L=1]{2}{$v_{\s_2}$};
				\VertexLW[x=0.000,y=0.000,L=1]{1}{$v_{\s_1}$};
				\BlueEdges
				\Edge(1)(2)
				\TreeEdgeSignS(1)(2){0.5}{2}
				\end{tikzpicture}}}. The centred BY trees are obtained by adding an additional vertex in the midpoint of the edge joining $v_{\s_1}$ and $v_{\s_2}$: 
		\smash{\raise -7pt\hbox{\begin{tikzpicture}[scale=\GraphScale]
				\BlueVertices
				\VertexLE[x=2.00,y=0.000,L=1]{2}{$v_{\s_2}$};
				\VertexLW[x=0.000,y=0.000,L=1]{1}{$v_{\s_1}$};
				\VertexLN[x=1.0,y=0.000,L=\relax]{3}{};
				\BlueEdges
				\Edge(3)(2)
				\Edge(3)(1)
				\TreeEdgeSignS(3)(1){0.5}{1}
				\TreeEdgeSignS(3)(2){0.5}{1}
				\TreeSignAt(3)(0,0.3){z_T}
				\end{tikzpicture}}}. 
		From the formula in Theorem \ref{thm:discriminant_by_sec16}, we see that the valuation of the minimal discriminant is given by $12 + 10 \cdot E$. The only difference between the (centred) BY trees corresponding to $C$ and $C_K$ is the action of Frobenius, and we have $E=1$ for $C$ and $E=0$ for $C_K$. As before, we find $v(\Delta^{\min}_C) = 22$ and $v(\Delta^{\min}_{C_K}) = 12$. 
	\end{example}

	\newpage
\section{Minimal Weierstrass equation}
\label{se:integweier}

Here we explain how one can tell if a Weierstrass equation is minimal. Recall that a Weierstrass equation of a curve $C/K : y^2=f(x)$ is \emph{integral} if $f(x)\in\cO_K[x]$. It is \emph{minimal} if the valuation of its  discriminant is minimal amongst all integral Weierstrass equations.  

We first characterise when the equation is integral in terms of the cluster picture. Note that the cluster picture of hyperelliptic curve is unchanged by a substitution $x\mapsto x-t$. As a result, for a hyperelliptic curve $C/K:y^2=f(x)$ it is not possible to check whether $f(x)\in\cO_K[x]$ from the cluster picture of $C$, but up to these shifts in the $x$-coordinate this is possible. 
\bigskip
\begin{thm}\label{thm:integralcriterion}
	Let $C/K:y^2=f(x)$ be a hyperelliptic curve and suppose that $G_K$ acts tamely on $\cR$. Then $f(x-z)\in\cO_K[x]$ for some $z\in K$ if and only if 
	either
	\begin{itemize}
		\item $v(c)\geq 0$ and $d_{\cR}\geq0,$ or 
		\item there is a $G_K$-stable proper cluster $\c$ with $d_{\c}\leq 0$ and 
		$$v(c)+(|\c|-|\c'|)d_{\c}+\sum_{r\notin\c}d_{\{r\}\wedge\c}\geq0,$$ 
		for some $\c'$ that is either empty or a $G_K$-stable child $\c'<\c$ with either $|\c'|=1$ or $d_{\c'}\geq 0$.
	\end{itemize} 
\end{thm}
\bigskip

We are further able to give a criterion for checking whether a given Weierstrass equation is in fact minimal.
\bigskip
\begin{thm}\label{thm:genralminweqcriterion}
	Let $C:y^2=f(x)$ be a hyperelliptic curve over $K$ with $f(x)\in\cO_K[x]$. If $d_{\cR}=v(c)=0$ and the cluster picture of $C$ has no cluster $\c\neq\cR$ with $|\c|>g+1$, then $C$ is a minimal Weierstrass equation.
\end{thm}
\bigskip
For semistable hyperelliptic curves, we can give a full characterisation of minimal Weierstrass equations in terms of cluster pictures:
\bigskip
\begin{thm}\label{thm:minwmodelsscase}
	Suppose $C:y^2= f(x)$ is a semistable hyperelliptic curve over $K$ with $f(x)\in\cO_K[x]$, and that $|k|>2g+1$. Then $C$ defines a minimal
	Weierstrass equation if and only if one of the following conditions hold:
	\begin{enumerate}
		\item there are two clusters of size $g+1$ that are swapped by Frobenius, $d_{\cR}=0$ and $v(c)\in\{0,1\}$,
		\item there is no cluster of size $>g+1$ with depth $>0$, but there is some $G_K$-stable cluster $\c$ with $|\c|\geq g+1$, $d_{\c}\geq 0$ and $v(c)=-\sum_{r\notin\c}d_{\{r\}\wedge\c}$.  
	\end{enumerate}
\end{thm}

\bigskip

Using examples we now illustrate how one can easily use cluster pictures and the results of this section to check whether a Weierstrass equation is integral and/or minimal.
\newpage
\begin{example}
	Consider $C:y^2=f(x)=p(x-\frac{1}{p^2})((x-\frac{1}{p^2})^3-p^9)(x-\frac{1}{p^2}-\frac{1}{p})$, a genus $2$ hyperelliptic curve over $\QQ_p$, for some prime $p>3$. Let us use the cluster picture of $C$ to test whether there exists some $z\in K$ such that $f(x-z)\in\cO_K[x]$. The cluster picture of $C$ is as follows: 
		$$\clusterpicture            
		\Root[A] {} {first} {r1};
		\Root[A] {2} {r1} {r2};
		\Root[A] {} {r2} {r3};
		\Root[A] {} {r3} {r4};
		\Root[A] {} {r4}{r5};
		\ClusterLDName c1[][4][\c] = (r2)(r3)(r4)(r5);
		\ClusterLDName c2[][-1][\cR] = (r1)(c1);
		\endclusterpicture 
		\textrm{ with }
		d_{\cR}=-1,\textrm{ and } d_{\c}=3.$$ 
		Note that $\cR$ and $\c$ are both proper and $G_{\QQ_p}$-stable, $\c<\cR$, $d_{\cR}\leq0$, and $d_{\c}\geq0$. A simple calculation gives that
		$$v(c)+(|\cR|-|\c|)d_{\cR}+\sum_{r\notin\cR}d_{\{r\}\wedge\cR}=0.$$ 
			Therefore, by Theorem \ref{thm:integralcriterion}, we conclude that there exists some $z\in K$ such that $f(x-z)\in\cO_K[x]$. Indeed, we can take $z=-\frac{1}{p^2}$.
\end{example}

\begin{example}
	Consider $C:y^2=(x^2-1)(x^3-p)((x-2)^3-p^7)$, a genus $3$ hyperelliptic curve over $\QQ_p$, for some prime $p>3$. The cluster picture of $C$ is as follows:
	$$\clusterpicture            
	\Root[A] {} {first} {r1};
	\Root[A] {} {r1} {r2};
	\Root[A] {2} {r2} {r3};
	\Root[A] {} {r3} {r4};
	\Root[A] {} {r4} {r5};
	\Root[A] {4} {r5}{r6};
	\Root[A] {} {r6}{r7};
	\Root[A] {} {r7}{r8};
	\ClusterLDName c1[][\frac{7}{3}][] = (r6)(r7)(r8);
	\ClusterLDName c2[][\frac{1}{3}][] = (r3)(r4)(r5);
	\ClusterLDName c3[][0][\cR] = (r1)(r2)(c1)(c2);
	\endclusterpicture. $$
	Note that $d_{\cR}=v(c)=0$ and every cluster $\c\neq\cR$ has size $<4$, so by Theorem \ref{thm:genralminweqcriterion} we can conclude that $C$ is a minimal Weierstrass equation.
\end{example}

\begin{example}
		Consider $C:y^2=p^2(x-\frac{1}{p^2})(x^5-1)$, a genus $2$ hyperelliptic curve over $\QQ_p$ for some prime $p>5$. The cluster picture of $C$ is as follows:
	$$\clusterpicture            
	\Root[A] {} {first} {r1};
	\Root[A] {2} {r1} {r2};
	\Root[A] {} {r2} {r3};
	\Root[A] {} {r3} {r4};
	\Root[A] {} {r4} {r5};
	\Root[A] {} {r5}{r6};
	\ClusterLDName c1[][2][\c] = (r2)(r3)(r4)(r5)(r6);
	\ClusterLDName c2[][-2][\cR] = (r1)(c1);
	\endclusterpicture \quad
	\textrm{with }
	d_{\cR}=-2,\textrm{ and }d_{\c}=0.$$
	Note that $d_{\cR},v(c)\neq 0$ and cluster $|\c|=5>3$, so we are unable to conclude by Theorem \ref{thm:genralminweqcriterion} whether or not $C$ is a minimal Weierstrass equation. However, one can easily check that
	the semistability criterion in Section \ref{se:redtype} is satisfied (see the examples in that section for further details of how to check this), so $C$ is semistable. Now, there is no cluster of size $>3$ with depth $>0$, but $\c$ is $G_{\QQ_p}$-stable with $|\c|=5\geq 3$, $d_{\c}=0$, and $2=v(c)=-\sum_{\{r\}\notin\c}d_{\{r\}\wedge\c}$. So, by Theorem \ref{thm:minwmodelsscase} we can conclude that $C$ defines a minimal Weierstrass equation.
\end{example}

\begin{example}
	Consider the hyperelliptic curve $C:y^2=(x^3-p^{15})(x^2-p^6)(x^3-p^3)$ over $\QQ_p$ for some prime $p>7$. We claim that the substitutions $x = p^3x'$ and $y = p^9y'$, result in a minimal Weierstrass equation
	$$ C':y'^2 = (x'^3-p^6)(x'^2-1)(p^6x'^3-1),$$ 
	whose cluster picture is as follows:
	$$\clusterpicture           
	\Root[A] {} {first} {r1};
	\Root[A] {} {r1} {r2};
	\Root[A] {} {r2} {r3};
	\Root[A] {2} {r3} {r4};
	\Root[A] {} {r4} {r5};
	\Root[A] {2} {r5} {r6};
	\Root[A] {} {r6} {r7};
	\Root[A] {} {r7} {r8};
	\ClusterLDName c1[][2][\c_1] = (r6)(r7)(r8);
	\ClusterLDName c2[][2][\c_2] = (c1)(r4)(r5);
	\ClusterLDName c4[][-2][\cR] = (c2)(r1)(r2)(r3);
	\endclusterpicture \quad\textrm{with }d_{\cR}=-2, d_{\c_2}=0, \textrm{ and } d_{\c_1}=2.$$
	We are able verify that $C'$ is indeed minimal. Note that its cluster picture has no cluster of size $>g+1$ with depth $>0$, but $\c_2$ is fixed by $G_{\QQ_p}$, $|\c_2|=5\geq 4$, $d_{\c_2}=0$, and $v(c)=-\sum_{r\notin \c_2}d_{\{r\}\wedge\c_2}=6$. So, since $C'$ is semistable, by Theorem \ref{thm:minwmodelsscase} (2) we have that $C'$ is minimal.
\end{example}

\newpage
\section{Isomorphisms of curves and canonical cluster pictures}
\label{se:iso}

\begin{defn}\label{def:abstractclusterpic}
	Let $X$ be a finite set, $\Sigma$ a collection of non-empty subsets of $X$ (called \emph{clusters}), and some $d_{\c}\in\QQ$ for every $\c\in\Sigma$ of size $>1$, called the \emph{depth} of $\c$. Then $\Sigma$ (or $(\Sigma,X,d)$) is a \emph{cluster picture} if: $X\in\Sigma$ and $\{x\}\in\Sigma$ for every $x\in X$; two clusters are either disjoint or one is contained in the other; for $\c,\c'\in\Sigma$, if $\c'\subsetneq\c$ then $d_{\c'}>d_{\c}$.
	For a hyperelliptic curve $C/K:y^2=f(x)$ denote the \textit{cluster picture} by $\Sigma_{C}=(\Sigma_C,\cR,d)$, the collection of all clusters of $\cR$ with depths. 
	
	Cluster pictures $(\Sigma^i,X^i,d^i)$, $i=1,2$, are \emph{isomorphic} ($\Sigma^1\cong\Sigma^2$) if there is a bijection $\phi:X^1\to X^2$ which induces a bijection from $\Sigma^1$ to $\Sigma^2$ and $d^1_{\c}=d^2_{\phi(\c)}.$ 
\end{defn}

\begin{defn}\label{def:equivclusterpics}
	 We say $\Sigma=(\Sigma,X,d)$ and $\Sigma'=(\Sigma',X',d')$ are \emph{equivalent} if $\Sigma'$ is isomorphic to a cluster picture obtained from $\Sigma$ in a finite number of the steps:
	\begin{enumerate}
		\item \emph{increase the depth of all clusters by $m\in\QQ$}: 
		$d_{\c}'=d_{\c}+m$ for all $\c\in\Sigma,$
		
		\item \emph{add a root $r$} if $X$ is odd:
		$X'=X\cup\{r\}, \Sigma'=(\Sigma\cup\{\{r\},X'\})\setminus\{X\},d'_{\c}=d_{\c}$ for all proper $\c\in\Sigma'\setminus\{X'\}$ and $d'_{X'}=d_{X}$,
		
		\item \emph{remove a root $r\in X$} if $X$ is even, $\{r\}<X$ and $X\setminus\{r\}\notin\Sigma$: 
		$X'\!=\!X\setminus \{r\}$, $\Sigma'\!=\!(\Sigma\cup\{X'\})\setminus\{X,\{r\}\},d'_{\c}\!=\!d_{\c}$ for $\c\in\Sigma'\setminus\{X'\}$ proper and $d'_{X'}\!=\!d_{X}$,
		
		\item \emph{redistribute the depth between child $\s<X$ and $\s^{\text{c}}=X\setminus\c$} when $X$ is even: 
		pick $m\in\Q$ with  $-\delta_\s\!\le\! m\le \delta_{\s^\text{c}}$ (if $|\s|\!=\!1$ there is no lower bound on $m$, and similarly for $\s^\text{c}$) and set 
		$X'\!=\!X$, $\Sigma'\!=\!\Sigma\cup\{\s,\s^{\text{c}}\}$, $d'_{X'}\!=\!d_{X}$, 
		$d'_{\t}\!=\!d_{\t}\!+\!m$ for proper clusters $\t\subseteq\s$,  
		$d'_{\t}\!=\!d_{\t}\!-\!m$ for proper clusters $\t\subseteq\s^{\text{c}}$. 
		Here we consider $\delta_{\s^{\text{c}}}\!=\!0$ if $\s^{\text{c}}\notin\Sigma,$ 
		and remove $\s^\text{c}$ from $\Sigma'$ if $\delta'_{\s^\text{c}}\!=\!0$.
		
	\end{enumerate}
For a pictorial description of these moves see Example \ref{eg:moves}.
\end{defn}

\begin{thm}\label{thm:isomhaveequiv}
	If $C_1$ and $C_2$ are isomorphic hyperelliptic curves over $K$, then their cluster pictures are equivalent. Furthermore, if a cluster picture $\Sigma'$ is equivalent to $\Sigma_{C_1}$, then there is a $\bar{K}$-isomorphic hyperelliptic curve $C'/\bar{K}$ with $\Sigma_{C'}\cong\Sigma'$.
\end{thm}

\begin{thm}\label{thm:equiviffisomcores}
	Let $C_1$ and $C_2$ be semistable hyperelliptic curves over $K$. Then $\Sigma_{C_1}$ and $\Sigma_{C_2}$ are equivalent if and only if the BY trees $T_{C_1}$ and $T_{C_2}$ are isomorphic. 
\end{thm}

It turns out that, provided $|k|>2g+1$, every equivalence class of cluster pictures of semistable hyperelliptic curves has an `almost canonical' representative.

\begin{thm}\label{thm:nearlywmodel}
	Let $C'/K$ be a semistable hyperelliptic curve and suppose that $|k|>2g+1$. Then there is a $K$-isomorphic curve $C:y^2=f(x)$ with $f(x)\in\cO_K[x]$, $\textrm{deg}(f)=2g+2$ such that
	\begin{enumerate}
		\item $d_\cR=0$, 
		\item the cluster picture of $C$ has no cluster of size $>g+1$ other than $\cR$ and
		\item either there is at most one cluster in $\Sigma_{C}$ of size $g+1$ and $v(c)=0$, or $\Frob$ swaps two clusters of size $g+1$ and $v(c)\in\{0,1\}$.
	\end{enumerate}
	Furthermore, if $C'$ has even genus, then we may replace (3) by the following:
	\begin{enumerate}
		\item[(3')] either $v(c)=0$ and there is no cluster of size $g+1$, or $v(c)\in\{0,1\}$ and there are two clusters of size $g+1$ with equal depths.
	\end{enumerate}
	In the even genus case, any other $K$-isomorphic curve satisfying (1), (2), and (3') has the same cluster picture and valuation of leading term as $C$.
\end{thm}

For a semistable hyperelliptic curve $C/K$, to practically use BY trees to find the canonical representative of the equivalence class of $\Sigma_C$, attach an open yellow edge to the ``\emph{centre}'' (\cite[Definition 5.13]{hyble}) of $T_C$. For a more detailed explanation of this see Remarks \ref{rem:canonical_clusterpic_for_BY} and \ref{rem:openBY_centredBY}.

\newpage

\begin{example}
	Consider the hyperelliptic curve $C\colon y^2=x^6-1$ over $\QQ_p$, for some prime $p\neq 3$, where $\Sigma_C=
	\clusterpicture            
	\Root[A] {} {first} {r1};
	\Root[A] {} {r1} {r2};
	\Root[A] {} {r2} {r3};
	\Root[A] {} {r3} {r4};
	\Root[A] {} {r4} {r5};
	\Root[A] {} {r5}{r6};
	\ClusterLDName c1[][0][] = (r1)(r2)(r3)(r4)(r5)(r6);
	\endclusterpicture$.
	By Definition \ref{def:equivclusterpics} (1) we may increase the depth of $\cR$ by $m=\frac{1}{3}$ to obtain an equivalent cluster picture. Theorem \ref{thm:isomhaveequiv} tells us there is some $\bar{\QQ}_p$-isomorphic curve $C'/\bar{\QQ}_p$ with this cluster picture. In particular, we find that under the transformations $x=x'/p^{1/3}$ and $y=y'/p$, $C$ is ${\QQ_p}(\sqrt[3]{p})$-isomorphic to $C'/{\QQ_p}(\sqrt[3]{p}):y'^2=x'^6-p^2$.
\end{example}
    
\begin{example}\label{eg:moves}
	Consider the hyperelliptic curve $C/\QQ_7 : y^2=(x^2-1)(x^4-7^8)$. It has cluster picture
	$\Sigma_C=
	\clusterpicture            
	\Root[A] {} {first} {r1};
	\Root[A] {} {r1} {r2};
	\Root[A] {2} {r2} {r3};
	\Root[A] {} {r3} {r4};
	\Root[A] {} {r4} {r5};
	\Root[A] {} {r5}{r6};
	\ClusterLDName c1[][2][] = (r3)(r4)(r5)(r6);
	\ClusterLDName c2[][0][] = (r1)(r2)(c1);
	\endclusterpicture 
	\textrm{ with }
	\cR = \{1, -1, 7^2, -7^2, 7^2i, -7^2i\}.$
	Definition \ref{def:equivclusterpics} gives us that the equivalence class of $\Sigma_C$ is as follows:
	\vspace{-11px}
\begin{figure}[h]
	\centering
	\begin{tikzpicture}
	\draw (0,0) node[left, font=\small]{$
		\scalebox{0.8}{\clusterpicture            
			\Root[A] {} {first} {r1};
			\Root[A] {} {r1} {r2};
			\Root[A] {3} {r2} {r3};
			\Root[A] {} {r3} {r4};
			\Root[A] {} {r4} {r5};
			\Root[A] {2} {r5}{r6};
			\ClusterLDName c1[][2][] = (r1)(r2);
			\ClusterLDName c2[][n][] = (r3)(r4)(r5)(c1);
			\ClusterLDName c3[][][] = (r6)(c2);
			\endclusterpicture 
		}$};
	
	\draw (2.5,0) node[left, font=\small]{$
		\scalebox{0.8}{\clusterpicture            
			\Root[A] {} {first} {r1};
			\Root[A] {} {r1} {r2};
			\Root[A] {3} {r2} {r3};
			\Root[A] {} {r3} {r4};
			\Root[A] {} {r4} {r5};
			\Root[A] {} {r5}{r6};
			\ClusterLDName c1[][2][] = (r1)(r2);
			\ClusterLDName c2[][][] = (r5)(r6)(r3)(r4)(c1);
			\endclusterpicture  
		}$};

	\draw (2.5,-1) node[left, xshift = -0.15cm, 
	font=\small]{$
	\scalebox{0.8}{\clusterpicture            
		\Root[A] {} {first} {r1};
		\Root[A] {} {r1} {r2};
		\Root[A] {3} {r2} {r3};
		\Root[A] {} {r3} {r4};
		\Root[A] {} {r4} {r5};
		\ClusterLDName c1[][2][] = (r1)(r2);
		\ClusterLDName c2[][][] = (r3)(r4)(r5)(c1);
		\endclusterpicture 
	}$};
	
	\draw (5.2,0) node[left, font=\small]{$
		\scalebox{0.8}{\clusterpicture            
			\Root[A] {} {first} {r1};
			\Root[A] {} {r1} {r2};
			\Root[A] {4} {r2} {r3};
			\Root[A] {} {r3} {r4};
			\Root[A] {} {r4} {r5};
			\Root[A] {} {r5}{r6};
			\ClusterLDName c1[][b][] = (r3)(r4)(r5)(r6);
			\ClusterLDName c2[][a][] = (r1)(r2);
			\ClusterLDName c3[][][] = (c1)(c2);
			\endclusterpicture  
		}$};

	\draw (7.7,0) node[left, font=\small]{$
	\scalebox{0.8}{\clusterpicture            
		\Root[A] {} {first} {r1};
		\Root[A] {} {r1} {r2};
		\Root[A] {2} {r2} {r3};
		\Root[A] {} {r3} {r4};
		\Root[A] {} {r4} {r5};
		\Root[A] {} {r5}{r6};
		\ClusterLDName c1[][2][] = (r3)(r4)(r5)(r6);
		\ClusterLDName c2[][][] = (r1)(r2)(c1);
		\endclusterpicture  
	}$};

	\draw (7.7,-1) node[left, xshift=-0.15cm, font=\small]{$
	\scalebox{0.8}{\clusterpicture            
		\Root[A] {} {first} {r1};
		\Root[A] {2} {r1} {r2};
		\Root[A] {} {r2} {r3};
		\Root[A] {} {r3} {r4};
		\Root[A] {} {r4} {r5};
		\ClusterLDName c1[][2][] = (r2)(r3)(r4)(r5);
		\ClusterLDName c2[][][] = (r1)(c1);
		\endclusterpicture 
	}$};

	\draw (10.4,0) node[left, font=\small]{$
	\scalebox{0.8}{ \clusterpicture            
		\Root[A] {} {first} {r1};
		\Root[A] {2} {r1} {r2};
		\Root[A] {2} {r2} {r3};
		\Root[A] {} {r3} {r4};
		\Root[A] {} {r4} {r5};
		\Root[A] {} {r5}{r6};
		\ClusterLDName c1[][2][] = (r3)(r4)(r5)(r6);
		\ClusterLDName c2[][n][] = (r2)(c1);
		\ClusterLDName c3[][][] = (r1)(c2);
		\endclusterpicture 
	}$};

	\draw[->](-0.05,0) -- node[below,font=\tiny]{(4)} ++ (0.6,0);
	\draw[->](2.45,0) -- node[below,font=\tiny]{(4)} ++ (0.6,0);
	\draw[->](5.15,0) -- node[below,font=\tiny]{(4)} ++ (0.6,0);
	\draw[->](7.65,0) -- node[below,font=\tiny]{(4)} ++ (0.6,0);
	\draw[->](1.5,-0.35) -- node[right,font=\tiny]{(3)} ++ (0,-0.4);
	\draw[->](6.7,-0.35) -- node[right,font=\tiny]{(3)} ++ (0,-0.4);
	\end{tikzpicture}
\end{figure}\vspace{-11px}\\
Here the top clusters' depths are not written as these can take any value, due to Definition \ref{def:equivclusterpics} (1), and $n,a,b \in \Q_{>0}$ with $a+b=2$. Vertical lines indicate that a root has been added or removed as in Definition \ref{def:equivclusterpics} (2) and (3). Horizontal lines indicate that the depth of a child $\c<\cR$ has been redistributed to $\cR\setminus\c$ as described in Definition \ref{def:equivclusterpics} (4).

Let $C_1/\QQ_7:y^2=(x^2-7^4)(x^4-1)$, this is isomorphic to $C$ over $\QQ_7$ and has
\vspace{-5px}
$$\Sigma_{C_1}=\clusterpicture            
\Root[A] {} {first} {r1};
\Root[A] {} {r1} {r2};
\Root[A] {3} {r2} {r3};
\Root[A] {} {r3} {r4};
\Root[A] {} {r4} {r5};
\Root[A] {} {r5}{r6};
\ClusterLDName c1[][2][] = (r1)(r2);
\ClusterLDName c2[][0][] = (r5)(r6)(r3)(r4)(c1);
\endclusterpicture.$$
So, $\Sigma_{C_1}$ is in the equivalence class of $\Sigma_C$,  verifying the first part of Theorem \ref{thm:isomhaveequiv}. 

Consider the transformation $x \to \frac{\sqrt[5]{7}}{x+\sqrt[5]{7}}$. It gives a model $C_2$ for $C/\Q_7(\sqrt[5]{7})$ with roots $ \frac{\sqrt[5]{7}}{1+\sqrt[5]{7}},  \frac{\sqrt[5]{7}}{-1+\sqrt[5]{7}},  \frac{1}{1+\sqrt[5]{7}^9},  \frac{1}{1-\sqrt[5]{7}^9},  \frac{1}{1+i\sqrt[5]{7}^9},  \frac{1}{1-i\sqrt[5]{7}^9}$, and cluster picture  
$$\clusterpicture            
\Root[A] {} {first} {r1};
\Root[A] {} {r1} {r2};
\Root[A] {4} {r2} {r3};
\Root[A] {} {r3} {r4};
\Root[A] {} {r4} {r5};
\Root[A] {} {r5}{r6};
\ClusterLDName c1[][\frac 15][] = (r1)(r2);
\ClusterLDName c2[][\frac 95][] = (r3)(r4)(r5)(r6);
\ClusterLDName c3[][0][] = (c1)(c2);
\endclusterpicture.$$
This illustrates how to obtain the middle picture with $a\!=\! \frac 15$ and $b\!=\! \frac 95$ over~$\bar{\Q}_7$. 

All of $C,C_1,$ and $C_2$ have the following BY tree:
\smash{\raise -6pt\hbox{\begin{tikzpicture}[scale=\GraphScale]
\BlueVertices
\Vertex[x=1.50,y=0.000,L=1]{1};
\Vertex[x=0.000,y=0.000,L=\relax]{2};
\YellowEdges
\Edge(1)(2)
\TreeEdgeSignS(1)(2){0.5}{4}\TreeEdgeSignN(1)(2){0.5}{}
\end{tikzpicture}}}.
Indeed, so does any other hyperelliptic curve with a cluster picture in the equivalence class of $\Sigma_C$. Conversely, any hyperelliptic curve $C'$ with BY tree $T_{C'}=T_C$ would need to have its cluster picture %
 in the equivalence class of $\Sigma_C$. 
\end{example}

\begin{remark}\label{rem:mobiusmaps}
	It is useful to note that the steps described in Definition \ref{def:equivclusterpics} can be made by applying the following M\"obius transformations to the roots in $\cR$:
	\begin{enumerate}
		\item $\phi(z)=\pi^m z$ (for $m\in\Q$),
		\item $\phi(z)=\frac{1}{z}$ (after first shifting by $z_{\cR}\in K$, i.e. applying $\phi'(z)=z-z_{\cR}$),
		\item $\phi(z)=\frac{1}{z}$ (first shifting by $r$ and using (1) to assume that $z_{\cR}=r=d_\cR=0$),
		\item $\phi(z)=\frac{\pi^a}{z}$ (first scaling so $d_\cR=0$, and shifting so $v(r)=a$  for $r \in \s$).	\end{enumerate}
\end{remark}

\begin{example}
	By Theorem \ref{thm:nearlywmodel} any semistable genus $2$ hyperelliptic curve, where $|k|\!>\!2g\!+\!1$, has a model with one of the following cluster pictures with $m,n,t \in \Z$:
	\vspace{-12px}
	\begin{figure}[h]
		\centering
		\begin{tikzpicture}
		\draw (0.2,0) node[left, font=\small]{$
			\scalebox{0.8}{\clusterpicture            
				\Root[A] {} {first} {r1};
				\Root[A] {} {r1} {r2};
				\Root[A] {} {r2} {r3};
				\Root[A] {} {r3} {r4};
				\Root[A] {} {r4} {r5};
				\Root[A] {} {r5}{r6};
				\ClusterLDName c1[][][] = (r1)(r2)(r3)(r4)(r5)(r6);
				\endclusterpicture 
			}$};
		
		\draw (2.8,0) node[left, font=\small]{$
			\scalebox{0.8}{\clusterpicture            
				\Root[A] {} {first} {r1};
				\Root[A] {} {r1} {r2};
				\Root[A] {} {r2} {r3};
				\Root[A] {4} {r3} {r4};
				\Root[A] {} {r4} {r5};
				\Root[A] {} {r5}{r6};
				\ClusterLDName c1[][t][] = (r4)(r5)(r6);
				\ClusterLDName c2[][t][] = (r1)(r2)(r3);
				\ClusterLDName c3[][][] = (c1)(c2);
				\endclusterpicture  
			}$};
		
		\draw (5.3,0) node[left, xshift = -0.15cm, 
		font=\small]{$
			\scalebox{0.8}{\clusterpicture            
				\Root[A] {} {first} {r1};
				\Root[A] {} {r1} {r2};
				\Root[A] {} {r2} {r3};
				\Root[A] {} {r3} {r4};
				\Root[A] {2} {r4} {r5};
				\Root[A] {} {r5}{r6};
				\ClusterLDName c1[][\frac{n}{2}][] = (r5)(r6);
				\ClusterLDName c2[][][] = (r1)(r2)(r3)(r4)(c1);
				\endclusterpicture  
			}$};
		
		\draw (8,0) node[left, font=\small]{$
			\scalebox{0.8}{\clusterpicture            
				\Root[A] {} {first} {r1};
				\Root[A] {} {r1} {r2};
				\Root[A] {} {r2} {r3};
				\Root[A] {4} {r3} {r4};
				\Root[A] {2} {r4} {r5};
				\Root[A] {} {r5}{r6};
				\ClusterLDName c1[][\frac{n}{2}][] = (r5)(r6);
				\ClusterLDName c2[][t][] = (r4)(c1);
				\ClusterLDName c3[][t][] = (r1)(r2)(r3);
				\ClusterLDName c4[][][] = (c2)(c3);
				\endclusterpicture  
			}$};

		\draw (1.35,-0.7) node[left, font=\small]{$
			\scalebox{0.8}{ \clusterpicture            
				\Root[A] {} {first} {r1};
				\Root[A] {} {r1} {r2};
				\Root[A] {2} {r2} {r3};
				\Root[A] {} {r3} {r4};
				\Root[A] {4} {r4} {r5};
				\Root[A] {} {r5}{r6};
				\ClusterLDName c1[][\frac{m}{2}][] = (r5)(r6);
m				\ClusterLDName c2[][\frac{n}{2}][] = (r3)(r4);
				\ClusterLDName c3[][][] = (r1)(r2)(c1)(c2);
				\endclusterpicture 
			}$};
		
		\draw (4.1,-0.7) node[left, font=\small]{$
			\scalebox{0.8}{ \clusterpicture            
				\Root[A] {} {first} {r1};
				\Root[A] {} {r1} {r2};
				\Root[A] {4} {r2} {r3};
				\Root[A] {} {r3} {r4};
				\Root[A] {4} {r4} {r5};
				\Root[A] {} {r5}{r6};
				\ClusterLDName c1[][\frac{t}{2}][] = (r5)(r6);
				\ClusterLDName c2[][\frac{m}{2}][] = (r3)(r4);
				\ClusterLDName c3[][\frac{n}{2}][] = (r1)(r2);
				\ClusterLDName c4[][][] = (c3)(c1)(c2);
				\endclusterpicture 
			}$};
		
		\draw (7.2,-0.7) node[left, font=\small]{$
			\scalebox{0.8}{\clusterpicture            
				\Root[A] {} {first} {r1};
				\Root[A] {2} {r1} {r2};
				\Root[A] {} {r2} {r3};
				\Root[A] {6} {r3} {r4};
				\Root[A] {2} {r4} {r5};
				\Root[A] {} {r5}{r6};
				\ClusterLDName c1[][\frac{m}{2}][] = (r5)(r6);
				\ClusterLDName c2[][t][] = (r4)(c1);
				\ClusterLDName c3[][\frac{n}{2}][] = (r2)(r3);
				\ClusterLDName c4[][t][] = (r1)(c3);
				\ClusterLDName c5[][][] = (c2)(c4);
				\endclusterpicture  
			}$};
		
		\end{tikzpicture}
	\end{figure}
\end{example}
\vspace{-10px}

\newpage
\section{Appendix: Minimal discriminant and BY trees (semistable case)}

Throughout this section, it is assumed that $C$ is \underline{semistable}. We give a proof for how to read off $v(\Delta_C^{\min})$ from the BY tree $T_C$ associated to $C$. 

\begin{definition}
	\label{def:genustree}
	For a connected subgraph $T$ of a BY tree, we define a genus function by 
	\(g(T) = \#(\textrm{connected components of the blue part}) - 1 + \sum_{v \in V(T)} g(v).
	\)
\end{definition}
Note that $g(T_C) = g$ as per Lemma \ref{genuscurve}.

\begin{definition}	
	\label{def:centredBYtree}
	If there is an edge $e \in E(T_C)$ such that both trees in $T_C\setminus\{e\}$ have equal genus (i.e. genus $\lfloor \frac{g}{2}\rfloor$), then we insert a genus-0 vertex $z_T$ on the midpoint of $e$, colour it the same as $e$, and call it the {\em centre} of $T_C$. Otherwise, choose $z_T \in V(T_C)$  such that all trees in $T_C\setminus \{z_T\}$\footnote{$T_C\setminus\{z_T\}$ is obtained from $T_C$ by removing $z_T$ together with the incident edges.} have genus smaller than $g/2$.
	In both cases, the {\em centred BY tree} $T^*_C$ is the tree with vertex set $V(T^*_C) = V(T_C) \cup \{z_T\}$; we denote by $\preceq$ the partial order on $V(T^*_C)$ with maximal element $z_T$.  For a vertex $v \in V(T^*_C)$, we say that the vertex connected to $v$ lying on the path to the centre of $T^*_C$ is its {\em parent}. All other vertices connected to $v$ are called {\em children} of $v$. The centre itself does not have a parent.
\end{definition}

\begin{definition}
	Define a weight function on the vertex set $V(T_C)$ by
	\[
	s(v) = \begin{cases}
	2g(v) + 2 - \#\textrm{blue edges at } v & \textrm{if }v \textrm{ is blue,}\\
	0 & \textrm{if }v \textrm{ is yellow.}\\
	\end{cases}
	\]
	For a connected subgraph $T$ of $T_C$, we set
	\(
	s(T) = \sum_{v\in T} s(v).
	\)
\end{definition}

\begin{remark}
	\label{rem:centre_equiv_def}
	Observing that $s(T_C) = 2g+2$, it follows from \cite[Lemma 5.12]{hyble} that exactly one of the following is true.
	\begin{itemize}
		\item There is a unique vertex $v \in V(T_C)$ with the property that  $s(T)<g+1$ for all trees in $T_C \setminus \{v\}$. 
		\item There is a unique edge $e \in E(T_C)$ with the property that $s(T) = g+1$ for both trees in $T_C \setminus \{e\}$. 
	\end{itemize}
	Further $g(T) = \lfloor \frac{s(T) - 1}{2} \rfloor$ for any connected subgraph $T$ of a BY tree (see \cite[Remark 5.14]{hyble}). 
	This shows that the centre of a BY tree is indeed well defined.
\end{remark}

\begin{definition}	
	Define a weight function on $V(T^*_C)$  by 
	\(
	S(v) = \sum_{v'\preceq v}  s(v'). 
	\)

	For each $v \neq z_T$, write $e_v$ for the edge connecting $v$ with its parent and let 
	\[
	\delta_v = 
	\begin{cases}
	\textrm{length}(e_v) &\textrm{if } e_v \textrm{ is blue},\\
	1/2 \cdot \textrm{length}(e_v) & \textrm{if }e_v \textrm{ is yellow}. \end{cases}\]
\end{definition}

\begin{theorem}
	\label{thm:discriminant_by}
	Let $T^*_C$ be the centred BY tree associated to $C$. Suppose $|k|\!>\!2g\!+\!1$. Then the valuation of the minimal discriminant of $C$ is given by
	\[
	v(\Delta_C^{\min}) 
	= E\cdot (4g+2) + \sum_{v \neq z_T} \delta_v S(v)(S(v)-1),
	\]
	where $E = 0$ unless $z_T$ has exactly two children $v_1,v_2$ with $S(v_1)=S(v_2)=g+1$ that are permuted by Frobenius and $\delta_{v_i}(g+1)$ is odd for $i \in \{1,2\}$. In this case $E=1$.
\end{theorem}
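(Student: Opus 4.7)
The plan is to derive Theorem \ref{thm:discriminant_by} from Theorems \ref{thm:discriminant} and \ref{thm:minimal_discriminant} by translating the resulting cluster-picture formula into BY-tree language. Substituting the formula for $v(\Delta_C)$ of Theorem \ref{thm:discriminant} into that of Theorem \ref{thm:minimal_discriminant} yields
\[
v(\Delta_C^{\min}) = E_{\mathrm{cl}}(4g+2) + \sum_{\s \text{ proper}} \delta_\s\, |\s|(|\s|-1) \;-\; (4g+2)\sum_{|\s|>g+1} \delta_\s\,(|\s|-g-1),
\]
where $E_{\mathrm{cl}}$ denotes the $E$ of Theorem \ref{thm:minimal_discriminant} and $\delta_\cR = d_\cR$. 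A short computation using $|\cR|\in\{2g+1,2g+2\}$ shows that the coefficient of $d_\cR$ in this expression vanishes, so the contribution of $\cR$ can be dropped from both sums. The key algebraic identity
\[
|\s|(|\s|-1) - (4g+2)(|\s|-g-1) = (2g+2-|\s|)(2g+1-|\s|)
\]
reinterprets the correction for large clusters as the reflection $|\s|\leftrightarrow 2g+2-|\s|$.

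The combinatorial heart of the proof is then the identity
\[
\sum_{v \neq z_T} \delta_v\, S(v)(S(v)-1) = \sum_{\s\neq\cR \text{ proper}} \delta_\s\, |\s|(|\s|-1) - (4g+2)\sum_{|\s|>g+1} \delta_\s\,(|\s|-g-1).
\]
For each non-root proper cluster $\s$, unwinding the definition of $s(v_\s)$ together with the edge-colour conventions of Definition \ref{StoTC} gives that $s(v_\s)$ equals the number of singleton children of $\s$ (in particular $0$ for \"{u}bereven $\s$, matching the yellow-vertex convention). Since every element of $\s$ is a singleton child of a unique proper subcluster, a telescoping sum yields that the total $s$-value of the subtree of $T_C$ on the $\s$-side of the edge $e_{v_\s}$ equals $|\s|$, and by complementarity the other side has $s$-value $2g+2-|\s|$. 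The length conventions in Definition \ref{StoTC} together with the definition of $\delta_v$ immediately give $\delta_v=\delta_{\s'}$ for each whole edge $e_v$ coming from a parent-child pair $\s'<\s$, and $\delta_v=\delta_{\s'}/2$ for each half of a centre-split edge.

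By Remark \ref{rem:centre_equiv_def}, every subtree of $T_C^*\setminus\{z_T\}$ has total $s$-value at most $g+1$, which uniquely pins down the direction of each edge of $T_C^*$ relative to $z_T$. For the edge $e_v$ corresponding to $\s'<\s$, this forces the subtree below $v$ to be the $\s'$-side (so $S(v)=|\s'|$) when $|\s'|\le g+1$, and the $\s$-side (so $S(v)=2g+2-|\s'|$) when $|\s'|>g+1$; the boundary case $|\s'|=g+1$ corresponds exactly to the centre-split case, whose two half-edges contribute a total of $\delta_{\s'}(g+1)g=\delta_{\s'}|\s'|(|\s'|-1)$. Summing over all edges of $T_C^*$, and handling the $|\cR|=2g+2$ merging and edge-deletion rules of Definition \ref{StoTC} (which introduce zero net modification, consistently with the vanishing root contribution), produces the required identity.

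The main technical obstacle is verifying the equivalence of the two $E$-conditions. Two clusters of size $g+1$ in the cluster picture of $C$ force $|\cR|=2g+2$ with $\cR=\c_1\sqcup\c_2$; the merging rule of Definition \ref{StoTC} then yields a single BY-tree edge whose two subtrees have equal $s$-value $g+1$, forcing $z_T$ to lie at its midpoint with exactly two children $v_{\c_1},v_{\c_2}$ of $S$-value $g+1$. The converse and the transfer of the Frobenius-permutation condition are immediate. For the parity condition, Frobenius permuting $\c_1,\c_2$ forces $\delta_{\c_1}=\delta_{\c_2}$ and hence $\delta_{v_i}=\delta_{\c_i}$ for $i=1,2$. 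Combined with the semistability relation $\nu_{\c_i}=v(c)+(g+1)(\delta_{\c_i}+2d_\cR)\in 2\Z$ for the principal cluster $\c_i$ (Theorem \ref{semistability criterion subsection}) and the observation that the ramification bound from the same theorem forces $(g+1)\cdot 2d_\cR\in 2\Z$ (the potentially problematic case of $g$ even with $d_\cR$ a non-integer half-integer being ruled out by the integrality of $\nu_{\c_i}$), we deduce that $v(c)$ is odd if and only if $(g+1)\delta_{v_i}$ is odd, completing the matching of $E$.
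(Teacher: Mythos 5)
Your proof is correct, but takes a genuinely different route from the paper's. The paper constructs a canonical cluster picture $\Sigma_1$ from the centred BY tree $T_C^*$ (each vertex $v$ contributing $s(v)$ singleton roots, so that $S(v)=|\s_v|$ holds by construction), applies Theorem~\ref{thm:discriminant} to a curve $C_1$ with cluster picture $\Sigma_1$, then uses Theorem~\ref{thm:nearlywmodel} to produce a $K$-isomorphic model $C_2$ whose leading-coefficient valuation is exactly $E$, and finally invokes Theorem~\ref{thm:minimal_discriminant} to recognise $C_2$ as minimal. You instead stay with the original cluster picture of $C$: you substitute Theorem~\ref{thm:discriminant} into Theorem~\ref{thm:minimal_discriminant}, check that the $d_\cR$-coefficient cancels, apply the reflection identity $|\s|(|\s|-1)-(4g+2)(|\s|-g-1)=(2g+2-|\s|)(2g+1-|\s|)$, and then prove a direct combinatorial identity equating the cluster-picture sum with the BY-tree sum. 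The key point is that $s(v_\s)$ counts singleton children of $\s$ for $\s\neq\cR$, so telescoping gives that the $\s$-side of $e_{v_\s}$ has $s$-value $|\s|$ and the other side $2g+2-|\s|$; combined with the edge-length conventions of Definition~\ref{StoTC}, this yields the BY-tree formula term by term. Your route is a more direct verification and avoids ever producing $\Sigma_1$; the paper's route additionally exhibits the canonical representative $\Sigma_1$ of the equivalence class (Remark~\ref{rem:canonical_clusterpic_for_BY}). The two treatments of the $E$-parity condition are equivalent in substance, including your observation that integrality of $\nu_{\c_i}$ rules out the problematic $g$-even, $d_\cR\in\tfrac12+\ZZ$ case. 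The one thing your write-up leaves mostly implicit is the detailed case analysis of the merging and edge-deletion rules of Definition~\ref{StoTC} when $|\cR|=2g+2$ (and the extra ``$+1$'' in $s(v_\cR)$ when $|\cR|=2g+1$); these do need checking, but work out exactly as you assert --- for instance, when $\cR$ has a child of size $2g+1$, the deletion of $v_\cR$ together with its blue edge raises $s$ at the remaining child by $1$, compensating for the lost singleton, and the merged-edge case for $\cR=\c_1\sqcup\c_2$ is consistent because $2g+2-|\c_2|=|\c_1|$.
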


\begin{proof}
	Let $\Sigma = \Sigma_C$ be the cluster picture associated to $C$, see Definition \ref{def:abstractclusterpic} for the definition of abstract cluster pictures. We associate a cluster picture $\Sigma_1 = (\Sigma_1, X_1,d_1)$ to the centred tree $T^*_C$ in the following way.
	
	For every vertex $v \in T^*_C$, define
	\[
	\s_v = \bigcup_{v' \prec v \textrm{ maximal}} \s_{v'} \cup \bigcup_{i=1}^{s(v)} \{r_{v,i}\},
	\]
	where $\{r_{v,i}\}$ are singletons. For $v \neq z_T$, the relative depth of the cluster $\s_v$ is given by  $\delta_{\s_v} = \delta_{v}$.  We have $\s_{z_{T}} = X_1$ and assign to it depth $d_{X_1} = 0$.
	
	The construction of the cluster picture coincides with Construction 4.15 in \cite{hyble}, although phrased in a slightly different language (cf. Remark \ref{rem:openBY_centredBY}). Therefore the BY tree associated to this cluster picture is $T_C$. 
	Moreover, it is clear from the construction that for every vertex $v \in V(T^*_C)$, we have $S(v) = |\s_v|$ and that every cluster $\s \neq \cR$ has size $\leq g+1$. 
	
	From Theorems \ref{thm:isomhaveequiv} and \ref{thm:equiviffisomcores} it follows that there is a hyperelliptic curve $C_1: y^2 = f_1(x)$ which is $\bar{K}$-isomorphic to $C$ and has cluster picture $\Sigma_1$. Applying the formula of Theorem \ref{thm:discriminant}, we find that 
	\begin{equation}
	\label{eqn:disc_by_sigma1}
	v(\Delta_{C_1}) = v(c_1) (4g+2)+ \sum_{v \neq  z_T} \delta_v S(v)(S(v)-1),
	\end{equation}
	where $c_1$ denotes the leading coefficient of $f_1$.
	 We will now modify the cluster picture $\Sigma_1$ in order to find a curve $C_2$ which is isomorphic to $C$ over $K$. 
	
	Let us first consider the case where $z_T \in V(T_C)$. In that case we moreover have that $|\s_v|<g+1$ for all clusters $\s_v \neq \cR$. It follows from Theorem \ref{thm:nearlywmodel} and the uniqueness of the centre $z_T$ that there is a $K$-isomorphic curve $C_2: y^2=f_2(x)$ with cluster picture $\Sigma_{C_2} = \Sigma_1$ and $v(c_2) = 0$, where $c_2$ is the leading coefficient of $f_2$. This completes the first case.
	
	Now consider the case $z_T\notin V(T_C)$. Then $\cR = \s_1 \sqcup \s_2$, where $|\s_1|=|\s_2|=g+1$. In this case, it might be necessary to redistribute depth between the clusters $\s_1$ and $\s_2$, see Definition \ref{def:equivclusterpics}. However this does not change the valuation of the discriminant since the two clusters have equal size. Hence we may still use Equation \ref{eqn:disc_by_sigma1}.
	If the two clusters $\s_1$ and $\s_2$ are not permuted by Frobenius, let $\Sigma_2$ be the cluster picture obtained by redistributing all depth from $\s_1$ to $\s_2$ (or vice versa). It follows from \ref{thm:nearlywmodel} that there is a $K$-isomorphic curve $C_2$ with this cluster picture and $v(c_2)=0$.
	
	In the other case, where the two clusters $\s_1,\s_2$ are permuted by Frobenius, we know that there exists a curve $C_2$ which is isomorphic to $C$ with $v(c_2)\in\{0,1\}$ and $\Sigma_{C_2} = \Sigma_2$, where $\Sigma_2$ is obtained from $\Sigma_1$ by shifting depth $m \in \QQ$ from $\s_1$ to $\s_2$. It remains to compute $v(c_2)$. For that purpose denote by $\delta_1 = \delta_{\s_1}-m$ and $\delta_2 = \delta_{\s_2}+m$ the new relative depths of the clusters $\s_1$ and $\s_2$. It follows from the semistability criterion (Theorem~\ref{semistability criterion subsection}) that $v(c_2) \equiv \delta_1(g+1) \equiv \delta_2(g+1)\pmod 2$. If $g$ is odd, this implies $v(c_2)=0$. On the other hand if $g$ is even, we may assume that $\delta_1 =\delta_2$ (see Theorem \ref{thm:nearlywmodel}). Hence $v(c_2)=1$ if and only if $\delta_{\s_i}(g+1)$ is odd. 
	
	In all cases, we have seen that there is a $K$-isomorphic curve for which $v(c_2)=E$ and the valuation of the discriminant is given by the formula in the theorem. By Theorem \ref{thm:minimal_discriminant}, this is indeed the valuation of the minimal discriminant.
\end{proof}

\begin{remark} \label{rem:canonical_clusterpic_for_BY}
	The cluster picture $\Sigma_1$ constructed in the proof presents a canonical representative for the equivalence class of the cluster picture associated to $C$ (see Definition \ref{def:equivclusterpics}).
\end{remark}

\begin{remark} \label{rem:openBY_centredBY}
	Instead of working with the centred BY tree $T^*_C$, one could also consider the open BY tree (\cite[Definition 3.21]{hyble}) obtained by gluing an open yellow edge to the centre of $T_C$. The order on the vertices of this tree and the construction of the cluster picture $\Sigma_1$ described in the proof of the theorem then coincide exactly with the definitions in Construction 4.15 in \cite{hyble}.
\end{remark}



\end{document}